\edef\Gin@extensions{\Gin@extensions,.mps}
\tikzset{
	column sep/.code=\def\pgfmatrixcolumnsep{\pgf@matrix@xscale*(#1)},
	row sep/.code   =\def\pgfmatrixrowsep{\pgf@matrix@yscale*(#1)},
	matrix xscale/.code=%
	\pgfmathsetmacro\pgf@matrix@xscale{\pgf@matrix@xscale*(#1)},
	matrix yscale/.code=%
	\pgfmathsetmacro\pgf@matrix@yscale{\pgf@matrix@yscale*(#1)},
	matrix scale/.style={/tikz/matrix xscale={#1},/tikz/matrix yscale={#1}}}
\def\pgf@matrix@xscale{1}
\def\pgf@matrix@yscale{1}
\newtheorem{theorem}{Theorem}
\newtheorem{lemma}{Lemma}[section]
\newtheorem{proposition}{Proposition}[section]
\newtheorem{claim}{Claim}[section]
\newtheorem*{claim*}{Claim}
\newtheorem*{theorem*}{Theorem}
\newtheorem*{corollary*}{Corollary}
\theoremstyle{definition}
\newtheorem{definition}{Definition}[section]
\newtheorem{example}{Example}
\newtheorem{notation}{Notation}
\algnewcommand\algorithmicinput{\textbf{Input:}}
\algnewcommand\INPUT{\item[\algorithmicinput]}
\algnewcommand\algorithmicoutput{\textbf{Output:}}
\algnewcommand\OUTPUT{\item[\algorithmicoutput]}
\algnewcommand\algorithmicproc{\textbf{Procedure:}}
\algnewcommand\PROCEDURE{\item[\algorithmicproc]}
\algnewcommand\algorithmiccomplexity{\textbf{Complexity:}}
\algnewcommand\COMPLEXITY{\item[\algorithmiccomplexity]}
\newlength{\continueindent}
\newcommand*{\ALG@customparshape}{\parshape 2 \leftmargin \linewidth \dimexpr\ALG@tlm+\continueindent\relax \dimexpr\linewidth+\leftmargin-\ALG@tlm-\continueindent\relax}
\apptocmd{\ALG@beginblock}{\ALG@customparshape}{}{\errmessage{failed to patch}}
\theoremstyle{remark}
\newtheorem{remark}{Remark}
\theoremstyle{observation}
\definecolor{DarkBlue}{rgb}{0,0.1,0.55}
\numberwithin{equation}{section}
\newcommand {\hide}[1]{}
\newcommand {\sign} {\mbox{\bf sign}}
\newcommand {\junk}[1]{}
\newcommand {\R} {\mathrm{R}}
\newcommand {\D}     {\mbox{\rm D}}
\newcommand {\Sphere}{\mbox{${\bf S}$}}     
\newcommand {\Z}  {\mathbb{Z}}
\newcommand {\Q}         {\mathbb{Q}}
\newcommand {\ZZ} {{\rm Z}}
\newcommand {\RR} {{\mathcal R}}
\newcommand {\Der} {{\rm Der}}
\newcommand {\la}   {{\langle}}
\newcommand {\ra}   {{\rangle}}
\newcommand {\eps} {{\varepsilon}}
\newcommand {\E} {{\rm ext}}
\newcommand {\Sign}      {\mbox{\rm Sign}}
\newcommand{\card}{\mathrm{card}}
\newcommand{\rank}{\mathrm{rank}}
\def\addots{\mathinner{\mkern1mu
		\raise1pt\vbox{\kern7pt\hbox{.}}
		\mkern2mu\raise4pt\hbox{.}\mkern2mu
		\raise7pt\hbox{.}\mkern1mu}}
\newcommand{\HH}  {\mbox{\rm H}}
\newcommand{\x}{\mathbf{x}}
\newcommand{\X}{\mathbf{X}}
\newcommand{\Top}{\mathbf{Top}}
\newcommand{\Simp}{\mathbf{Simp}}
\newcommand{\crit}{\mathrm{crit}}
\newcommand{\Crit}{\mathrm{Crit}}
\begin{document}
	\title[Persistent homology of semi-algebraic sets]
	{
		Persistent homology of semi-algebraic sets
	}
	\author{Saugata Basu}
	\address{Department of Mathematics,
		Purdue University, West Lafayette, IN 47906, U.S.A.}
	\email{sbasu@math.purdue.edu}
	
	\author{Negin Karisani}
	\address{Department of Computer Science, 
		Purdue University, West Lafayette, IN 47906, U.S.A.}
	\email{nkarisan@cs.purdue.edu}

	\subjclass{Primary 14F25, 55N31; Secondary 68W30}
	\date{\textbf{\today}}
	\keywords{semi-algebraic sets, simplicial complex, persistent homology, barcodes}
	\thanks{
		Basu was  partially supported by NSF grants
		CCF-1618918, DMS-1620271 and CCF-1910441.}
	\begin{abstract}
		We give an  algorithm with singly
		exponential complexity for computing 
		the \emph{barcodes} up to dimension $\ell$ (for any fixed $\ell \geq 0$)
		of the filtration of a given semi-algebraic set by the sub-level sets of a given polynomial.
		Our algorithm is  the first algorithm for this problem with singly exponential complexity, and generalizes the corresponding results
		for computing the Betti numbers up to dimension $\ell$ 
		of semi-algebraic sets with no filtration present.  
	\end{abstract}
	\maketitle
	
	\tableofcontents
	
	\section{Introduction}
	\subsection{Background}
	Let $\R$ be a real closed field and $\D$ an ordered domain contained in $\R$ which we fix for the rest of the paper.
	The algorithmic problem of computing the ranks of the homology groups of a given semi-algebraic set described by a quantifier-free formula, whose atoms are of the form $P > 0, P \in \D[X_1,\ldots,X_k]$, as input has attracted a lot of attention over the years. 
	\footnote{Here and everywhere else in the paper all homology groups considered are with rational coefficients.}
	Closed and bounded semi-algebraic subsets $S \subset \R^k$ are semi-algebraically triangulable -- and moreover given a description of the semi-algebraic set by a quantifier-free formula, such a triangulation can be effectively computed with complexity 
	(measured in terms of the number of polynomials appearing in the description and their degrees) which is doubly exponential in $k$. 
	Together with standard algorithms of linear algebra, this gives an algorithm for computing all the Betti numbers of a given semi-algebraic set with doubly exponential complexity. 
	
	Classical bounds coming from Morse theory \cite{OP,T, Milnor2, B99,GV07} gives singly exponential bounds on the Betti numbers of semi-algebraic sets. More precisely, if $S \subset \R^k$ is a semi-algebraic set defined by a quantifier-free formula involving 
	$s$ polynomials of degree at most $d$, then the sum of the 
	Betti numbers of $S$ is bounded by $(O(sd))^k$. 
	Additionally, it is known that the problem of computing the zero-th Betti number of $S$ (i.e. the number of semi-algebraically components of $S$)  (using ``roadmap'' algorithms \cite{Canny93a,GV92,BPR99}), as well as the 
	problem of computing the Euler-Poincar\'e characteristic of $S$
	(using Morse theory \cite{B99,BPR-euler-poincare}),
	both admit single exponential complexity algorithms. This led to a search for singly exponential complexity algorithm for computing the higher Betti numbers as well. The current state of the art is that
	there exists singly exponential algorithm for computing the first 
	$\ell$ Betti numbers of semi-algebraic sets for each fixed $\ell \geq 0$ \cite{Bas05-first,BPRbettione}.
	
	In this paper we study the algorithmic complexity of computing a finer 
	topological invariant of a given semi-algebraic set $S$ than its Betti numbers -- namely, \emph{the barcode of a filtration} of the  given semi-algebraic set.  
	This finer invariant (unlike the Betti numbers) has both a discrete as well as a continuous part and is attached to a  filtration of a semi-algebraic set by the sub-level sets of a semi-algebraic function. The problem reduces to the problem of 
	computing the Betti numbers in the case when the given filtration is trivial.

\subsection{Persistent Homology}
	One of the recent developments in the area of applied topology is the introduction of the notion
	of \emph{persistent homology of filtrations}.
	We initiate in this paper the study of the algorithmic problem of computing the \emph{persistent homology groups} (cf. Definition~\ref{def:persistent}) 
	of filtrations of semi-algebraic sets by polynomial functions.
	Persistent homology is a central object in the emerging field of topological data analysis
	\cite{dey2022computational, Weinberger_survey, Ghrist}, but has also found applications
	in diverse areas of mathematics and computations as well (see for example \cite{Ellis-King,Manin-Marcolli}). 
	
	One can associate persistent homology
	groups to any filtration of topological spaces, and they generalize ordinary homology groups of a space $X$ -- which
	corresponds to the trivial (i.e. constant) filtration on $X$.
	To the best of our knowledge the algorithmic problem of computing persistent homology groups of semi-algebraic sets
	equipped with a filtration by the sub-level sets of a polynomial (or more generally continuous semi-algebraic functions) have not been considered from an algorithmic viewpoint. 
	The output of a persistent homology computation is usually expressed in the form of ``barcodes'' \cite{Ghrist}.	

	We will define barcodes for semi-algebraic filtrations precisely later (see Definition~\ref{def:barcode_multiplicity}). A basic example which is a starting point of persistent homology theory in the field of topological data analysis is the following one.

\subsubsection{\v{C}ech complex of a finite set of points}	
	 Let $X$ be a (finite) subset of $\R^k$ (with its Euclidean metric).  In practice, $X$ may consist of a finite set of points  (often  called ``point-cloud data'')
which approximates some subspace or sub-manifold $M$ of $\R^k$. The topology (in particular, the homology groups) of the manifold $M$ is not reflected in the set
of points $X$ (which is a discrete topological space under the subspace topology induced from that of $\R^k$). 
Now for $r \geq 0$, let $X_r$ denote the union of closed Euclidean balls,
$\overline{B_k(x,r)}$, of radius $r$ centered at the points $x \in X$. 
Notice that each $X_r$ is a semi-algebraic set indexed by $r$.
In particular, $X_0 = X$.
Also, for $0 \leq r \leq r'$, we have that $X_r \subset X_{r'}$. Thus,
$(X_r)_{r \geq 0}$ is an increasing family of semi-algebraic sets
indexed by $r \geq 0$. Thus, this is an example of a semi-algebraic filtration (see Remark~\ref{cech-example}). The main rationale for considering this filtration is that
nerve complex of the family of convex sets $\overline{B_k(x,r)}, x \in X$
approximates homotopically the underlying manifold $M$, and
each homology class of $M$ would show up in the homology of $X_r$ for 
some values of $r$. The barcode of the filtration $(X_r)_{r \geq 0}$ is a tool for filtering out spurious homology (noise) from
that which genuinely reflects the topology of $M$ (see \cite{Ghrist,Edelsbrunner_survey}).
The barcode of the above filtration thus plays 
an important role in
topological data analysis. In particular they capture information about the homology of the underlying manifold $M$.
It also serves as a ``signature'' for topological data
	(such as point cloud data). In the semi-algebraic world they play a similar role -- for example, as a measure of topological similarity
	of two given semi-algebraic sets which is much finer (because of the 
	presence of the continuous parameters) to just the sequence of Betti numbers.
    
    As stated earlier, the main goal in this paper is to design an efficient (singly exponential complexity algorithm) that takes as input a quantifier-free formula
	describing a closed semi-algebraic set $S \subset \R^k$ 
	as well as a polynomial $P \in \R[X_1,\ldots,X_k]$, and
	outputs the  barcodes 
	up to dimension $\ell$ for some fixed $\ell \geq 0$ 
	of the filtration of $S$ by the sub-level sets of the function $P$ on $S$, thereby generalizing the algorithm in \cite{Bas05-first} for computing the first $\ell$ Betti numbers with a similar complexity.
	There are several intermediate steps needed to achieve this goal.
	These intermediate steps have been used recently in other applications (that we mention in Section~\ref{subsec:contributions} below)
	and hence could be of independent interest. We outline them below.
	
\subsection{Summary of the main contributions}
\label{subsec:contributions}
	We summarize the  main contributions of the paper as follows.
	\begin{enumerate}[1.]
	    \item We reformulate the definition of barcodes 
	     in order to treat continuous as well as finite filtrations in a uniform manner. This is important in the current application since we consider filtrations of semi-algebraic sets by polynomial functions which are by nature examples of continuous filtrations (since they are indexed by $\R$). 
	    However, we show that the barcode of this continuous filtration is equal  to another finite one (see Propositions~\ref{prop:finite} and \ref{prop:G2}). In order for such an equality to make sense it is important that persistent homology of a filtration should be defined in a uniform way for arbitrary ordered index set. It is possible to have a completely categorical description of persistent homology which applies to very general filtration \cite{Bubenik-Scott}. We avoid categorical language and give an \emph{elementary definition of barcodes}  directly in terms of sub-quotients of homology groups 
	    (see Definition~\ref{def:barcode_multiplicity}). 
	    We remark here that the
basic theory of persistent homology with real parameters
in the multi-persistence setting was developed independently (in different ways) in \cite{Kashiwara2018}
and in \cite{miller2020essential,miller2020homological}. However,
we prefer to give a self-contained description which applies directly to the
the one-dimensional semi-algebraic setting and is suitable from our algorithmic view-point.

\hide{	    
	    which we believe could be useful in other applications as well.  
	    For example, the definition given in our paper is used in a crucial way in \cite{Basu-Cox} to define \emph{harmonic barcodes}  (which carry more information than the classical barcodes). In this application it is very important to identify the persistent homology spaces with certain subspaces
	    of the chain spaces of the ambient simplicial complex. This is possible using our definitions. 
}
	    \item
	    We give a definition
	    of barcodes for  semi-algebraic maps which are not necessarily proper (Definition~\ref{def:tilde-B}) generalizing the one for proper maps -- and we believe that this could form the basis of generalizing the results of the current papers to arbitrary semi-algebraic sets and maps. Similar ideas appear in 
	    \cite[Examples 15.11 and 15.14]{miller2017draft}, but our 
	    definition is adapted towards applications in real algebraic geometry.
	    
	    \item
	    By an application of a standard theorem in real algebraic geometry
	    (Hardt triviality theorem \cite{Hardt}) we can deduce that the topological  type  of the sub-level sets of a filtration of a semi-algebraic set  by a semi-algebraic function changes at only finitely many values of the function. This implies that the barcode of the original filtration is equal to that of a finite filtration (after proper definition of  barcodes encompassing both the finite and the continuous case as mentioned earlier). However, an algorithm based on Hardt triviality theorem would inevitably lead to a doubly exponential sized filtration -- since the proof of this theorem (see for example proof of \cite[Theorem 5.46]{BPRbook2}) depends on taking semi-algebraic triangulations for which only a doubly exponential complexity algorithm is known to exist. 
	    Another important contribution of the current paper is an algorithm with singly exponential complexity 
	    (see Algorithm~\ref{alg:filtration} below)
	    for reducing a given continuous filtration of a semi-algebraic set by a polynomial to a filtration of simplicial complexes indexed by a \emph{finite} subset of $\R$, such that the barcode of this finite filtration is equal to that of the continuous filtration in dimensions up to $\ell$. 
	    The two main ingredients for this algorithms are:
	    \begin{enumerate}[(a)]
	    \item
	    mathematical techniques introduced in \cite{BV06} for bounding the number of homotopy types of fibers of a semi-algebraic map;
	    \item
	    a recent algorithm for efficiently computing simplicial replacements of semi-algebraic sets \cite[Theorem 1]{basu-karisani}. 
	    \end{enumerate}
	    We note that Algorithm~\ref{alg:filtration} has other applications as well.
	    For example,  
	    it plays a key role in a recent work on computing a homology basis of the first  homology group of a given semi-algebraic set with singly exponential complexity \cite{Basu-Percival}.
	    
	    \item The last (and perhaps the most important) contribution is an
	    algorithm with a singly exponential complexity that computes the 
	    barcodes of a semi-algebraic filtration up to dimension $\ell$ for any fixed $\ell \geq 0$. After having reduced to the case of finite semi-algebraic filtration using Algorithm~\ref{alg:filtration},
	    we then 
	compute the barcode of this finite filtration of finite simplicial complexes (cf. Algorithms~\ref{alg:barcode-simplicial} and \ref{alg:barcode-semi-algebraic}) using Definition~\ref{def:barcode_multiplicity} and standard algorithms from linear algebra.\\
	
We remark that it is plausible that after ensuring the finiteness of the 
filtration, the last step of computing the barcode could be achieved by an appropriate extension of the algorithm for computing the first few Betti numbers of semi-algebraic sets described in \cite{Bas05-first}. However, this extension would be non-trivial and we prefer to use directly
Algorithm 3 in \cite{basu-karisani} for which no extension is needed.	\\

	    We prove the following theorem stated informally below. The formal statement appears later in the paper. 
	    
	\begin{theorem*}[cf. Theorem~\ref{thm:persistent}]
		There exists an algorithm(Algorithm~\ref{alg:barcode-semi-algebraic}) that takes as input
		a description of a closed and bounded semi-algebraic set $S \subset \R^k$, and a polynomial
		$P \in \R[X_1,\ldots,X_k]$, and outputs the ``barcodes''  (cf. Definition~\ref{def:barcode_multiplicity} below)
		in dimensions $0$ to $\ell$
		of the filtration of $S$ by the sub-level sets of the polynomial $P$. The complexity of
		this algorithm is bounded singly exponentially in $k$ (as a function of the number and degrees of polynomials appearing in the description of $S$).
	\end{theorem*}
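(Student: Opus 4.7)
The plan is to prove the theorem by constructing Algorithm~\ref{alg:barcode-semi-algebraic} as the concatenation of two sub-algorithms of singly exponential complexity. The first, Algorithm~\ref{alg:filtration}, replaces the given continuous filtration $(S_{\leq t})_{t \in \R}$, where $S_{\leq t} = S \cap \{P \leq t\}$, by a finite filtration of finite simplicial complexes indexed by a finite subset of $\R$, whose barcode agrees with that of the original filtration in every dimension from $0$ to $\ell$. The second, Algorithm~\ref{alg:barcode-simplicial}, reads off the barcode of any such finite filtration of simplicial complexes directly from Definition~\ref{def:barcode_multiplicity} using elementary linear algebra.

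The first reduction is the heart of the matter. Conceptually it is justified by Hardt triviality: the topological type of the sub-level sets of $P$ on $S$ changes at only finitely many values of $P$, so the barcode of the continuous filtration equals that of a finite sub-filtration (this is the content of Propositions~\ref{prop:finite} and~\ref{prop:G2}). However, a naive algorithm that extracts these critical values by constructing a semi-algebraic triangulation of $S$ would incur doubly exponential cost. To stay within singly exponential complexity I would combine two ingredients: the parametric technique of \cite{BV06} for enumerating the distinct homotopy types of fibers of a semi-algebraic map (here applied to $P|_S$ together with the inclusions $S_{\leq t} \hookrightarrow S_{\leq t'}$), together with the singly exponential simplicial replacement algorithm \cite[Theorem~1]{basu-karisani}. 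The former produces, within the required complexity budget, a controlled finite list of parameter values and a semi-algebraic witness for each distinct sub-level set along with the inclusion data; the latter turns these witnesses into finite simplicial complexes whose homology matches that of the corresponding sub-level sets up to dimension $\ell$, and does so compatibly with the inclusion maps so that the induced maps on simplicial homology correctly realize the connecting maps in the persistent homology of the original filtration.

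Once Algorithm~\ref{alg:filtration} has produced a finite filtration $K_{t_0} \subseteq K_{t_1} \subseteq \cdots \subseteq K_{t_N}$ of simplicial complexes of singly exponential size, Algorithm~\ref{alg:barcode-simplicial} computes, for each $i \leq \ell$ and each pair $s \leq t$, the sub-quotient of $\HH_i(K_t)$ prescribed by Definition~\ref{def:barcode_multiplicity}. Since each $K_{t_j}$ has singly exponentially many simplices in dimensions up to $\ell+1$, the required kernel, image, and intersection computations over $\Q$ contribute only a polynomial overhead, and combining this with the singly exponential bound on the first sub-algorithm yields the overall complexity claim. The hardest part, and the place where the previously known doubly exponential approach breaks down, is the \emph{coherence} of the simplicial replacement across the whole filtration: one must arrange that the singly exponential construction is carried out not merely on each individual sub-level set but compatibly along every inclusion, so that the diagram induced on homology in dimensions $\leq \ell$ faithfully reproduces the persistent homology of the original continuous filtration. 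This is precisely what the combination of the BV06-style parametric construction and the simplicial replacement machinery of \cite{basu-karisani} is designed to deliver.
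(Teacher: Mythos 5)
Your proposal is correct and follows essentially the same route as the paper: reduce the continuous filtration to a finite one via the BV06-style parametric argument (Propositions~\ref{prop:G} and~\ref{prop:G2}) combined with the simplicial replacement machinery of \cite{basu-karisani} applied to the whole diagram of inclusions (Algorithm~\ref{alg:filtration}), then compute the barcode of the resulting finite simplicial filtration by linear algebra from the multiplicity formula of Proposition~\ref{prop:finite:multiplicity} (Algorithm~\ref{alg:barcode-simplicial}). The emphasis you place on coherence of the replacement across all inclusions is exactly the reason the paper invokes the diagrammatic version of simplicial replacement rather than treating each sub-level set in isolation.
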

The importance of the assumption that the input semi-algebraic subset be
closed and bounded is discussed in Section~\ref{subsubsec:non-proper}.
	\end{enumerate}

\subsection{Definition of complexity}
We will use the following notion of ``complexity''  in this paper. We follow the same definition as used in the book \cite{BPRbook2}. 
  
\begin{definition}[Complexity of algorithms]
\label{def:complexity}
In our algorithms we will usually take as input quantifier-free first order formulas whose terms
are  polynomials with coefficients belonging to an ordered domain $\D$ contained in a real closed field $\R$.
By \emph{complexity of an algorithm}  we will mean the number of arithmetic operations and comparisons in the domain $\D$.
If $\D = \mathbb{R}$, then
the complexity of our algorithm will agree with the  Blum-Shub-Smale notion of real number complexity \cite{BSS}.
In case, $\D = \Z$, then we are able to deduce the bit-complexity of our algorithms in terms of the bit-sizes of the coefficients
of the input polynomials, and this will agree with the classical (Turing) notion of complexity.
\end{definition}

\subsection{Prior and Related Work}
    As mentioned earlier,
    designing algorithms with singly exponential complexity for computing topological invariants of semi-algebraic sets has been at the center of research in algorithmic semi-algebraic geometry over the past decades. 
We refer the reader to the survey \cite{Basu-survey} 
for a history of these developments and contributions of many authors.
These algorithms are exact algorithms and work for all inputs. The complexity of an algorithm (see Definition~\ref{def:complexity})
is measured in terms of the number of arithmetic operations
in the ring $\D$ (and also in terms of the bit sizes if $\D = \Z$).
    
More recently, algorithms for computing Betti numbers of semi-algebraic sets have also been 
developed in other (more numerical) models of computations \cite{ BCL2019,BCT2020.1, BCT2020.2}.
In these papers the authors take a different approach. Working over $\mathbb{R}$, and given a 
well-conditioned semi-algebraic subset $S\subset \mathbb{R}^k$,  
they compute a witness complex whose geometric realization is $k$-equivalent to $S$. The size of this witness
complex is bounded singly exponentially in $k$. However, the complexity depends on the condition number of the input 
 (and so this bound is not uniform), and the algorithm will fail for ill-conditioned input when the condition number becomes
 infinite. This is unlike the kind of algorithms we consider in the current paper, which are supposed to work for all inputs
 and with uniform complexity upper bounds. 
 So these approaches are not comparable. 
 However, to the best of our knowledge there has not been any attempt to extend the
 numerical algorithms mentioned above for computing Betti numbers to computing
 persistent homology of semi-algebraic filtrations. \\

	The rest of the paper is organized as follows.
	In Section~\ref{sec:precise}, we give the precise statements of the main result after introducing the necessary definitions. 
	In Section~\ref{sec:results}, 
	we prove the key proposition (Proposition~\ref{prop:G2}) which allows us to efficiently reduce to the case of finite filtrations starting with a continuous one.
	In Section~\ref{sec:finite-algorithm},
	after introducing certain necessary preliminaries,
	we describe our algorithm for computing barcodes of semi-algebraic filtrations and analyze its complexity (thereby proving Theorem~\ref{thm:persistent}).
	Finally, in Section~\ref{sec:conclusion} we state some open questions and directions for future work in this area.

	\section{Precise definitions and statements of the main results}
	\label{sec:precise}
	In this section, 
	we define precisely persistent homology and
	barcodes of filtrations in Section~\ref{subsec:persistent}.
	Then in Section~\ref{subsec:filtration-sa} we define semi-algebraic filtrations and 
	state the main algorithmic result of the paper (Theorem~\ref{thm:persistent}).

	\subsection{Persistent homology and barcodes}
	\label{subsec:persistent}
	Let $T$ be an ordered set, and
	$\mathcal{F} = (X_t)_{t \in T}$, a tuple of subspaces of $X$, such that
	$s \leq t \Rightarrow X_s \subset X_t$. We call $\mathcal{F}$ a filtration
	of the  topological space $X$.
	
	We now recall the definition of the \emph{persistent homology
		groups}  associated to a filtration
	\cite{Edelsbrunner_survey, Weinberger_survey}.
 Since we only consider homology groups with rational coefficients, all homology groups in what follows are finite dimensional $\Q$-vector spaces.
	
	\begin{notation}
		\label{not:inclusion}
		For $s,t \in T, s \leq t$, and $p \geq 0$, we let $i_p^{s, t} : \HH_p (X_s) \longrightarrow
		\HH_p (X_t)$, denote the homomorphism induced by the inclusion $X_s
		\hookrightarrow X_t$.
	\end{notation}
	
	
	\begin{definition}
		\label{def:persistent}
		{\cite{Edelsbrunner_survey}} For each triple $(p, s, t)  \in \Z_{\geq 0} \times T \times T$ with 
		$
		s \leq t 
		$ 
		the   
		\emph{persistent homology  group}, $\HH_p^{s, t} (\mathcal{F})$ is defined by
		\begin{eqnarray*}
			\HH_p^{s, t} (\mathcal{F}) & = &  \mathrm{Im} (i_p^{s, t}).
		\end{eqnarray*}
		Note that $\HH_p^{s, t} (\mathcal{F}) \subset \HH_p (X_t)$, and $\HH_p^{s, s}
		(\mathcal{F}) = H_p (X_s)$.
	\end{definition}
	
	\begin{notation}
		We denote by $b_p^{s, t} (\mathcal{F}) = \dim_\Q (\HH_p^{s, t}
		(\mathcal{F}))$.
	\end{notation}

	Persistent homology measures how long a homology class persists in the filtration, in other words considering the homology classes as topological features, it gives an insight about the time  (thinking of the indexing set $T$ of the filtration as time) that a topological feature appears (or is born) and the time it disappears (or dies). This is made precise  as follows.  
	\begin{definition}
		For $s \leq t \in T$, and $ p \geq 0$,
		\begin{itemize}
			\item  we say that a homology class $\gamma \in \HH_p(X_{s})$ is \emph{born} at time $s$, if $\gamma \notin \HH_{p}^{s',s}(\mathcal{F})$, for any $s' < s$;
			\item for a class $\gamma \in \HH_p(X_{s})$ born at time $s$, we say that $\gamma$ \emph{dies} at time $t$,
			\begin{itemize}
				\item if  $i_p^{s,t'}(\gamma) \notin \HH_{p}^{s',t'}(\mathcal{F})$ for all $s', t'$ such that $s' < s \leq t' < t$,
				\item but $i_p^{s,t}(\gamma) \in \HH_{p}^{s'',t}(\mathcal{F})$, for some $s'' < s$.
			\end{itemize} 
		\end{itemize}  	
	\end{definition}
	
	\begin{remark}
		\label{rem:subspaces}
		Note that the homology 
		classes that are born at time $s$, and those that are born at time $s$ and dies at time $t$, 
		as defined above are not subspaces of $\HH_p(X_s)$. In order to be able to associate a ``multiplicity''
		to the set of homology classes which are born at time $s$ and dies at time $t$ we interpret them as classes in 
		certain \emph{subquotients}  of $\HH_*(X_s)$ in what follows.
	\end{remark}
	
	First observe that it follows from Definition~\ref{def:persistent} that for all $s' \leq s \leq t$ and $p \geq 0$, 
	$\HH_p^{s',t}(\mathcal{F})$ is a subspace of $\HH^{s,t}_p(\mathcal{F})$, and both are subspaces of
	$\HH_p(X_t)$. This is because the homomorphism $i^{s',t}_p = i^{s,t}_p\circ i^{s',s}_p$, and so the image of 
	$i^{s',t}_p$ is contained in the image of $i^{s,t}_p$.
	It follows that, for $s \leq t$,  the union of $\bigcup_{s' < s} \HH^{s',t}_p(\mathcal{F})$ is an increasing union of subspaces, and is 
	itself a subspace of $\HH_p(X_t)$.
	In particular, setting $t=s$,  $\bigcup_{s' < s} \HH^{s',s}(\mathcal{F})$ is a subspace of $\HH_p(X_s)$. \\
	
	With the same notation as  above:
	
	\begin{definition}[Subspaces of $\HH_p(X_s)$]
		\label{def:barcode_subspace}
		For $s \leq t$, and $p \geq 0$, we define
		\begin{eqnarray*}
			M^{s,t}_p(\mathcal{F}) &=& \bigcup_{s' < s} (i^{s,t}_p)^{-1}(\HH^{s',t}_p(\mathcal{F})), \\
			N^{s,t}_p(\mathcal{F}) &=& \bigcup_{s' < s\leq t' < t} (i^{s,t'}_p)^{-1}(\HH^{s',t'}_p(\mathcal{F})), \\
		\end{eqnarray*}
	\end{definition}

	\begin{remark}
		\label{rem:barcode_subspace_mean}
		The ``meaning''  of these subspaces are as follows.
		\begin{enumerate}[(a)] \label{rem:meaning} 
			\item
			For every fixed $s \in T$,  $M^{s,t}_p(\mathcal{F})$  is  a subspace of  $\HH_p(X_s)$ consisting of homology classes in $\HH_p(X_s)$ which are
			  \begin{center}
				``born before time $s$, or born at time $s$ and dies at $t$ or earlier'' 
			\end{center}
			
			\item
			Similarly,
			for every fixed $s \in T$,  $N^{s,t}_p(\mathcal{F})$  is  a subspace of $\HH_p(X_s)$ consisting of homology classes in  $\HH_p(X_s)$ which are
			\begin{center}
				``born before time $s$, or born at time $s$ and dies strictly earlier than $t$''
			\end{center}
		\end{enumerate}
		
		The dimensions of $M^{s,t}_p(\mathcal{F})$ and $N^{s,t}_p(\mathcal{F})$ are given in Eqn. \eqref{eq:dim_M} and \eqref{eq:dim_N} in Proposition~\ref{prop:finite:multiplicity} below.  
	\end{remark}
	
	We now define certain \emph{subquotients} of the homology groups of $\HH_p(X_s), s \in T, p \geq 0$,
	in terms of the subspaces defined above in Definition~\ref{def:barcode_subspace}. 
	
	\begin{definition}[Subquotients associated to a filtration]
		\label{def:barcode_subquotients}
		For $s \leq t$, and $p \geq 0$, we define
		\begin{eqnarray*}
			P^{s,t}_p(\mathcal{F}) &=& M^{s,t}_p(\mathcal{F})/N^{s,t}_p(\mathcal{F}), \\
			P^{s,\infty}_p(\mathcal{F}) &=&  \HH_p(X_s) / \bigcup_{s \leq t} M^{s,t}_p(\mathcal{F}).
		\end{eqnarray*}
		
		We will call 
		\begin{enumerate}[(a)]
			\item
			$P^{s,t}_p(\mathcal{F})$ the \emph{space of $p$-dimensional cycles  born at time $s$ and which dies at time $t$};
			and 
			\item
			$P^{s,\infty}_p(\mathcal{F})$ the \emph{space of $p$-dimensional cycles born at time $s$ and which never die}.
		\end{enumerate}
	\end{definition}
	
	\begin{remark}\label{rem:inclusion}
		Notice that 
		$M^{s,t}_p(\mathcal{F}) \subset M^{s,t'}_p(\mathcal{F})$ for $t \leq t'$, and hence
		$\bigcup_{s \leq t} M^{s,t}_p(\mathcal{F})$ is a subspace of $\HH_p(X_s)$,
		and  $N^{s,t}_p(\mathcal{F})$ is a subspace of 
		$M^{s,t}_p(\mathcal{F})$.
		Therefore, these subquotients are vector spaces and  
		have well defined dimensions. 
	\end{remark}

	Finally, we are able to achieve our goal of defining the multiplicity of a bar as the dimension of an associated vector space and define the barcode of a filtration.
	
	\begin{definition}[Persistent multiplicity, barcode]
		\label{def:barcode_multiplicity}
		We will denote for $s \in T,  t \in T \cup \{\infty \}$,
		\begin{equation}
			\label{eqn:def:barcode:multiplicity}
			\mu^{s,t}_p(\mathcal{F}) = \dim P^{s,t}_p(\mathcal{F}),
		\end{equation}
		and call $\mu^{s,t}_p(\mathcal{F})$ the \emph{persistent multiplicity of $p$-dimensional cycles born at time $s$ and dying at time $t$ if $t \neq \infty$,   or never dying in case $t = \infty$}.

		Finally, we will call the set
		\begin{equation}
		\label{eqn:def:barcode}
		\mathcal{B}_p(\mathcal{F}) = \{(s,t,\mu^{s,t}_p(\mathcal{F})) \mid \mu^{s,t}_p(\mathcal{F}) > 0\}
		\end{equation}
		\emph{the $p$-dimensional barcode associated to the filtration $\mathcal{F}$}. 
		
		We will call an element
		$b = (s,t,\mu^{s,t}_p(\mathcal{F})) \in \mathcal{B}_p(\mathcal{F})$ a \emph{bar of $\mathcal{F}$ of multiplicity
			$\mu^{s,t}_p(\mathcal{F}$}). 
	\end{definition}
	
	\begin{remark}
		\label{rem:barcode}
		We remark that the definition of multiplicity given up appears in an abstract setting in \cite[Corollary 7.3]{Crawley-Boevey2015}.
		Note also that the notion of \emph{persistent multiplicity} has been defined previously in 
		the context of finite filtrations (see \cite{Edelsbrunner-Harer}). The definition of
		$\mu^{s,t}_p(\mathcal{F})$ given in Eqn.~\eqref{eqn:def:barcode:multiplicity} generalizes
		that given in \emph{loc.cit.} in the  case of finite filtrations, who defined it
		using Eqn.~\eqref{eqn:prop:finite:multiplicity} in Proposition~\ref{prop:finite:multiplicity}
		stated below. Our definition gives a geometric meaning to this number as a dimension
		of a certain vector space (a subquotient of $\HH_p(X_s)$), and we prove that it agrees with that given in
		\emph{loc.cit.} in Proposition~\ref{prop:finite:multiplicity}.
		Also, it is important to note for what follows that our definition of a barcode applies uniformly to all filtrations with index coming from an ordered set, and we make no additional assumption on the
		indexing set.
	\end{remark}
	
	\begin{remark}[Continuous vs finite filtrations]
		\label{rem:continuous}
		In most applications the filtration $\mathcal{F}$ is assumed to be finite (i.e. the ordered set $T$ is finite). Since we are considering filtration of semi-algebraic sets by the sub-level  sets of a polynomial
		function, our filtration is indexed by $\R$ and is an example of a continuous (infinite) filtration.
		Nevertheless, we will reduce to the finite filtration case by proving that the barcode of the given
		filtration 
		is equal to that of a finite filtration. A general theory encompassing both finite and 
		infinite filtrations using a categorical view-point has been developed (see \cite{Bubenik-Scott, Oudot}). We avoid
		using the categorical definitions and the module-theoretic language 
		used in \cite{Oudot}. We will prove directly the equality of the barcodes of the infinite and the 
		corresponding finite filtration(cf. Proposition~\ref{prop:G2}) that is important in designing our algorithm,
		starting from the definition of persistent multiplicities given above.
	\end{remark}
	
	We now give a concrete example of a barcode associated to a 
	(infinite) filtration.
	
	\begin{example}
		\label{eg:torus}
		Let $S$ be the two-dimensional torus 
		(topologically $\Sphere^1 \times \Sphere ^1$)
		embedded in $\mathbb{R}^3$, and $\mathcal{F}$ be
		the filtration of the torus by the sub-level sets of the height function  (depicted in Figure~\ref{Fig:tor}). 
		We denote by $S_{\leq t}$ the subset of the torus having ``height'' $\leq t$.
		
		We consider homology in dimensions  $0$, $1$ and $2$. 
		
		Informally, one observes that a $0$-dimensional homology class is born at time $t_0$ which never dies. There are two $1$-dimensional homology classes, the horizontal loop born at time $t_2$ and the vertical loop born at time $t_4$, which also never die. Lastly, there is a $2$-dimensional homology class born at time $t_5$ which never dies. Since there are no homology classes of the same dimension being born and dying at the same time, multiplicities in all the cases are 1. 
		
		More formally, following Definitions~\ref{def:barcode_subspace}, \ref{def:barcode_subquotients} and \ref{def:barcode_multiplicity}, we obtain:
		
		\begin{enumerate}[({Case p = }1)]
			\setcounter{enumi}{-1}
			\item If $t_0 \leq t < \infty$ then
			(using Definition~\ref{def:barcode_subspace}) 
			\[
			M^{t_0,t}_0(\mathcal{F}) = 0,
			\]
			and  hence
			(using Definitions~\ref{def:barcode_subquotients} and \ref{def:barcode_multiplicity})
			\[
			P^{t_0,t}_0(\mathcal{F}) = 0, \mbox{ and }
			\mu^{t_0, t}_0(\mathcal{F})=0.
			\]
			On the other hand,  
			\[
			P^{t_0,\infty}_0(\mathcal{F}) = \HH_0(S_{\leq t_0}),
			\]
			implying 
			\[
			\mu^{t_0, \infty}_0(\mathcal{F})=1.
			\]
			
			\item  For $t_2 \leq t < \infty$, 
			\[
			M^{t_2,t}_1(\mathcal{F}) = 0,
			\]
			and hence  
			\[P^{t_2,t}_1(\mathcal{F}) = 0, \mbox{ and }
			\mu^{t_2, t}_1(\mathcal{F})=0.
			\]
			Moreover,  
			\[
			P^{t_2,\infty}_1(\mathcal{F}) = \HH_1(S_{\leq t_2}),
			\]
			and therefore, 
			\[
			\mu^{t_2, \infty}_1(\mathcal{F})=1.
			\]
			
			\noindent For $t_4 \leq t < \infty$, 
			\[
			M^{t_4,t}_1(\mathcal{F}) = N^{t_4,t}_1(\mathcal{F}) =  \HH_1(S_{< t_4}),
			\]
			and hence 
			\[
			P^{t_4,t}_1(\mathcal{F}) = 0, \mbox{ and }
			\mu^{t_4, t}_1(\mathcal{F})=0.
			\]
			Moreover,  
			\[
			P^{t_4,\infty}_1(\mathcal{F}) =  \HH_1(S_{\leq t_4}) /  \HH_1(S_{< t_4}),
			\]
			and therefore  
			\[
			\mu^{t_4, \infty}_1(\mathcal{F})=1.
			\]
			
			\item For $t_5 \leq t < \infty$, 
			\[
			M^{t_5,t}_2(\mathcal{F}) = 0,
			\]
			and hence 
			\[
			P^{t_5,t}_2(\mathcal{F}) = 0, \mbox{ and }
			\mu^{t_5, t}_2(\mathcal{F})=0.
			\]
			Moreover, 
			\[
			P^{t_5,\infty}_2(\mathcal{F}) =\HH_2(S),
			\]
			and therefore 
			\[
			\mu^{t_5, \infty}_2(\mathcal{F})=1.
			\]
		\end{enumerate}
		  Therefore the barcodes are as follows 
		  (using Eqn.~\eqref{eqn:def:barcode}).
		
		$$ {\small
			\begin{array}{ccl}
				\mathcal{B}_0(\mathcal{F}) & = & \{(t_0, +\infty, 1)\}, \\
				\mathcal{B}_1(\mathcal{F}) & = & \{(t_2, +\infty, 1), (t_4, +\infty, 1)\},\\
				\mathcal{B}_2(\mathcal{F}) & = &\{(t_5, +\infty, 1)\}.
		\end{array}}
		$$ 
		Figure~\ref{Fig:bar} illustrates the corresponding bars.  Notice that even though the filtration $\mathcal{F}$ is an infinite filtration indexed by $\mathbb{R}$, the barcodes, $\mathcal{B}_p(\mathcal{F})$, are finite.
		
		\begin{figure}[h!]
			\centering
			\subfigure[]{%
				\label{Fig:tor}%
				\includegraphics[height=1.12in]{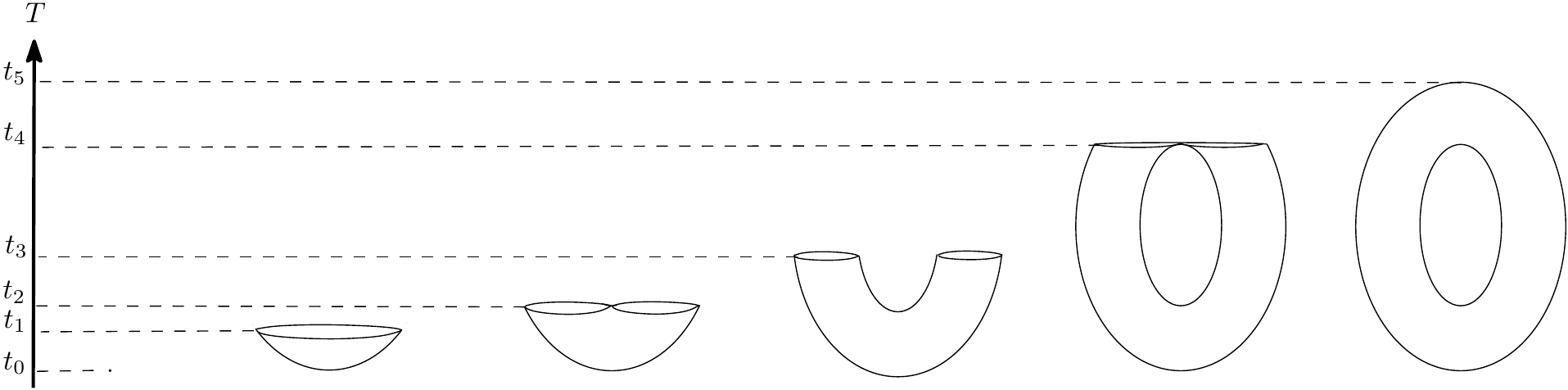}}%
			\qquad
			\subfigure[]{%
				\label{Fig:bar}%
				\includegraphics[height=1in]{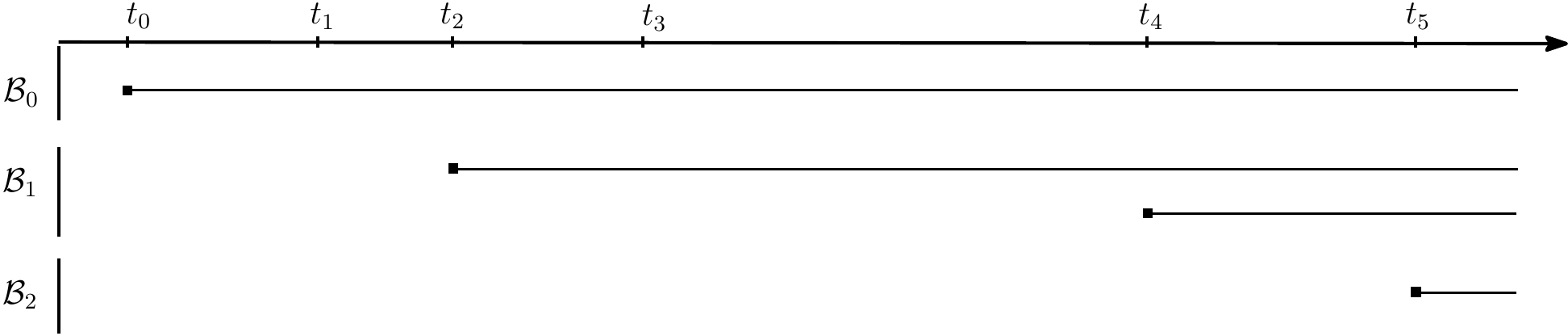}}%
			
			\caption{\small (a) Torus filtered by the sub-level sets of the height function, (b) corresponding barcodes for homology classes of dimension 0, 1 and 2. }
		\end{figure}
	\end{example}
	
	The main type of filtration that we consider in this paper is filtration of semi-algebraic sets by the sub-level sets of continuous semi-algebraic functions -- which we define below.
	
	\subsubsection{$\mathcal{P}$-formulas and $\mathcal{P}$-semi-algebraic sets}
	\begin{notation}[Realizations, $\mathcal{P}$-, $\mathcal{P}$-closed
		semi-algebraic sets]
		\label{not:sign-condition} 
		For any finite set of polynomials $\mathcal{P}
		\subset \R [ X_{1} , \ldots ,X_{k} ]$, 
		we call any quantifier-free first order formula $\phi$ with atoms, $P =0, P < 0, P>0, P \in \mathcal{P}$, to
		be a \emph{$\mathcal{P}$-formula}. 
		Given any semi-algebraic subset $Z \subset \R^k$,
		we call the realization of $\phi$ in $Z$,
		namely the semi-algebraic set
		\begin{eqnarray*}
			\RR(\phi,Z) & := & \{ \mathbf{x} \in Z \mid
			\phi (\mathbf{x})\}
		\end{eqnarray*}
		a \emph{$\mathcal{P}$-semi-algebraic subset of $Z$}.

		We say that a quantifier-free formula $\phi$ is \emph{closed}  
		if it is a formula in disjunctive normal form with no negations, and with atoms of the form $P \geq 0, P \leq 0$ (resp. $P > 0, P < 0$),  
		where $P \in \D[X_1,\ldots,X_k]$. If the set of polynomials appearing in a closed (resp. open) formula
		is contained in a finite set $\mathcal{P}$, we will call such a formula a $\mathcal{P}$-closed 
		formula,
		and we call
		the realization, $\RR \left(\phi \right)$, a \emph{$\mathcal{P}$-closed 
			semi-algebraic set}.
	\end{notation}

	\subsection{Semi-algebraic filtrations}
	\label{subsec:filtration-sa}
	
	We consider the algorithmic problem of computing
	the dimensions of persistent homology groups and barcodes of the filtration induced on a given semi-algebraic set by a polynomial function.  
	
	\begin{definition}
		\label{def:filtration-sa}
		Let $S \subset \R^k$ be a 
		semi-algebraic set and 
		$P:S \rightarrow \R$ a continuous semi-algebraic map.
		
		For $t \in \R \cup \{\pm \infty\}$, let
		\[
		S_{P \leq t} = \{ x \in S \mid P(x) \leq t \}.
		\]
		Then, $(S_{P \leq t})_{t\in \R \cup \{\pm \infty\}}$ is a filtration of the semi-algebraic set $S$ indexed by $\R \cup \{\pm \infty\} $, and we
		will denote this filtration by $\mathcal{F}(S,P)$.
	\end{definition}
	
	\begin{notation}
	  For $p \geq 0$, we will denote  
	  \[
	  \mathcal{B}_p(S,P) = \mathcal{B}_p(\mathcal{F}(S,P)).
	  \]
    \end{notation}

	  \begin{remark}
	  \label{rem:homology}
	  In the definition of $\mathcal{B}_p(\mathcal{F}(S,P))$ we need to specify the homology theory we are using.
	  For a semi-algebraic set $X$ defined over an arbitrary real closed field $\R$ we take homology groups $\HH_*(X) = \HH_*(X,\Q)$ as defined in \cite[(3.6), page 141]{Delfs-Knebusch-Crelle}. It agrees with singular homology in case $\R = \mathbb{R}$.
	  \end{remark}
	
	\begin{remark} \label{cech-example} 
	Note that many filtrations commonly used in computational topology are examples of filtrations of
	semi-algebraic sets by polynomial functions as defined above. One example of this is the well-known \emph{\v{C}ech-complex} \cite{dey2022computational} which can be described as follows.
	
	Let $\{\x^{(1)},\ldots,\x^{(n)}\}$ be a finite set of points in $\R^k$, and let
	$S \subset \R^{k+1}$, be the semi-algebraic set defined by the formula
	\[
	\phi(X_1,\ldots,X_k,T):= \bigvee_{i=1}^n \left(|\X - \x^{(i)}|^2 - T \leq 0\right),
	\]
	where $\X = (X_1,\ldots,X_k)$.
	Let $P = T$. Then, the filtration $\mathcal{F}(S,P)$ is homeomorphic to the filtration obtained by
	taking unions of balls of growing radius centered at  $\{\x^{(1)},\ldots,\x^{(n)} \}$.
	This latter filtration plays a very important role in applications (for example, in analyzing the topological
	structure of point-cloud data).
	\end{remark}
	
	\begin{remark}
	Note also that the barcode of a polynomial function restricted to a 
	semi-algebraic set $S$ gives important
	topological information about the function $P$ on $S$. It allows one to define a $p$-dimensional distance between
	two such polynomial functions restricted to $S$, by defining a notion of distance between two barcodes.
	Various distances have been proposed but the most commonly used one is the so called ``bottle-neck distance'' \cite{Edelsbrunner-Harer}. 
	An  algorithm  with singly exponential complexity for computing the barcode of a polynomial also gives an algorithm with singly exponential complexity  
	for computing such distances as well. To our knowledge the algorithmic problem of computing barcodes of 
	polynomial functions on semi-algebraic sets have not been considered prior to our work.
\end{remark}	
	
	We prove the following theorem.
	
	\begin{theorem}
		\label{thm:persistent}
		There exists an algorithm that takes as input:
		\begin{enumerate}[1.]
			\item a finite set of polynomials, $\mathcal{P} \subset \D[X_1,\ldots,X_k]$;
			\item
			a $\mathcal{P}$-closed formula $\phi$ 
			such that $\RR(\phi)$ is bounded;
			\item
			a polynomial $P \subset \D[X_1,\ldots,X_k]$;
			\item $\ell \geq 0$;
		\end{enumerate}
		and computes $\mathcal{B}_p(\RR(\phi),P)$, for $0 \leq p \leq \ell$.
		The complexity of the algorithm is bounded by $(s d)^{k^{O(\ell)}}$, where $s = \card(\mathcal{P})$,
		and $d$ is the maximum amongst the degrees of $P$ and the polynomials in $\mathcal{P}$.
	\end{theorem}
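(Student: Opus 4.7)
The plan is to follow the two-stage strategy outlined in Section~\ref{subsec:contributions}. First, I would reduce the continuous filtration $\mathcal{F}(\RR(\phi),P)$ to a finite filtration of simplicial complexes (agreeing with it up to dimension $\ell$) with singly exponential complexity, and second, I would compute the barcode of this finite filtration by direct linear algebra, using the subquotient characterization in Definition~\ref{def:barcode_subquotients} and Definition~\ref{def:barcode_multiplicity}.

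For the first stage, Proposition~\ref{prop:G2} asserts that the barcode of the continuous filtration coincides with the barcode of a finite filtration indexed by an appropriate finite subset $T \subset \R$ containing all values at which the topological type of $\RR(\phi)_{P \leq t}$ changes. A crude application of Hardt triviality (via a full semi-algebraic triangulation) would produce such a $T$ but with doubly exponential complexity. I would instead invoke Algorithm~\ref{alg:filtration}, whose two essential ingredients are (a) the infinitesimal-deformation / Morse-theoretic bounds of \cite{BV06} that control the finitely many critical values in singly exponential time, and (b) the simplicial replacement algorithm of \cite[Theorem~1]{basu-karisani}, which, given a $\mathcal{P}$-closed semi-algebraic set, outputs a simplicial complex of size $(sd)^{k^{O(\ell)}}$ whose singular homology agrees with that of the set through dimension $\ell$. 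Applying this replacement to each sub-level set $\RR(\phi)_{P \leq t}$ for $t \in T$, together with the corresponding inclusion-induced simplicial maps, yields a finite filtration $(K_t)_{t \in T}$ of simplicial complexes such that the maps $i^{s,t}_p : \HH_p(K_s) \to \HH_p(K_t)$ are identified with the maps induced by the inclusions $\RR(\phi)_{P\leq s} \hookrightarrow \RR(\phi)_{P\leq t}$ for all $p \leq \ell$.

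For the second stage, given the finite filtration $(K_t)_{t \in T}$, I would apply Algorithm~\ref{alg:barcode-simplicial} (thereby proving Algorithm~\ref{alg:barcode-semi-algebraic} works). Concretely, for each $p \leq \ell$ and each pair $s \leq t$ in $T$, compute $\HH_p(K_s), \HH_p(K_t)$ and the induced map $i^{s,t}_p$ by Smith normal form / Gaussian elimination on the chain complexes; from these compute the subspaces
\[
M^{s,t}_p(\mathcal{F}) = \bigcup_{s'<s}(i^{s,t}_p)^{-1}(\HH^{s',t}_p(\mathcal{F})), \quad N^{s,t}_p(\mathcal{F}) = \bigcup_{s'<s\leq t'<t}(i^{s,t'}_p)^{-1}(\HH^{s',t'}_p(\mathcal{F}))
\]
as ascending unions of subspaces of $\HH_p(K_s)$ (each obtained by preimage and sum operations that are standard in linear algebra), and finally read off the multiplicities $\mu^{s,t}_p(\mathcal{F}) = \dim(M^{s,t}_p/N^{s,t}_p)$ and $\mu^{s,\infty}_p(\mathcal{F})$ directly from Definition~\ref{def:barcode_multiplicity}. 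The set $\{(s,t,\mu^{s,t}_p(\mathcal{F})) : \mu^{s,t}_p(\mathcal{F}) > 0\}$ is then the required barcode $\mathcal{B}_p(\RR(\phi),P)$.

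For the complexity estimate, the output of Algorithm~\ref{alg:filtration} has size $(sd)^{k^{O(\ell)}}$, and $\card(T)$ is bounded by the same quantity; the number of pairs $(s,t)$ and all subsequent linear algebra on vector spaces of dimension at most the simplicial complex size is polynomial in this size. Hence the total complexity remains $(sd)^{k^{O(\ell)}}$. The main obstacle, and what carries all the novel mathematical content, is the first stage: producing a singly exponential-sized simplicial filtration that is homologically faithful through dimension $\ell$ to the continuous semi-algebraic filtration, since the straightforward approach via semi-algebraic triangulation inevitably incurs doubly exponential cost. Once Algorithm~\ref{alg:filtration} is in hand, the second stage is essentially exact linear algebra over $\Q$ and presents no additional difficulty.
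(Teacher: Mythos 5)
Your proposal is correct and follows essentially the same two-stage strategy as the paper (Algorithm~\ref{alg:filtration} for reduction to a finite simplicial filtration, then exact linear algebra over $\Q$). The only genuine deviation is in the second stage: where you propose computing the subspaces $M^{s,t}_p(\mathcal{F})$ and $N^{s,t}_p(\mathcal{F})$ directly from Definition~\ref{def:barcode_subspace} by preimage and union operations, the paper's Algorithm~\ref{alg:barcode-simplicial} instead computes only the persistent Betti numbers $b^{i,j}_p(\mathcal{F})$ and recovers the multiplicities from the closed-form expression of Proposition~\ref{prop:finite:multiplicity}. These are equivalent by that proposition and both fit within the $(sd)^{k^{O(\ell)}}$ bound, so the distinction is implementational rather than mathematical. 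One technicality you gloss over (and which the paper handles explicitly in Algorithm~\ref{alg:barcode-semi-algebraic}) is that $\RR(\phi)$ is only assumed bounded, not contained in a ball of known radius; the paper introduces an infinitesimal $\eps$, intersects with the ball of radius $1/\eps$ over $\R\la\eps\ra$, and passes to $\D[\eps]$, so that the Thom encodings output as endpoints of bars, and the barcode itself, are computed in this extension. This detail is needed for the complexity analysis to go through as stated but does not change the overall architecture of the argument.
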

	
	\subsubsection{Barcodes of non-proper maps}
	\label{subsubsec:non-proper}
	Notice that in Theorem~\ref{thm:persistent} we only consider semi-algebraic sets $S$ which are closed and bounded. In particular, this implies that any continuous semi-algebraic function on $S$ is a proper map $S \rightarrow \R$ (i.e. the inverse image of a closed and bounded semi-algebraic set is closed and bounded). 
	
	One reason to assume the properness is
	that for non-proper semi-algebraic maps $P:S \rightarrow \R$, the barcode $\mathcal{B}_p(S,P)$ may not reflect the topology of $S$ as illustrated in the following example 
	(see also \cite[Examples 15.11 and 15.14]{miller2017draft}).
	
	\begin{example}
	   \label{eg:non-proper}
	   Let $S \subset \R^2$ be the (unbounded) semi-algebraic set defined by the formula
	 \[
	 \phi := (0 < X_1 < 1) \wedge (X_1(X_1 -1)X_2 -1 = 0)
	 \]
	 (depicted in Figure~\ref{fig:diagram}), and let $P = X_1$.
	 Consider the semi-algebraic filtration $\mathcal{F}(S,P)$.
	 Note that $P$ restricted to $S$ is not a proper semi-algebraic map 
	 ($P^{-1}([0,1])$ is not bounded).

        \begin{figure}[h]
		\centering
		\includegraphics[scale=.4]{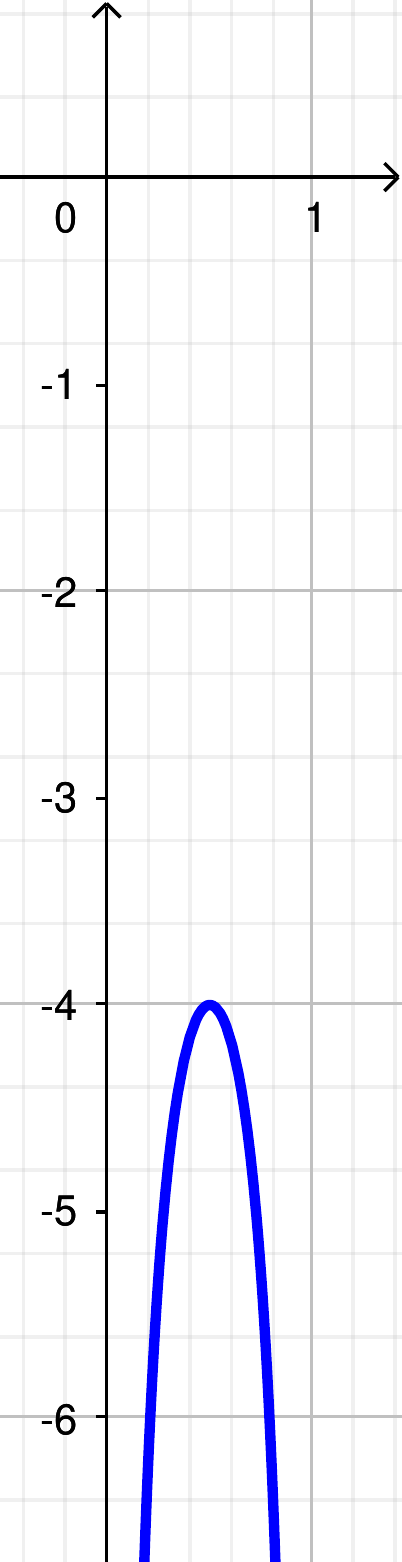}%
		\caption{\small $S = \{ (x_1,x_2) \  | \  0 < x_1 < 1, x_1(x_1-1)x_2 - 1 = 0 \}$}
		\label{fig:diagram}
		\end{figure}
		
	 It is clear that for $p > 0$, 
	 \[
	 \mathcal{B}_p(S,P) = \emptyset.
	 \]
	 We claim that even for $p=0$ (contrary to the expectation)
	 \[
	 \mathcal{B}_p(S,P) = \emptyset.
	 \]
	 To see this observe that 
	 for all $s \leq 0$, $t \geq s$, we have that
	 \begin{eqnarray*}
	 M^{s,t}_0(\mathcal{F}(S,P)) &=& \bigcup_{s' < s} (i^{s,t}_p)^{-1}(\HH^{s',t}_0(\mathcal{S})) \\
	 &=& 0,\\
	 \HH_0(S_{P \leq s}) &=& 0,
	\end{eqnarray*}
	since $S_{P \leq s} = \emptyset$ for $s \leq 0$.
	This shows that 
	\begin{equation}
	\label{eqn:eg:non-proper:1}
	\mu^{s,t}_0(\mathcal{F}(S,P)) = 0,  s \leq 0, t \geq s.   
	\end{equation}

	For $s >0$, and $t \geq s$, it follows from Definition~\ref{def:barcode_multiplicity} that
	\[
	M^{s,t}_0(\mathcal{F}(S,P)) = N^{s,t}_0(\mathcal{F}(S,P)) = \HH_0(S_{P \leq s}),
	\]
	proving that
	\begin{equation}
	\label{eqn:eg:non-proper:2}
	 \mu^{s,t}_0(\mathcal{F}(S,P)) = 0, s > 0, t \geq s.   
	\end{equation}

	Together Eqns. \eqref{eqn:eg:non-proper:1} and \eqref{eqn:eg:non-proper:2} imply that
	\[
	 \mathcal{B}_0(S,P) = \emptyset.
	 \]
\end{example}

	 In order to have a more reasonable definition of barcodes (and allow
	 ``bars'' which have open endpoints) we propose the following definition. We use two notions from real algebraic geometry -- that of the real spectrum and the real closed extension of $\R$ by the field of Puiseux series.
	 
	 Let $S \subset \R^k$ be an arbitrary semi-algebraic set and 
	 $P:S \rightarrow \R$ a continuous semi-algebraic function.
	 We define a new filtration $\widetilde{\mathcal{F}}(S,P)$ as follows.
	 
	 The indexing set of the new filtration will the set
	 \[
	 \widetilde{\R} = \{-\infty, +\infty\} \cup \bigcup_{x \in \R} \{x_-,x,x_+\},
	 \]
	 on which a total order is  specified by 
	 \[
	 -\infty < x_- < x < x_+ < y_- < y < y_+ < \infty,
	 \]
	 for all $x < y$ in $\R$.
	 (The ordered set $\widetilde{\R}$ is the \emph{real spectrum} of the ring $\R[X]$ -- see for example \cite[page 134]{BCR}).
	 
	 We now define the filtration $\widetilde{\mathcal{F}}(S,P)$.
	 \begin{definition}[Filtration for semi-algebraic maps not necessarily proper]
	 \label{def:tilde-F}
	 For  $\tilde{t} \in \widetilde{\R}$ define
	\begin{eqnarray*}
	    \widetilde{S}_{\tilde{t}} &=& \E(S,\R\la\eps\ra)_{P = -1/\eps}, \mbox{ if $\tilde{t}=-\infty$},\\
		 &=& \E(S,\R\la\eps\ra)_{P  \leq t-\eps}, \mbox{ if $\tilde{t} = t_-, t \in \R$},\\
		 &=& \E(S,\R\la\eps\ra)_{P  \leq t}, \mbox{ if $\tilde{t} = t \in \R$},\\
		 &=& \E(S,\R\la\eps\ra)_{P  \leq t+\eps}, \mbox{ if $\tilde{t} = t_+, t \in \R$},\\
		 &=& \E(S,\R\la\eps\ra)_{P = 1/\eps}, \mbox{ if $\tilde{t}=+\infty$}.
	\end{eqnarray*}
	(where $\R\la\eps\ra$ is the field of algebraic Puisuex series in
	$\eps$, and $\E(\cdot,\R\la\eps\ra)$ denotes the extension of a semi-algebraic subset of $\R^k$ to $\R\la\eps\ra^k$ -- see Notation~\ref{not:Puiseux} and Notation~\ref{not:extension}).
	\end{definition}
	
	\begin{definition}[Barcode for filtration induced by a semi-algebraic map not necessarily proper]
	    \label{def:tilde-B}
	    For $S \subset \R^k$  an arbitrary semi-algebraic set and 
	 $P:S \rightarrow \R$ a continuous semi-algebraic function,
	 we define
	\[
	\widetilde{\mathcal{B}}_p(S,P) = \mathcal{B}_p(\widetilde{\mathcal{F}}(S,P)).
	\]
	\end{definition}

	It is easy now to verify that for the pair $S,P$ in Example~\ref{eg:non-proper} 
	\[
	\widetilde{\mathcal{B}}_0(S,P) = \{(0_+,+\infty,1)\}.
	\]

Note that 
	\[
	\widetilde{\mathcal{B}}_p(S,P)
	 \subset \widetilde{\R} \times \widetilde{\R} \times \Z_{>0}.
	\]    
Using Hardt triviality theorem, one can deduce that 
$\widetilde{\mathcal{B}}_p(S,P)$ is a finite set.
We will formally prove this statement later for proper semi-algebraic maps
(see Proposition~\ref{prop:finite}).

The barcode for a proper semi-algebraic map takes its value in
$\R \times \R \times \Z_{>0}$ which is properly contained in the 
$\widetilde{\R} \times \widetilde{\R} \times \Z_{>0}$.
It is not difficult to prove that in case $P:S \rightarrow \R$
is a proper semi-algebraic map, the new definition of barcode
agrees with the previous one. 

We record the above mentioned facts in the following proposition for future reference and omit the proofs. 
We will not use it 
in this paper since we restrict ourselves to the proper case.

\begin{proposition}
\label{prop:proper}
For any continuous semi-algebraic map $P:S \rightarrow \R$ and for all
$p \geq 0$, $\widetilde{B}(S,P)$ is a finite set. Moreover,
if $P$ is a proper semi-algebraic map, then for all
$p \geq 0$,
	\[
	\widetilde{\mathcal{B}}_p(S,P) = \mathcal{B}_p(S,P).
	\]
\end{proposition}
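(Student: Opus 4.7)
The plan is to derive both statements from Hardt's semi-algebraic triviality theorem applied to the map $P:S \to \R$. Hardt's theorem produces a finite set $t_1 < \cdots < t_m$ in $\R$ such that $P$ is semi-algebraically trivial over each open interval $(t_i, t_{i+1})$ (taking $t_0 = -\infty$, $t_{m+1} = +\infty$), so that for $s, t$ in the same open interval the inclusion $S_{P \leq s} \hookrightarrow S_{P \leq t}$ is a semi-algebraic homotopy equivalence. By Tarski--Seidenberg transfer, the same remains true after extending scalars to $\R\la\eps\ra$, and the semi-algebraic homology of \cite{Delfs-Knebusch-Crelle} is an invariant of this semi-algebraic homotopy type.

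For finiteness, observe that each space $\widetilde{S}_{\tilde{t}}$ with $\tilde{t} \in \widetilde{\R}$ has homology canonically isomorphic to that of one of the finitely many sets $\emptyset$, $\E(S_{P \leq t_i}, \R\la\eps\ra)$, $\E(S_{P < t_i}, \R\la\eps\ra)$, or $\E(S, \R\la\eps\ra)$, and moreover all comparison maps factor through this finite list. Writing $\mathcal{T} = \{-\infty, (t_1)_-, t_1, (t_1)_+, \ldots, (t_m)_-, t_m, (t_m)_+, +\infty\}$, one sees that the inclusion $\widetilde{S}_{\tilde{s}} \hookrightarrow \widetilde{S}_{\tilde{t}}$ induces an isomorphism on homology whenever $\tilde{s}$ and $\tilde{t}$ lie in the same ``cell'' between consecutive elements of $\mathcal{T}$. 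Consequently, the subquotients $P^{\tilde{s},\tilde{t}}_p(\widetilde{\mathcal{F}}(S,P))$ of Definition~\ref{def:barcode_subquotients} vanish unless both endpoints lie in $\mathcal{T}$, and because Betti numbers of semi-algebraic sets are finite, only finitely many triples contribute to $\widetilde{\mathcal{B}}_p(S,P)$.

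For equality when $P$ is proper, the key claim is that the inclusion $\E(S_{P \leq t}, \R\la\eps\ra) \hookrightarrow \E(S, \R\la\eps\ra)_{P \leq t+\eps}$ is a semi-algebraic homotopy equivalence for every $t \in \R$. Using properness, $S_{P \leq t}$ is closed and bounded, and Hardt triviality applied on a half-open interval $[t, t+\delta)$ avoiding the next element of $\{t_1,\ldots,t_m\}$ yields, for every real $\eps' \in (0,\delta)$, a deformation retraction of $S_{P \leq t+\eps'}$ onto $S_{P \leq t}$; transferring to $\R\la\eps\ra$ gives the claim and hence $\HH_*(\widetilde{S}_{t_+}) \cong \HH_*(\widetilde{S}_t)$. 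Similarly one identifies $\HH_*(\widetilde{S}_{t_-})$ with $\HH_*(S_{P<t})$. From here one verifies that for $(s,t) \in \R \times (\R \cup \{\infty\})$ the subspaces $M^{s,t}_p$ and $N^{s,t}_p$ in Definition~\ref{def:barcode_subspace}, computed in either filtration, coincide because refining the indexing set from $\R$ to $\widetilde{\R}$ adds no new images in the unions $\bigcup_{s' < s}$ --- the indices $s_-$ contribute exactly the same images as indices $s' < s$ in $\R$ sufficiently close to $s$.

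The main obstacle I anticipate is ensuring that the homotopy equivalences derived from Hardt triviality on $\R$ transfer cleanly to the extension $\R\la\eps\ra$, which requires working carefully with the semi-algebraic singular homology of \cite{Delfs-Knebusch-Crelle} and invoking its invariance under real closed extensions of the base field. A secondary subtlety is handling the boundary symbols $\pm \infty$ so that the ``never-dies'' bars match between the two barcode definitions; in particular one needs $\widetilde{S}_{+\infty}$ to have the same homology as $S$ when $P$ is proper, which again follows from Hardt triviality on $(t_m, +\infty)$ together with properness.
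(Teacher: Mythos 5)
Since the paper omits its proof of Proposition~\ref{prop:proper} (the proof is recorded as ``Omitted''), there is no target argument to compare against; I assess your proposal on its own merits. The overall strategy---invoking Hardt triviality to obtain finitely many critical values $t_1<\cdots<t_m$, transferring to $\R\la\eps\ra$, and grouping $\widetilde{\R}$ into cells over which the homotopy type of $\widetilde{S}_{\tilde t}$ and the induced inclusion maps are constant---is the natural one, and the paper itself points to Hardt triviality as the relevant tool just before stating the proposition. Your finiteness argument is essentially correct.

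Your argument for the equality $\widetilde{\mathcal{B}}_p(S,P)=\mathcal{B}_p(S,P)$ in the proper case, however, has two genuine gaps. First, the step ``using properness, $S_{P\leq t}$ is closed and bounded'' is wrong: properness (as defined in Section~\ref{subsubsec:non-proper}) means preimages of \emph{closed and bounded} sets are closed and bounded, but $(-\infty,t]$ is unbounded, so $S_{P\leq t}=P^{-1}((-\infty,t])$ need not be bounded. Take $S=\R$ and $P(x)=x$: $P$ is proper, yet $S_{P\leq 0}=(-\infty,0]$ is unbounded. Consequently the Lemma~\ref{lem:monotone}-style deformation retraction you invoke (which is stated for closed and bounded sets) does not apply directly, and this is precisely where the delicate step $\E(S,\R\la\eps\ra)_{P\leq t_i}\hookrightarrow \E(S,\R\la\eps\ra)_{P\leq t_i+\eps}$ at a critical value needs justification. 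Second, and more seriously, you only address the boundary symbol $+\infty$, whereas the equality can actually fail at $-\infty$: in $\mathcal{F}(S,P)$ one has $S_{P\leq-\infty}=\emptyset$, but $\widetilde{S}_{-\infty}=\E(S,\R\la\eps\ra)_{P=-1/\eps}$ is non-empty whenever $P$ is unbounded below. With $S=\R$ and $P(x)=x$ (proper), a direct computation from Definitions~\ref{def:barcode_subspace}--\ref{def:barcode_multiplicity} gives $\mathcal{B}_0(S,P)=\emptyset$, since the sole $0$-class is present at every real $s$ and hence never ``born'' in the filtration indexed by $\R\cup\{\pm\infty\}$, while $\widetilde{S}_{-\infty}$ is non-empty and contractible so $\mu_0^{-\infty,+\infty}(\widetilde{\mathcal{F}}(S,P))=1$, i.e.\ $\widetilde{\mathcal{B}}_0(S,P)=\{(-\infty,+\infty,1)\}$. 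So the proposition as literally stated appears to require an additional hypothesis (for instance, that $P$ is bounded below on $S$, or that $S$ is closed and bounded as in Theorem~\ref{thm:persistent}); your proof cannot succeed as written without identifying and handling this case.
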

	 \begin{proof}
	     Omitted.
	 \end{proof}

	\section{Continuous to finite filtration}
	\label{sec:results}
	In this section we describe how to efficiently reduce the problem
	of computing the barcode of a continuous semi-algebraic filtration to that of a finite filtration of semi-algebraic sets.
	The mathematical results are encapsulated in Propositions~\ref{prop:finite} and \ref{prop:G2} stated and proved in 
	Section~\ref{subsec:filtration}. Then in Section~\ref{subsec:multiplicities} we prove a formula used to compute the  barcode of a finite filtration (Proposition~\ref{prop:finite:multiplicity}). This formula is not new (see  \cite[page 152]{Edelsbrunner-Harer}\cite{Edelsbrunner-Harer}),  
	however, it is important to deduce that from our new definition of barcodes.  
	
	Recall that we are interested in the persistent homology of filtrations of semi-algebraic sets by the sub-level sets of a polynomial. Recall also (cf. Definition~\ref{def:filtration-sa})  that for  a closed and bounded semi-algebraic set $S \subset \R^k$,  $P \in \R[X_1,\dots, X_k ]$,
	and $t \in \R \cup \{\pm \infty\}$, we denote the filtration
	\[
	\left(S_{P \leq t} = \{ x \in S \mid P(x) \leq t \}\right)_{t\in \R \cup \{\pm \infty\}}
	\]
	by $\mathcal{F}(S,P)$.
	
	Our first observation is that,
	even though the indexing set $\R \cup \{\pm \infty \}$ is infinite, 
	for each $p \geq 0$, the barcode
	$\mathcal{B}_p(\mathcal{F}(S,P))$ is a finite set (cf. Example \ref{eg:torus}).

	\begin{proposition}
		\label{prop:finite}
		For each $p \geq 0$,
		the cardinality of  $\mathcal{B}_p(\mathcal{F}(S,P))$ is finite.
	\end{proposition}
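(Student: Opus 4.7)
The plan is to apply Hardt's semi-algebraic triviality theorem to $P|_S : S \to \R$ and show that the barcode can only have endpoints drawn from a finite subset of $\R \cup \{+\infty\}$. Since $S$ is closed and bounded and $P$ is continuous, Hardt's theorem (see e.g.\ \cite[Theorem 5.46]{BPRbook2}) furnishes a finite set $t_1 < \cdots < t_N$ in $\R$ such that, setting $t_0 = -\infty$ and $t_{N+1} = +\infty$, the restriction of $P$ to $P^{-1}(I_i)$ is semi-algebraically trivial over each open interval $I_i = (t_i, t_{i+1})$. Concretely, there exists a semi-algebraic set $F_i$ and a semi-algebraic homeomorphism $\theta_i : P^{-1}(I_i) \to F_i \times I_i$ commuting with the projection onto $I_i$.

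Next, I would verify that for each $i$ and any $s \leq t$ lying in $I_i$, the inclusion $S_{P \leq s} \hookrightarrow S_{P \leq t}$ induces an isomorphism on $\HH_p$ for every $p \geq 0$. Picking any $s' \in (t_i, s)$ and restricting $\theta_i$ to $P^{-1}([s', t])$ produces a semi-algebraic homeomorphism $P^{-1}([s',t]) \cong F_i \times [s', t]$, which yields a strong deformation retraction of $S_{P \leq t}$ onto $S_{P \leq s}$ that fixes $S_{P \leq s'}$ pointwise. The passage from the open-interval trivialization provided by Hardt's theorem to strong deformation retractions of sub-level sets is the one technical care-point, and it is exactly handled by restricting $\theta_i$ to a closed subinterval as above.

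From these isomorphisms I would deduce that $\mu_p^{s,t}(\mathcal{F}(S,P))$ can be nonzero only for $s \in T^{\mathrm{crit}} := \{t_1, \ldots, t_N\}$ and $t \in T^{\mathrm{crit}} \cup \{+\infty\}$. For births: if $s \in I_i$ and $s' \in (t_i, s)$, surjectivity of $i_p^{s',s}$ forces $\HH_p^{s',s}(\mathcal{F}(S,P)) = \HH_p(S_{P \leq s})$, leaving no nonzero class born at $s$. For deaths: if a class $\gamma \in \HH_p(S_{P \leq s})$ is born at $s$ and dies at $t \in I_i$ with $s < t$, then picking $t' \in (\max(s, t_i), t)$ and using the factorization $i_p^{s,t} = i_p^{t',t} \circ i_p^{s,t'}$ together with the injectivity of $i_p^{t',t}$ transports the relation $i_p^{s,t}(\gamma) \in \HH_p^{s'',t}(\mathcal{F}(S,P))$ back to $i_p^{s,t'}(\gamma) \in \HH_p^{s'',t'}(\mathcal{F}(S,P))$, contradicting the definition of death at $t$. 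Hence the support of $\mu_p^{s,t}(\mathcal{F}(S,P))$ lies in the finite set $T^{\mathrm{crit}} \times (T^{\mathrm{crit}} \cup \{+\infty\})$, so $\mathcal{B}_p(\mathcal{F}(S,P))$ is finite.
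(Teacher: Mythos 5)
Your proof is correct, and it is genuinely different from the route the paper takes. You appeal directly to Hardt's semi-algebraic triviality theorem to get the finite partition $t_1 < \cdots < t_N$, and then carry out a clean deformation-retraction argument (restricting the trivialization $\theta_i$ to a closed subinterval and gluing with the identity on $S_{P \leq s'}$) to show the inclusions $S_{P \leq s} \hookrightarrow S_{P \leq t}$ are homotopy equivalences when $s, t$ lie in the same open interval; from there the vanishing of $\mu_p^{s,t}$ off $T^{\mathrm{crit}} \times (T^{\mathrm{crit}} \cup \{+\infty\})$ follows, essentially as in the paper's Claim~\ref{claim:proof:cor:G:1}. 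The paper, by contrast, deduces Proposition~\ref{prop:finite} as an immediate corollary of the stronger Proposition~\ref{prop:G2}, whose proof replaces $S$ by an infinitesimal tube $S^\star(\bar\eps)$ with non-singular algebraic boundary, applies Thom's first isotopy lemma to a Whitney stratification of that tube (Lemmas~\ref{lem:non-singular}--\ref{lem:Whitney}, Proposition~\ref{prop:G}), and then descends from $\R\la\bar\eps\ra$ back to $\R$ via Lemma~\ref{lem:removal}. The reason the paper does not simply invoke Hardt's theorem is algorithmic: Hardt triviality, as established via semi-algebraic triangulation, has only a doubly exponential effective version, so the resulting finite set of critical parameters would not be computable within the singly exponential budget. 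Your argument is therefore a more economical proof of the purely qualitative statement of Proposition~\ref{prop:finite}, but it does not furnish the computable, singly-exponentially-sized critical set that Proposition~\ref{prop:G2} and Algorithm~\ref{alg:filtration} need. Worth noting: the paper itself flags exactly this trade-off in the introduction when it says an algorithm based on Hardt triviality ``would inevitably lead to a doubly exponential sized filtration.''
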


	\subsection{Reduction to the case of a finite filtration}
	\label{subsec:filtration}
	We will now prove a result (cf. Proposition~\ref{prop:G2} below)  
	from which Proposition~\ref{prop:finite} will follow. 
	Our strategy is to identify a finite set of values $\{s_0,\ldots,s_M\} \subset \R$, such that
	the semi-algebraic homotopy type of the increasing family $S_{P \leq t}$ (as $t$ goes from $-\infty$ to $\infty$), can change
	only when  $t$ crosses one of the $s_i$'s. This would imply that the barcode, $\mathcal{B}_p(\mathcal{F}(S,P))$, of the infinite filtration $\mathcal{F}(S,P)$,
	is \emph{equal}  to the barcode of the \emph{finite filtration}
	$\emptyset \subset S_{P \leq s_0} \subset \cdots \subset S_{P \leq s_M} \subset S$ (cf. Proposition~\ref{prop:G2} below). In addition, we will obtain
	a bound on the number $M$ in terms of the number of polynomials appearing in the definition
	of $S$ and their degrees, as well as the degree of the polynomial $P$. The technique used in the
	proofs of these results are adaptations of the technique used in the proof of the main result  (Theorem 2.1)
	in \cite{BV06}, which gives a singly exponential
	bound on the number of distinct homotopy types amongst the fibers of a semi-algebraic map
	in \cite{BV06}. We need a slightly different statement than that of Theorem 2.1 in \cite{BV06}. However,
	our situation is simpler since we only need the result for maps to $\R$ (rather than to $\R^n$ as is the case in \cite[Theorem 2.1]{BV06}).
	
	\subsubsection{Real closed extensions and Puiseux series}
	We will need some
	properties of Puiseux series with coefficients in a real closed field. We
	refer the reader to \cite{BPRbook2} for further details.
	
	\begin{notation}
	\label{not:Puiseux}
		For $\R$ a real closed field we denote by $\R \left\langle \eps
		\right\rangle$ the real closed field of algebraic Puiseux series in $\eps$
		with coefficients in $\R$. We use the notation $\R \left\langle \eps_{1},
		\ldots, \eps_{m} \right\rangle$ to denote the real closed field $\R
		\left\langle \eps_{1} \right\rangle \left\langle \eps_{2} \right\rangle
		\cdots \left\langle \eps_{m} \right\rangle$. Note that in the unique
		ordering of the field $\R \left\langle \eps_{1}, \ldots, \eps_{m}
		\right\rangle$, $0< \eps_{m} \ll \eps_{m-1} \ll \cdots \ll \eps_{1} \ll 1$.
		
	\end{notation}
	
	\begin{notation}
		\label{not:lim}
		For elements $x \in \R \left\langle \eps \right\rangle$ which are bounded
		over $\R$ we denote by $\lim_{\eps}  x$ to be the image in $\R$ under the
		usual map that sets $\eps$ to $0$ in the Puiseux series $x$.
	\end{notation}
	
	\begin{notation}
		\label{not:extension}
		If $\R'$ is a real closed extension of a real closed field $\R$, and $S
		\subset \R^{k}$ is a semi-algebraic set defined by a first-order formula
		with coefficients in $\R$, then we will denote by $\E(S, \R') \subset \R'^{k}$ the semi-algebraic subset of $\R'^{k}$ defined by
		the same formula.
		It is well known that $\E(S, \R')$ does
		not depend on the choice of the formula defining $S$ 
		\cite[Proposition 2.87]{BPRbook2}.
	\end{notation}
	
	\begin{notation}
		\label{not:monotone}
		Suppose $\R$ is a real closed field,
		and let $X \subset \R^k$ be a closed and bounded  semi-algebraic subset, and $X^+ \subset \R\la\eps\ra^k$
		be a semi-algebraic subset bounded over $\R$. 
		Let for $t \in \R, t >0$, $\widetilde{X}^+_{t} \subset \R^k$ denote the semi-algebraic
		subset obtained by replacing $\eps$ in the formula defining $X^+$ by $t$, and it is
		clear that  for $0 < t \ll 1$, $\widetilde{X}^+_t$ does not depend on the formula chosen. We say that $X^+$ is \emph{monotonically decreasing to $X$}, and denote $X^+ \searrow X$ if the following conditions are satisfied.
		\begin{enumerate}[(a)]
			\item
			for all $0 < t < t'  \ll 1$,  $\widetilde{X}^+_{t} \subset  \widetilde{X}^+_{t'}$;
			\item
			\[
			\bigcap_{t > 0} \widetilde{X}^+_{t} = X;
			\]
			or equivalently $\lim_\eps X^+ =  X$.
		\end{enumerate}
		More generally,
		if $X \subset \R^k$ be a closed and bounded  semi-algebraic subset, and $X^+ \subset \R\la\eps_1,\ldots,\eps_m\ra^k$
		a semi-algebraic subset bounded over $\R$,
		we will say $X^+ \searrow X$ if and only if 
		\[
		X^+_{m+1} = X^+ \searrow X^+_m,  \;X^+_m \searrow X^+_{m-1}, \ldots, X^+_{2} \searrow X^+_1 = X,
		\]
		where for $i=1,\ldots, m$, $X^+_i = \lim_{\eps_i} X^+_{i+1}$. 
		\end{notation}

	The following lemma will be useful later.
	\begin{lemma}
		\label{lem:monotone}
		Let $X \subset \R^k$ be a closed and bounded  semi-algebraic subset, and $X^+ \subset \R\la\bar\eps_1,\ldots,\bar\eps_m\ra^k$
		a semi-algebraic subset bounded over $\R$, such that
		$X^+ \searrow X$.
		Then,
		$\E(X,\R\la\bar\eps_1,\ldots,\bar\eps_m\ra)$ is semi-algebraic deformation retract of $X^+$.
	\end{lemma}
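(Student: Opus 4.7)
The plan is to induct on $m$, with the substance of the argument concentrated in the base case $m = 1$ and the inductive step reducing to it via extension of scalars.

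\emph{Base case $(m = 1)$.} I would apply Hardt's triviality theorem \cite[Theorem 5.46]{BPRbook2} to the projection $\pi : W \to (0, 1]$, where
\[
W = \{(x, t) \in \R^k \times (0, 1] : x \in \widetilde{X}^+_t\},
\]
with the Hardt partition refined to be compatible with the closed semi-algebraic subset $X \times (0, 1] \subset W$. For $t_0 > 0$ sufficiently small this yields a semi-algebraic homeomorphism $h_t : \widetilde{X}^+_{t_0} \to \widetilde{X}^+_t$ depending semi-algebraically on $t \in (0, t_0]$, which can be arranged to fix $X$ pointwise. By the semi-algebraic monotonicity theorem and boundedness of $\widetilde{X}^+_{t_0}$, the limit $h_0(x) = \lim_{t \to 0^+} h_t(x)$ exists for each $x \in \widetilde{X}^+_{t_0}$ and lies in $\bigcap_{t > 0} \widetilde{X}^+_t = X$. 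The map $G(x, s) = h_{(1-s) t_0}(x)$ for $s \in [0, 1)$, extended by $G(x, 1) = h_0(x)$, is then a semi-algebraic deformation retraction of $\widetilde{X}^+_{t_0}$ onto $X$. Since the construction is uniform in the parameter $t_0$, substituting $t_0 = \bar{\eps}_1$ and invoking the Tarski-Seidenberg transfer principle produces the desired semi-algebraic deformation retraction of $X^+$ onto $\E(X, \R\la\bar\eps_1\ra)$.

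\emph{Inductive step.} Given the chain $X^+ = X^+_{m+1} \searrow X^+_m \searrow \cdots \searrow X^+_1 = X$, I would apply the base case over the ground field $\R\la\bar\eps_1, \ldots, \bar\eps_{m-1}\ra$ (valid by Tarski-Seidenberg) to the outermost link $X^+_{m+1} \searrow X^+_m$, obtaining a semi-algebraic deformation retraction of $X^+ = X^+_{m+1}$ onto $\E(X^+_m, \R\la\bar\eps_1, \ldots, \bar\eps_m\ra)$. The induction hypothesis applied to the chain of length $m - 1$ from $X^+_m$ down to $X$ gives a semi-algebraic deformation retraction of $X^+_m$ onto $\E(X, \R\la\bar\eps_1, \ldots, \bar\eps_{m-1}\ra)$. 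Since being a semi-algebraic deformation retract is a first-order definable property, it is preserved by the extension of scalars from $\R\la\bar\eps_1, \ldots, \bar\eps_{m-1}\ra$ to $\R\la\bar\eps_1, \ldots, \bar\eps_m\ra$, and composing the two deformation retractions yields the conclusion.

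\emph{Main obstacle.} The delicate point sits in the base case: the naive application of Hardt's triviality theorem only guarantees that $X$ is preserved as a set by each $h_t$, not pointwise. Arranging pointwise fixing of $X$ requires refining the Hardt partition using a cylindrical decomposition adapted to $X \times [0, t_0]$, or equivalently applying a semi-algebraic collar-neighborhood construction along $X$. Once this is in place, the existence, continuity in $x$, and limit properties of $h_0$ follow automatically from the semi-algebraic monotonicity theorem together with the hypothesis $X = \bigcap_{t > 0} \widetilde{X}^+_t$.
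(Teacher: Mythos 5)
The paper's proof of this lemma is a one-line pointer to Lemma 16.17 of \cite{BPRbook2}; the route you take — semi-algebraic (Hardt) triviality of the parametrized family over $(0,1]$, taking the $\lim_{\eps}$ of the trivializing homeomorphisms, and then invoking the transfer principle to substitute $t_0=\eps$ — is the same strategy that underlies the cited lemma. Your inductive step is sound: the $\lim_{\eps_i}$ of a set bounded over the base is closed and bounded over the base, so the hypotheses of the base case do propagate, and a semi-algebraic deformation retraction is carried through the extension $\R\la\bar\eps_1,\ldots,\bar\eps_{m-1}\ra \hookrightarrow \R\la\bar\eps_1,\ldots,\bar\eps_m\ra$ simply by applying $\E(\cdot,\R\la\bar\eps_1,\ldots,\bar\eps_m\ra)$ to the homotopy (all defining conditions are first-order in the given map, so no complexity bound is needed). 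The remark that the base case is "valid by Tarski--Seidenberg" over the intermediate field is a slight misstatement — the base case simply holds over any real closed field and needs no transfer — but this is harmless.

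The real issue is exactly where you flag it, and your proposed remedy does not close it. A cylindrical decomposition adapted to $X\times(0,t_0]$ (or a Hardt partition refined to be compatible with this subset) guarantees only that each trivializing homeomorphism $h_t$ carries $X$ \emph{onto} $X$; the restriction $h_t|_X$ is some semi-algebraic self-homeomorphism of $X$, not the identity. Consequently the homotopy $G(x,s)=h_{(1-s)t_0}(x)$ moves points of $X$, and even the end map $h_0|_X$ need not be $\mathrm{id}_X$ (a priori it is only a semi-algebraic self-map of $X$ homotopic to the identity through the family $h_t|_X$). To produce a \emph{deformation retraction} one has to correct for this, e.g.\ by composing with a semi-algebraic collar of $X$ in $\widetilde X^+_{t_0}$ obtained from a compatible triangulation of the pair $(\widetilde X^+_{t_0},X)$ and using the regular-neighborhood collapse; the phrase "collar-neighborhood construction along $X$" gestures at this, but the claim that everything "follows automatically" is not justified. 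Without this correction your argument proves that the inclusion $\E(X,\R\la\bar\eps\ra)\hookrightarrow X^+$ is a semi-algebraic homotopy equivalence (which is in fact all that Proposition~\ref{prop:G} uses), not that it is a deformation retract.

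A secondary point worth noting: the step "$h_0(x)\in\bigcap_{t>0}\widetilde X^+_t=X$" tacitly uses that the sets $\widetilde X^+_t$ are closed — otherwise the monotone limit can land in $\overline{\widetilde X^+_{t'}}\setminus \widetilde X^+_{t'}$. The lemma as stated only assumes $X^+$ bounded over $\R$, not closed; in the paper's application $X^+$ is a $\mathcal P^\star(\bar\eps)$-closed set so this is fine, but in a self-contained proof you should either add the closedness hypothesis or pass to closures.
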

	
	\begin{proof}
		See proof of Lemma 16.17 in 
		\cite{BPRbook2}.
	\end{proof}
	
	\subsubsection{Outline of the reduction}
	\label{subsubsec:finite-outline}
	Before delving into the detail we first give an outline of the main idea behind the reduction to the finite filtration case.
	The key mathematical result that we need is the following. Given a  semi-algebraic subset $X \subset \R^{k+1}$, obtain a semi-algebraic partition of $\R \cup \{\pm \infty\}$ into points $-\infty = s_{-1} < s_0 < s_1 < \cdots < s_M < s_{M+1} = \infty$, and open 
	intervals $(s_i,s_{i+1}), -1 \leq i \leq M$, such that the homotopy type of $X_t = X \cap \pi_{k+1}^{-1}$ stays constant over each open interval $(s_i,s_{i+1})$ (here $\pi_{k+1}$ denotes the projection
	on the last coordinate). In our application the fibers $X_t$ will be a non-decreasing in $t$ (in fact, $X_t$ will be equal to $S_{P \leq t}$) but we do not need this property to hold for obtaining the 
	partition mentioned above.
	
	The following example  is illustrative. 
	
	\begin{figure}[h]
		\centering
		\includegraphics[scale=1.00]{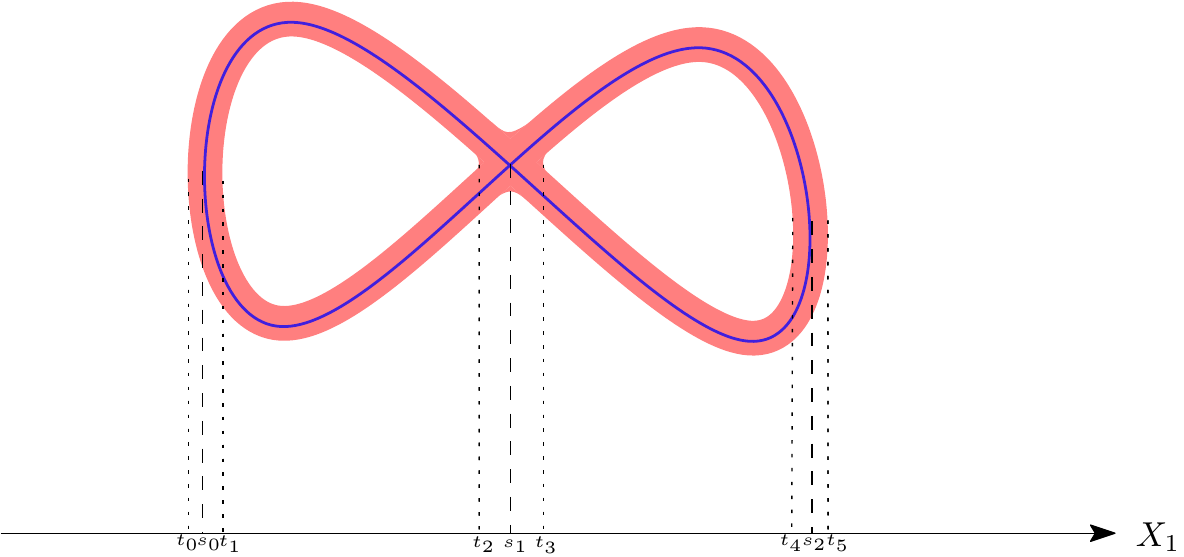}%
		\caption{\small Homotopy types of fibers}
		\label{fig:eight}
	\end{figure}

	Suppose that $X \subset \R^{2}$ is a singular curve shown in blue in Figure~\ref{fig:eight}. We define a semi-algebraic
	tubular neighborhood $X^\star(\eps)$ of $X$ using an infinitesimal $\eps$ (shown in red), 
	whose boundary has good algebraic properties -- namely, in this case a finite number of critical values $t_0 < t_1 < \cdots < t_5$ for the projection map onto the chosen coordinate $X_{k+1}$ which
	is shown as $X_1$ in the figure. The $t_i$'s give a partition of $\R\la\eps\ra$ rather than that of
	$\R$, 
	and over each interval $(t_i,t_{i+1})$ the semi-algebraic  homeomorphism type of $X^\star(\eps)$ (but not necessarily the semi-algebraic homotopy type of $\E(X,\R\la\eps\ra)$) stay constant.
	Clearly this partition does not have the homotopy invariance property with respect to the
	set $\E(X,\R\la\eps\ra)$. However, the intervals $(t_1,t_2) \cap \R = (s_0,s_1)$ and $(t_3,t_4) \cap \R = (s_1,s_2)$ does have the require property with respect to $X$, and the points $s_0,s_1,s_2$
	gives us the require partition.
	
	In the general case the definition of the tube $X^\star(\eps)$ is more involved and uses more than
	one infinitesimal (cf. Notation~\ref{not:start}). 
	The set of points corresponding to the $t_i$'s  in the above example is defined precisely in 
	Proposition~\ref{prop:G}  where the  important property of the partition of $\R\la\bar\eps\ra$ they 
	induce is also proved. The passage from the $t_i$'s to the $s_i$'s 
	and the important property satisfied by the $s_i$'s is described in Lemma~\ref{lem:removal}.
	The finite set of values $\{s_0,\ldots,s_M\} \subset \R$ is then used to define a finite filtration
	of the given semi-algebraic set, and the fact that this finite filtration has the same barcode as the 
	infinite filtration we started with is proved in Proposition~\ref{prop:G2}.
	Proposition~\ref{prop:G2} immediately implies Proposition~\ref{prop:finite}.
	
	There are several further technicalities involved in converting the above construction into an efficient algorithm. These are explained in Section~\ref{sec:finite-algorithm}. The complexity of the whole
	procedure is bounded singly exponentially.
	
	\subsubsection{Proof of Proposition~\ref{prop:finite}}
	\label{subsubsec:finite-prop-2}
	We begin by fixing some notation.
	
	\begin{notation}
		\label{not:Z}
		For $\mathcal{Q} \subset \R[X_1,\ldots,X_k]$ we will denote by 
		\[
		\ZZ(\mathcal{Q},\R^k) = \{ x \in \R^k | \bigwedge_{Q \in \mathcal{Q}} Q(x) = 0\}.
		\]
		For $Q \in \R[X_1,\ldots,X_k]$, we will denote by $\ZZ(Q,\R^k) = \{x \in \R^k \mid Q(x) = 0\}$.
	\end{notation}
	
	\begin{definition}
		\label{def:sign-condition}
		Let $\mathcal{Q}$ be a finite subset of $\R [X_{1} , \ldots ,X_{k} ]$. 
		A \emph{sign condition}  on $\mathcal{Q}$ is an element of $\{0,1, -1\}^{\mathcal{Q}}$.
		We say that $\mathcal{Q}$
		\emph{realizes} the sign condition $\sigma$ at $x \in \R^{k}$
		if
		\[
		\bigwedge_{Q \in \mathcal{Q}} \sign(Q(x)) = \sigma(Q).
		\]
		
		The \emph{realization of the sign condition
			$\sigma$} is 
		\[ \RR ( \sigma ) = \{x \in \R^{k}   \mid  
		\bigwedge_{Q \in \mathcal{Q}} \sign(Q(x))= \sigma(Q)\} . 
		\]
		The sign condition $\sigma$ is \emph{realizable}  if $\RR ( \sigma )$ is non-empty.
		We denote by $\Sign(\mathcal{Q})$ the set of realizable sign conditions of $\mathcal{Q}$.
	\end{definition}

	Let $R \in \R$ with $R> 0$,
	and let 
	\[
	\mathcal{P} = \{P_0, P_1,\ldots,P_s\} \subset \R[X_1,\ldots,X_k],
	\]
	with $P_0 = X_1^2 +\cdots +X_k^2 - R$. Let $P \in \R[X_1,\ldots,X_k]$, and also let
	$\phi$ be a closed
	$(\mathcal{P} - \{P_0\})$-formula, and $\widetilde{\phi}$ be 
	$\phi  \wedge (P_0 \leq 0) \wedge (P - Y \leq 0)$, where $Y$ is a new variable.
	So $\phi$ is a $(\mathcal{P} \cup \{P -Y\})$-closed formula.
	Let $P_{s+1} = P - Y$.
	
	\begin{notation}
		\label{not:start}
		For $\bar{\eps} = (\eps_0,\ldots,\eps_{s+1})$, we denote by
		$\phi^\star(\bar{\eps})$, the $\mathcal{P}^\star(\bar{\eps})$-closed formula
		obtained by replacing each occurrence of $P_i \geq 0$ in $\phi$ by $P_i + \eps_i \geq 0$ 
		(resp. $P_i \leq 0$ in $\widetilde{\phi}$ by $P_i - \eps_i \leq 0$) for $0 \leq i \leq s+1$,
		where 
		\[
		\mathcal{P}^\star(\bar{\eps}) = \bigcup_{0 \leq i \leq s+1} \{ P_i +\eps_i, P_i - \eps_i \}.
		\]
	\end{notation}
	
	Observe that
	\[
	S^\star(\bar{\eps}):= \RR(\phi^\star(\bar{\eps})) \subset \R\la\bar{\eps}\ra^{k+1}
	\] 
	is a $\mathcal{P}^\star(\bar{\eps})$-closed semi-algebraic set,
	and we define $\Sigma_\phi \subset 
	\{-1,0,1\}^{\mathcal{P}^\star(\bar{\eps})}$ by
	\begin{equation}
		\label{eqn:Sigma}
		S^\star(\bar{\eps}) =\bigcup_{ \sigma \in \Sigma_{\phi}, \RR(\sigma) \neq \emptyset} \RR(\sigma).
	\end{equation}

	\begin{lemma}
		\label{lem:non-singular}
		For each $\mathcal{Q} \subset \mathcal{P}^\star(\bar\eps)$,
		$\ZZ(\mathcal{Q},\R\la\bar\eps\ra^{k+1})$ is either empty or 
		is a non-singular $(k+1-\card(\mathcal{Q}))$-dimensional 
		real variety
		such that at every point $(x_1,\ldots,x_k,y) \in \ZZ(\mathcal{Q},\R\la\bar\eps\ra^{k+1}) $, the
		$(\card(\mathcal{Q}) \times (k+1))$-Jacobi matrix,
		\[
		\left( \frac{\partial P}{\partial X_i} , \frac{\partial P}{\partial Y}
		\right)_{P \in \mathcal{Q}\
			1\leq i \leq k}
		\]
		has the maximal rank $\card(\mathcal{Q})$.
	\end{lemma}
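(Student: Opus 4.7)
The plan is to reduce the non-singularity assertion to a generic-value argument that exploits the algebraic independence of the infinitesimals $\eps_0,\ldots,\eps_{s+1}$ over $\R$. First I would dispose of a trivial case: if $\mathcal{Q}$ contains both $P_i+\eps_i$ and $P_i-\eps_i$ for some $i$, then any common zero of $\mathcal{Q}$ would force $2\eps_i=0$, which is impossible, so $\ZZ(\mathcal{Q},\R\la\bar{\eps}\ra^{k+1})=\emptyset$. Hence we may assume $\mathcal{Q}=\{P_i+\sigma_i\eps_i \mid i\in I\}$ for some $I\subset\{0,1,\ldots,s+1\}$ and signs $\sigma_i\in\{-1,+1\}$; note that the Jacobi matrix of $\mathcal{Q}$ with respect to $(X_1,\ldots,X_k,Y)$ equals the Jacobi matrix of $(P_i)_{i\in I}$, since the $\eps_i$'s are constants.

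Next I would consider the polynomial map $F=(P_i)_{i\in I}\colon \R^{k+1}\to\R^{|I|}$ and let $C\subset\R^{k+1}$ be its critical locus (the set where the Jacobi matrix of $F$ has rank $<|I|$). By the semi-algebraic Sard's theorem, $F(C)$ is a semi-algebraic subset of $\R^{|I|}$ of dimension strictly less than $|I|$, so $F(C)\subset\ZZ(R,\R^{|I|})$ for some nonzero polynomial $R\in\R[Y_1,\ldots,Y_{|I|}]$. (If $|I|>k+1$, then already $F(\R^{k+1})$ has dimension at most $k+1<|I|$, and the same conclusion holds; in fact in that case one obtains $\ZZ(\mathcal{Q},\R\la\bar{\eps}\ra^{k+1})=\emptyset$.)

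Now I would consider the point $c=(-\sigma_i\eps_i)_{i\in I}\in\R\la\bar{\eps}\ra^{|I|}$. Because the $\eps_i$'s form a successive chain of infinitesimals, they are algebraically independent over $\R$, so $R(c)$ is a nonzero polynomial expression in the $\eps_i$'s and therefore $R(c)\neq 0$. Consequently $c\notin\E(F(C),\R\la\bar{\eps}\ra)$. Extending $F$ by the same polynomial formulas to a map $\R\la\bar{\eps}\ra^{k+1}\to\R\la\bar{\eps}\ra^{|I|}$, any point $(x,y)\in\ZZ(\mathcal{Q},\R\la\bar{\eps}\ra^{k+1})$ satisfies $F(x,y)=c$ and is therefore a regular point of the extended map; the Jacobi matrix of $\mathcal{Q}$ has full rank $|I|=\card(\mathcal{Q})$ at $(x,y)$, and the implicit function theorem (valid over real closed fields) gives that $\ZZ(\mathcal{Q},\R\la\bar{\eps}\ra^{k+1})$ is non-singular of dimension $k+1-\card(\mathcal{Q})$ in a neighborhood of $(x,y)$.

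The main thing to be careful about is invoking semi-algebraic Sard correctly and transferring its conclusion from $\R$ to $\R\la\bar{\eps}\ra$; the algebraic-independence step that rules out $c\in\E(F(C),\R\la\bar{\eps}\ra)$ is the conceptual heart of the argument, but is routine once the setup is in place.
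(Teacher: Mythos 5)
The paper defers this lemma to \cite{BV06} without an inline proof; your argument reconstructs the standard proof from that reference (and from the analogous results in \cite{BPRbook2}): reduce to the case that $\mathcal{Q}$ selects at most one of each pair $\{P_i+\eps_i, P_i-\eps_i\}$, observe that the Jacobian of $\mathcal{Q}$ coincides with that of $(P_i)_{i\in I}$, apply semi-algebraic Sard to trap the critical values inside $\ZZ(R,\R^{|I|})$ for a nonzero $R$ defined over $\R$, and use the algebraic independence of the $\eps_i$'s over $\R$ (together with the Tarski--Seidenberg transfer of images to $\R\la\bar\eps\ra$) to conclude $R(c)\neq 0$, so $c$ is a regular value. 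Your proof is correct and is essentially the same approach as the cited source.
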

	
	\begin{proof}
		See \cite{BV06}.
	\end{proof}

	Now let $\pi_{k+1}:\R\la\bar\eps\ra^{k+1} \rightarrow \R\la\bar\eps\ra$ denote the projection to the last (i.e. the  $Y$) coordinate, and 
	$\pi_{[1,k]}:\R\la\bar\eps\ra^{k+1} \rightarrow \R\la\bar\eps\ra^k$ denote the projection to the first 
	(i.e. $(X_1,\ldots,X_k)$) $k$ coordinates.
	
	For any semi-algebraic 
	subset $S \subset \R\la\bar\eps\ra^{k+1}$, and $T \subset \R\la\bar\eps\ra$, we denote by
	$S_T= \pi_{[1,k]}(S \cap \pi_{k+1}^{-1}(T))$.
	For $t \in \R\la\eps\ra$, we will denote by
	$S_{ \leq t} = S_{(-\infty,t]}$, and $S_t = S_{\{t\}}$. 
	
	\begin{notation}[Critical points and critical values]
		\label{not:crit}
		For $\mathcal{Q} \subset \mathcal{P}^\star(\bar\eps)$, we denote by $\Crit(\mathcal{Q})$
		the subset of $\ZZ(\mathcal{Q},\R\la\bar\eps\ra^{k+1})$ at which the the Jacobian matrix,
		\[
		\left( \frac{\partial P}{\partial X_i} \right)_{P \in \mathcal{Q},
			1 \leq i \leq k}
		\]
		is not of the maximal possible rank.
		We denote $\crit(\mathcal{Q}) = \pi(\Crit(\mathcal{Q}))$.
	\end{notation}
	
	\begin{lemma}
		\label{lem:sard}
		The set 
		\[
		\bigcup_{\mathcal{Q} \subset \mathcal{P}^\star(\bar\eps)} \crit(\mathcal{Q})
		\]
		is finite.
	\end{lemma}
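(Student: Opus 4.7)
The plan is to interpret $\crit(\mathcal{Q})$ as the set of critical values of a smooth semi-algebraic map defined on a smooth semi-algebraic variety, and then apply the semi-algebraic Sard theorem (valid over any real closed field, see \cite{BCR,BPRbook2}).

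First, I would fix $\mathcal{Q} \subset \mathcal{P}^\star(\bar\eps)$ and show that $\Crit(\mathcal{Q})$ is exactly the critical set of the restriction $\pi_{k+1}|_{\ZZ(\mathcal{Q},\R\la\bar\eps\ra^{k+1})}$. By Lemma~\ref{lem:non-singular}, whenever $\ZZ(\mathcal{Q},\R\la\bar\eps\ra^{k+1})$ is non-empty it is a smooth real variety of dimension $k+1-\card(\mathcal{Q})$, and its tangent space at any point is the kernel of the full $\card(\mathcal{Q}) \times (k+1)$ Jacobian
\[
\left(\frac{\partial P}{\partial X_i},\; \frac{\partial P}{\partial Y}\right)_{P \in \mathcal{Q},\; 1 \leq i \leq k},
\]
which has maximal rank $\card(\mathcal{Q})$ everywhere. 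The differential of $\pi_{k+1}$ restricted to this tangent space fails to be surjective onto the one-dimensional target precisely when the truncated $X$-Jacobian $\left(\partial P/\partial X_i\right)_{P \in \mathcal{Q},\, 1 \leq i \leq k}$ has rank strictly less than $\card(\mathcal{Q})$, which is the defining condition of $\Crit(\mathcal{Q})$ in Notation~\ref{not:crit}. Hence $\Crit(\mathcal{Q})$ is the critical locus, and $\crit(\mathcal{Q}) = \pi_{k+1}(\Crit(\mathcal{Q}))$ is the corresponding set of critical values.

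Next, I would invoke the semi-algebraic version of Sard's theorem applied to the smooth semi-algebraic map $\pi_{k+1}|_{\ZZ(\mathcal{Q},\R\la\bar\eps\ra^{k+1})}$, which asserts that the set of critical values is a semi-algebraic subset of the target of dimension strictly less than $\dim \R\la\bar\eps\ra = 1$. A zero-dimensional semi-algebraic subset of $\R\la\bar\eps\ra$ is finite, so $\crit(\mathcal{Q})$ is finite. The degenerate cases are handled directly: if $\card(\mathcal{Q}) > k+1$ then $\ZZ(\mathcal{Q},\R\la\bar\eps\ra^{k+1})$ is empty and $\crit(\mathcal{Q}) = \emptyset$; if $\card(\mathcal{Q}) = k+1$ then $\ZZ(\mathcal{Q},\R\la\bar\eps\ra^{k+1})$ itself is zero-dimensional, hence finite, and so is its image under $\pi_{k+1}$.

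Finally, since $\mathcal{P}^\star(\bar\eps)$ is a finite set of polynomials, its power set is finite, and a finite union of finite sets is finite, completing the proof. The only non-routine point is the appeal to Sard's theorem in the real closed field $\R\la\bar\eps\ra$ rather than $\mathbb{R}$; but this is a standard fact in semi-algebraic geometry that follows from the Tarski--Seidenberg transfer principle applied to the classical statement, which is why the argument goes through uniformly over any real closed field.
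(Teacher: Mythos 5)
Your proof is correct and follows essentially the same route as the paper: the paper's proof simply cites Lemma~\ref{lem:non-singular} together with the semi-algebraic Sard's lemma, and your argument is a careful unpacking of exactly that, including the identification of $\Crit(\mathcal{Q})$ with the critical locus of $\pi_{k+1}|_{\ZZ(\mathcal{Q},\R\la\bar\eps\ra^{k+1})}$, the dimension-drop conclusion from Sard, and the finite union over the finite power set of $\mathcal{P}^\star(\bar\eps)$.
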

	
	\begin{proof}
		Follows from Lemma~\ref{lem:non-singular} and the semi-algebraic Sard's lemma (see for example \cite[Theorem 5.56]{BPRbook2}).
	\end{proof}

	\begin{lemma}
		\label{lem:Whitney}
		The partitions
		\begin{eqnarray*}
			\label{partition}
			\R^{k+1} &=& \bigcup_{ \sigma \in {\rm Sign}(\mathcal{P}^\star(\bar\eps))}  \RR(\sigma),\\
			S^\star(\bar\eps) &=& \bigcup_{ \sigma \in \Sigma_\phi} \RR(\sigma),
		\end{eqnarray*}
		are  compatible Whitney stratifications of $\R^{k+1}$ and $S^\star(\bar\eps)$ respectively.
	\end{lemma}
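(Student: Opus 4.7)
The strategy is to leverage the transversality established in Lemma~\ref{lem:non-singular} to verify, stratum by stratum, the defining properties of a Whitney stratification, and then observe that compatibility between the two stratifications is automatic from \eqref{eqn:Sigma}.

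First I would show that each set $\RR(\sigma)$ (for $\sigma$ a realizable sign condition on $\mathcal{P}^\star(\bar\eps)$) is a smooth semi-algebraic submanifold of $\R\la\bar\eps\ra^{k+1}$. To this end, let $\mathcal{Q}_\sigma = \{Q \in \mathcal{P}^\star(\bar\eps) \mid \sigma(Q) = 0\}$. Then $\RR(\sigma)$ is the intersection of $\ZZ(\mathcal{Q}_\sigma,\R\la\bar\eps\ra^{k+1})$ with the open semi-algebraic set cut out by the strict inequalities $\sigma(Q)\cdot Q > 0$ for $Q \in \mathcal{P}^\star(\bar\eps) \setminus \mathcal{Q}_\sigma$. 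By Lemma~\ref{lem:non-singular}, $\ZZ(\mathcal{Q}_\sigma,\R\la\bar\eps\ra^{k+1})$ is either empty or a smooth variety of dimension $k+1-\card(\mathcal{Q}_\sigma)$ with the Jacobian of $\mathcal{Q}_\sigma$ of maximal rank at every point, so $\RR(\sigma)$ is a smooth submanifold of the same dimension. Local finiteness and the frontier condition are standard: if $\RR(\sigma')$ meets $\overline{\RR(\sigma)}$, then necessarily $\mathcal{Q}_\sigma \subset \mathcal{Q}_{\sigma'}$, and by continuity the strict signs of the remaining polynomials must agree, which forces $\RR(\sigma') \subset \overline{\RR(\sigma)}$.

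The main obstacle is Whitney's condition (b) (which, as usual, implies (a)). Here the point is that the maximal-rank condition in Lemma~\ref{lem:non-singular} means the family of polynomials in $\mathcal{P}^\star(\bar\eps)$ meet transversally: for every subset $\mathcal{Q} \subset \mathcal{P}^\star(\bar\eps)$ the zero set $\ZZ(\mathcal{Q},\R\la\bar\eps\ra^{k+1})$ is smooth and its tangent space is cut out by the gradients of the elements of $\mathcal{Q}$. It is a classical fact in real algebraic geometry (see, for instance, \cite[Section 9.7]{BCR} or the argument used in \cite{BV06}) that under such transversality the stratification by sign conditions of a finite family of polynomials is a Whitney stratification; the proof proceeds by the curve selection lemma, parametrizing a degenerating sequence $y_n \to x$ on a higher-dimensional stratum by a semi-algebraic arc, and using the transverse vanishing of the polynomials whose sign switches to $0$ to show that both the tangent-plane limits and the secant-line limits are controlled by the same gradient kernels. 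Applying this to our situation yields Whitney (b) for the stratification of $\R\la\bar\eps\ra^{k+1}$ (note the lemma asserts $\R^{k+1}$ but coefficients are really in $\R\la\bar\eps\ra$).

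Finally, compatibility is immediate: by \eqref{eqn:Sigma}, $S^\star(\bar\eps)$ is the union of those strata $\RR(\sigma)$ with $\sigma \in \Sigma_\phi$, so its induced stratification is simply the restriction of the stratification of the ambient space and trivially inherits the Whitney property. The main work is thus concentrated in verifying condition (b), which is the part where the genericity enforced by the infinitesimals $\eps_0,\ldots,\eps_{s+1}$ plays an essential role; everything else reduces to bookkeeping on sign conditions and applications of Lemma~\ref{lem:non-singular}.
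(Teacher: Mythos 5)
Your proposal is correct and follows the same route as the paper, whose proof is merely a one-line citation to the definition of Whitney stratification and Lemma~\ref{lem:non-singular}; you are filling in what lies behind that citation, namely that generic-position intersection of the hypersurfaces $\{Q=0\}$, $Q\in\mathcal{P}^\star(\bar\eps)$, guaranteed by Lemma~\ref{lem:non-singular}, makes the sign-condition stratification locally a coordinate-plane configuration, for which all Whitney conditions are standard. One small imprecision: in your frontier-condition argument the phrase ``by continuity the strict signs must agree, which forces $\RR(\sigma')\subset\overline{\RR(\sigma)}$'' does not quite close the gap, since the containment also needs the transversality of Lemma~\ref{lem:non-singular} to move off the extra zero locus $\mathcal{Q}_{\sigma'}\setminus\mathcal{Q}_\sigma$ in prescribed sign directions while staying on $\ZZ(\mathcal{Q}_\sigma,\cdot)$.
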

	
	\begin{proof}
		Follows directly from the definition of Whitney stratification (see \cite{GM,CS}),
		and Lemma~\ref{lem:non-singular}.
	\end{proof}
	
	We are now in a position to prove the key mathematical result that allows us to reduce the filtration
	of a semi-algebraic set by the sub-level sets of a polynomial to the case of a finite filtration.
	
	\begin{proposition}
		\label{prop:G}
		Suppose  
		\[\bigcup_{\mathcal{Q} \subset \mathcal{P}^\star(\bar\eps)} \crit(\mathcal{Q}) = \{t_0,\ldots,t_N\},
		\] 
		with $t_0 < t_1 < \cdots < t_N$ (cf. Lemma~\ref{lem:sard}). Then for
		$0 \leq i < N$, 
		$a, b \in \R$ such that $(a,b) \subset (t_i,t_{i+1}) \cap \R$, and for any $c \in (a,b)$, the inclusion 
		$$\RR(\phi(\cdot,a)) \hookrightarrow \RR(\phi(\cdot,c))$$
		is a semi-algebraic homotopy equivalence, 
		
	\end{proposition}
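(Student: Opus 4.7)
The plan is to prove the statement in three stages: first establish a triviality result for the perturbed set $S^\star(\bar\eps)$, then transfer this back to a deformation retract between the unperturbed sub-level sets (after extension to $\R\la\bar\eps\ra$) using Lemma~\ref{lem:monotone}, and finally descend from $\R\la\bar\eps\ra$ to $\R$ by a transfer argument.

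For the first stage, by Lemma~\ref{lem:Whitney} the decomposition $S^\star(\bar\eps) = \bigcup_{\sigma \in \Sigma_\phi} \RR(\sigma)$ is a Whitney stratification with smooth strata (Lemma~\ref{lem:non-singular}), and the perturbed atom $P_0 - \eps_0 \leq 0$ keeps $S^\star(\bar\eps)$ closed and bounded, so $\pi_{k+1}$ restricted to $S^\star(\bar\eps)$ is a proper map. By the definition of $\Crit(\mathcal{Q})$ and the hypothesis that all the stratified critical values lie in $\{t_0,\ldots,t_N\}$, the restriction of $\pi_{k+1}$ to every stratum of $S^\star(\bar\eps)$ is submersive over $(t_i,t_{i+1})$. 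I would then invoke the semi-algebraic version of Thom's first isotopy lemma (equivalently, a stratified Hardt triviality in the real closed field $\R\la\bar\eps\ra$) to obtain a semi-algebraic homeomorphism
\[
h : F \times (t_i,t_{i+1}) \;\longrightarrow\; S^\star(\bar\eps) \cap \pi_{k+1}^{-1}((t_i,t_{i+1})), \qquad \pi_{k+1}\circ h(x,t) = t.
\]
Using $h$ and the linear reparametrization $h(x,t)\mapsto h(x,t-s(t-a))$ for $s\in [0,1]$, identity on $S^\star(\bar\eps)_{\leq a}$, I obtain a semi-algebraic strong deformation retract of $S^\star(\bar\eps)_{\leq c}$ onto $S^\star(\bar\eps)_{\leq a}$, so the inclusion $S^\star(\bar\eps)_{\leq a} \hookrightarrow S^\star(\bar\eps)_{\leq c}$ is a semi-algebraic homotopy equivalence.

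For the second stage, the perturbation in Notation~\ref{not:start} enlarges each atom of $\widetilde\phi$, so $S^\star(\bar\eps) \searrow \E(\RR(\widetilde\phi),\R\la\bar\eps\ra)$ in the sense of Notation~\ref{not:monotone}, and the same is true after intersecting with the closed half-space $\{Y \leq t\}$ for any $t \in \R$. Lemma~\ref{lem:monotone} then furnishes, for each such $t$, a semi-algebraic deformation retract of $S^\star(\bar\eps)_{\leq t}$ onto $\E(\RR(\widetilde\phi(\cdot,t)),\R\la\bar\eps\ra)$. The construction in the proof of Lemma~\ref{lem:monotone} is a flow that shrinks each perturbed atom back to its unperturbed position; since this flow commutes with restriction to $\{Y \leq a\}$, the retraction for $t=c$ restricts to a retraction for $t=a$. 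Combined with the first stage, this gives that the inclusion
\[
\E(\RR(\widetilde\phi(\cdot,a)),\R\la\bar\eps\ra) \;\hookrightarrow\; \E(\RR(\widetilde\phi(\cdot,c)),\R\la\bar\eps\ra)
\]
is a semi-algebraic homotopy equivalence over $\R\la\bar\eps\ra$. For the final descent, since both sets and the inclusion are definable over $\R$, and since the existence of a semi-algebraic homotopy equivalence is a first-order statement in the language of ordered fields, the Tarski--Seidenberg transfer principle yields a semi-algebraic homotopy equivalence over $\R$ itself (cf. the treatment of real closed extensions in \cite{BPRbook2}).

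The main obstacle is arranging the deformation retractions of the second stage \emph{compatibly} with the inclusion $S^\star(\bar\eps)_{\leq a} \subset S^\star(\bar\eps)_{\leq c}$, so that the homotopy equivalence produced by Thom's isotopy lemma on the perturbed level passes to a homotopy equivalence of the extensions of the unperturbed sub-level sets. The interaction between the infinitesimals $\eps_0,\ldots,\eps_s$ (used to desingularize and put the perturbed set into Whitney stratified form) and $\eps_{s+1}$ (used to relax the sub-level constraint $P \leq Y$) must be handled carefully; once this compatibility is established the remaining steps are essentially standard.
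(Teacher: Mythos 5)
Your proposal follows essentially the same route as the paper: Whitney stratification of the perturbed set (Lemmas~\ref{lem:non-singular} and \ref{lem:Whitney}), Thom's first isotopy lemma to trivialize $\pi_{k+1}$ over the interval between critical values, a retraction built from the trivializing homeomorphism, passage from $S^\star(\bar\eps)$ to the extensions of the unperturbed sub-level sets via Lemma~\ref{lem:monotone}, and finally a Tarski--Seidenberg transfer back to $\R$. However, the point you flag at the end as ``the main obstacle'' --- making the deformation retractions from Lemma~\ref{lem:monotone} commute with the inclusion $S^\star(\bar\eps)_{\leq a} \subset S^\star(\bar\eps)_{\leq c}$ --- is not actually needed, and the paper avoids it entirely. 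One simply places all four inclusions in a commutative square,
\[
\begin{tikzcd}
S^\star(\bar\eps)_{a}    \arrow[hook]{r} & S^\star(\bar\eps)_{c}  \\
\E(\RR(\phi(\cdot,a)),\R\la\bar\eps\ra) \arrow[hook]{r} \arrow[hook]{u}& \E(\RR(\phi(\cdot,c)),\R\la\bar\eps\ra) \arrow[hook]{u}
\end{tikzcd}
\]
and observes that the top arrow (stage one), and the two vertical arrows (Lemma~\ref{lem:monotone}), are semi-algebraic homotopy equivalences; a three-out-of-four argument in the homotopy category then forces the bottom arrow to be one too. No compatibility between the vertical retractions and the horizontal inclusions is required. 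Your commutation claim about the flow in the proof of Lemma~16.17 of \cite{BPRbook2} is plausible but unverified as you state it, and it is simpler to dispense with it.

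Two further small remarks. First, you should handle the boundary case where $a$ happens to coincide with $t_i$ (which is possible when $t_i \in \R$): the trivialization only exists strictly inside $(t_i,t_{i+1})$, so for $a = t_i$ the paper inserts an extra step using Lemma~\ref{lem:monotone} to show $\RR(\phi(\cdot,t_i)) \hookrightarrow \RR(\phi(\cdot,t_i+\eps))$ is a homotopy equivalence, and composes. Second, since $\widetilde\phi$ already builds in the constraint $P - Y - \eps_{s+1} \leq 0$, the fiber $S^\star(\bar\eps)_t$ and the sub-level set $S^\star(\bar\eps)_{\leq t}$ coincide; your phrasing in terms of sub-level sets and the paper's in terms of fibers are the same object, but it is worth noting this so the reader is not confused.
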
	
	
	\begin{proof}
		The proof is an adaptation of a proof of a similar result in \cite{BV06} (Lemma 3.8), 
		though our situation is much simpler.
		It follows from Lemma~\ref{lem:Whitney} that the semi-algebraic set 
		\[
		\widehat{S^\star(\bar\eps)} := S^\star(\bar\eps) \setminus \pi_{k+1}^{-1}(\{t_0,\ldots,t_N\})
		\] 
		is a Whitney-stratified set.
		Moreover, $\pi_{k+1}|_{\widehat{S^\star(\bar\eps)}}$ is a proper stratified submersion.
		By Thom's first isotopy lemma (in the semi-algebraic version, over real closed fields
		\cite{CS}) the map $\pi_{k+1}|_{\widehat{S^\star(\bar\eps)}}$ is a locally trivial fibration.
		
		Now let $0 \leq i < N$.
		It follows that for $a',b' \in \R\la\bar\eps\ra$ with $t_i < a' \leq b' < t_{i+1}$, 
		that there exists a semi-algebraic homeomorphism 
		\[
		\theta_{a',b'}: S^\star(\bar\eps)_{[a',b']}  \rightarrow S^\star(\bar\eps)_{a'} \times [a',b']
		\]
		such that the following diagram commutes.
		\[
		\xymatrix{
			S^\star(\bar\eps)_{[a',b']} \ar[rr]^{\theta_{a',b'}}\ar[rd]^{\pi_{k+1}} && S^\star(\bar\eps)_{a'}  \times [a',b'] \ar[ld]_{\pi_{k+1}}\\
			&\R\la\bar\eps\ra&
		}
		\]
		
		Let 
		\[
		r_{a',b'}: S^\star(\bar\eps)_{b'} \times [a',b'] \rightarrow  S^\star(\bar\eps)_{a'},
		\]
		be the map defined by 
		\begin{eqnarray*}
			r_{a',b'}(x,t) &=&  \pi_{[1,k]}\circ \theta_{a',b'}(x,t) \mbox{ if } t \leq P(x), \\
			&=& x, \mbox{ else}.
		\end{eqnarray*}
		
		Notice, $r_{a',b'}$ is a semi-algebraic continuous map, and moreover for $x \in S^\star(\bar\eps)_{a'}$, $r_{a',b'}(x,a') = x$. Thus, $r_{a',b'}$ is a semi-algebraic deformation  retraction of $ S^\star(\bar\eps)_{b'}$ to $S^\star(\bar\eps)_{a'}$. 
		
		This implies that the inclusion 
		\begin{equation}
			\label{eqn:prop:G:1}
			S^\star(\bar\eps)_{a'} \hookrightarrow S^\star(\bar\eps)_{b'}
		\end{equation}
		is a semi-algebraic homotopy equivalence.

		Now suppose that $a,b \in \R$ with $t_i < a \leq b < t_{i+1}$. 
		$S^\star(\bar\eps)_{a}$ and $S^\star(\bar\eps)_{b}$ are closed and bounded over $\R$, and
		that $S^\star(\bar\eps)_{a} \searrow \RR(\phi(\cdot,a))$,
		$S^\star(\bar\eps)_{b} \searrow \RR(\phi(\cdot,b))$.
		
		Then, it follows from Lemma~\ref{lem:monotone} that 
		the inclusions,
		\begin{equation}
			\label{eqn:prop:G:2}
			\E(\RR(\phi(\cdot,a)),\R\la\bar\eps\ra) \hookrightarrow S^\star(\bar\eps)_{a},
		\end{equation}
		and
		\begin{equation}
			\label{eqn:prop:G:3}
			\E(\RR(\phi(\cdot,b)),\R\la\bar\eps\ra) \hookrightarrow S^\star(\bar\eps)_{b},
		\end{equation}
		are semi-algebraic homotopy equivalences.
		
		Thus, we have the following commutative diagram of inclusions
		
		\[
		\begin{tikzcd}        
			S^\star(\bar\eps)_{a}    \arrow[hook]{r} & S^\star(\bar\eps)_{b}  \\ 
			\E(\RR(\phi(\cdot,a)),\R\la\bar\eps\ra) \arrow[hook]{r} \arrow[hook]{u}& \E(\RR(\phi(\cdot,b)),\R\la\bar\eps\ra)
			\arrow[hook]{u}
		\end{tikzcd}
		\]
		
		in which all arrows other than the bottom inclusion are semi-algebraic homotopy equivalences, and hence so
		is the bottom arrow.
		This implies that the inclusion
		$\RR(\phi(\cdot,a)) \hookrightarrow \RR(\phi(\cdot,b))$ is a semi-algebraic homotopy equivalence by an application of the Tarski-Seidenberg transfer principle (see for example \cite[Chapter 2]{BPRbook2}).
		
		Now assume that $a = t_i$. Using Lemma~\ref{lem:monotone} we have that for all small enough $\eps>0$,
		the inclusion $\RR(\phi(\cdot,a)) \hookrightarrow \RR(\phi(\cdot,a+\eps))$ is a semi-algebraic homotopy equivalence. Moreover, from what has been already shown, the inclusion
		$\RR(\phi(\cdot,a+\eps)) \hookrightarrow \RR(\phi(\cdot,c))$ is a semi-algebraic homotopy equivalence.
		It now follows that $\RR(\phi(\cdot,a)) \hookrightarrow \RR(\phi(\cdot,c))$ is a semi-algebraic homotopy equivalence. This completes the proof.
	\end{proof}

	\begin{lemma}
		\label{lem:removal}
		Let $\mathcal{G} \subset \R[\bar\eps][T]$ be a finite set  of non-zero polynomials and 
		\[
		\{t_0,\ldots,t_N\} \subset \bigcup_{G \in \mathcal{G}}  \ZZ(G,\R\la\bar\eps\ra)
		\]
		with $t_0 < \cdots < t_N$. 
		For $G \in \mathcal{G}$,
		let $G  = \sum_\alpha m_{G,\alpha} G_\alpha$, with $G_\alpha \in\R[T], m_{G,\alpha} \in \R[\bar\eps]$, and let $M(G) = \{\alpha \mid m_{G,\alpha} \neq 0 \}$.
		Let $\mathcal{H} = \bigcup_{G \in \mathcal{G}, \alpha \in M(P)}  \{G_\alpha\}$, and 
		let 
		\[
		\{s_0,\ldots,s_M\} = \bigcup_{H \in \mathcal{H}} \ZZ(H,\R)
		\] 
		with $s_0 < s_1 < \cdots < s_M$.
		Then, for each $i, 0 \leq i < M$, 
		there exists $j, 0 \leq j < N$, such that 
		$(s_i,s_{i+1}) \subset \R$ is contained in $(t_j,t_{j+1}) \cap \R$.
	\end{lemma}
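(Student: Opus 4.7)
The plan is to argue by contrapositive. Suppose $(s_i,s_{i+1})$ is not contained in any $(t_j,t_{j+1})\cap\R$. Then some $t_k$ must have reals of $(s_i,s_{i+1})$ strictly on both sides in the order of $\R\la\bar\eps\ra$. A short case analysis on $a:=\lim_{\bar\eps} t_k$ shows this forces $a\in(s_i,s_{i+1})$ strictly: if $a\leq s_i$, $a\geq s_{i+1}$, or $a$ is unbounded over $\R$, then the non-infinitesimal difference $r-a$ dominates the infinitesimal $t_k-a$ for every $r\in(s_i,s_{i+1})$, placing the whole real interval on one side of $t_k$. So the task reduces to ruling out the existence of a root $t_k$ of some $G\in\mathcal{G}$ whose limit lies strictly inside $(s_i,s_{i+1})$.

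Fix such a $G$ and write $G(\bar\eps,T)=\sum_{\alpha\in M(G)}\bar\eps^\alpha G_\alpha(T)$. By construction every $G_\alpha$ with $\alpha\in M(G)$ belongs to $\mathcal{H}$, and since $a\in(s_i,s_{i+1})$ avoids $\bigcup_{H\in\mathcal{H}}\ZZ(H,\R)=\{s_0,\ldots,s_M\}$, we have $G_\alpha(a)\neq 0$ for every $\alpha\in M(G)$. Set $u=t_k-a$, which is infinitesimal. A Taylor expansion of $G$ in $T$ around $T=a$ gives
\[
0=G(\bar\eps,a+u)=\sum_{r\geq 0}A_r\,u^r,\qquad A_r=\sum_{\alpha\in M(G)}\bar\eps^\alpha\,\frac{G_\alpha^{(r)}(a)}{r!}\in\R[\bar\eps].
\]
I would then show that this right-hand side is nonzero by comparing $\bar\eps$-orders of the summands, which is the desired contradiction.

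The key step is a valuation estimate. Equip $\R\la\bar\eps\ra$ with the non-archimedean valuation $\nu$ valued in $\mathbb{Q}^{s+2}$ ordered lexicographically to encode the hierarchy $0<\eps_{s+1}\ll\cdots\ll\eps_0\ll 1$, so that larger $\nu$ corresponds to smaller positive elements. Since the monomials $\bar\eps^\alpha$ are $\R$-linearly independent and all $G_\alpha(a)\neq 0$, the element $A_0$ is a nonzero polynomial in $\bar\eps$ with $\nu(A_0)=\alpha^\ast:=\min_{\mathrm{lex}} M(G)$. Each $A_r$ for $r\geq 1$ is supported on the same index set $M(G)$, so $\nu(A_r)\geq\alpha^\ast$; combined with $\nu(u)>0$ this yields $\nu(A_r u^r)>\nu(A_0)$. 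By the ultrametric inequality $\nu\bigl(\sum_r A_r u^r\bigr)=\nu(A_0)<\infty$, contradicting $G(\bar\eps,t_k)=0$.

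The main obstacle is the bookkeeping with the multi-variable valuation on the iterated Puiseux field $\R\la\eps_0\ra\la\eps_1\ra\cdots\la\eps_{s+1}\ra$: one must verify that the lex-minimum of $M(G)$ genuinely controls the leading $\bar\eps$-order of $A_0$ and that the ultrametric inequality survives in the lex-ordered value group. Both become routine once the nested Puiseux construction is unfolded one infinitesimal at a time, but the ordering conventions need to be tracked carefully to close the argument.
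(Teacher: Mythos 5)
Your proof is correct and follows essentially the same route as the paper: both reduce the lemma to showing that no root $t_k$ of any $G\in\mathcal{G}$ can have $\lim_{\bar\eps}t_k$ strictly inside $(s_i,s_{i+1})$, using the fact that such an $a=\lim_{\bar\eps}t_k$ would avoid $\bigcup_{H\in\mathcal{H}}\ZZ(H,\R)$, so that $G_\alpha(a)\neq 0$ for every $\alpha\in M(G)$. Your Taylor-expansion/valuation argument simply makes explicit the step that the paper compresses into the assertion that a root with limit in $(s_i,s_{i+1})$ ``contradicts the fact that the sign condition realized by $\mathcal{G}$ at $t$ stays fixed\ldots since $G$ is a non-zero polynomial'': since the support of $A_0=G(\bar\eps,a)$ is exactly $M(G)$ while every $A_r$ is supported inside $M(G)$, the dominant $\bar\eps$-term of $A_0$ cannot be cancelled by the infinitesimally small correction terms, so $G(\bar\eps,t_k)\neq 0$. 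The only remaining bookkeeping (fixing the lexicographic convention compatible with $\eps_{s+1}\ll\cdots\ll\eps_0$) is the point you flag yourself and is routine.
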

	
	\begin{proof}
		Notice that it follows from the definition of the set $\{s_0,\ldots,s_M\}$ that 
		for any $i, 0 \leq i < M$, the sign condition (cf. Definition~\ref{def:sign-condition})
		realized by $\mathcal{H}$ at $t$
		stays fixed for  all $t \in \R$, 
		such that  $t \in (s_i,s_{i+1})$.
		
		Since for any $t \in \R$, the sign condition realized by $\mathcal{H}$ at $t$
		determines the
		sign condition of $\mathcal{G}$ realized at $t$, it follows that the
		the sign condition (cf. Definition~\ref{def:sign-condition})
		realized by $\mathcal{G}$ at $t$
		also stays fixed for  all $t \in \R$, 
		such that  $t \in (s_i,s_{i+1})$.
		
		Suppose that $t' \in \E((s_i,s_{i+1}),\R\la\bar\eps\ra)$ such that $G(t') = 0$ for some $G \in \mathcal{G}$. We claim that this implies that $\lim_{\bar\eps} t' \in \{s_i,s_{i+1}\}$. Suppose not.
		Then, $\lim_{\bar\eps} t' \in (s_i,s_{i+1})$, which contradicts the fact that 
		the sign condition (cf. Definition~\ref{def:sign-condition})
		realized by $\mathcal{G}$ at $t$
		stays fixed for  all $t \in \R$, 
		such that  $t \in (s_i,s_{i+1})$, since $G$ is a non-zero polynomial.

		The lemma now follows from the hypothesis that 
		$\{t_0,\ldots,t_N\} \subset \bigcup_{G \in \mathcal{G}} \ZZ(G,\R\la\bar\eps\ra)$.
	\end{proof}

	Let $S=\RR(\Phi)$ and $P, t_0,\ldots,t_N$ as in Proposition~\ref{prop:G}, and let $\mathcal{G}, \mathcal{H}$, and 
	$s_0 < \cdots < s_M$ as in Lemma~\ref{lem:removal}.
	Let $s_{-1} = -\infty, s_{M+1} = \infty$. Let 
	$\mathcal{F}$ denote the finite filtration of semi-algebraic sets,
	indexed by the finite ordered set $T = \{s_i \mid -1 \leq i \leq M+1 \}$,
	with the element of $\mathcal{F}$ indexed by $s_i$ equal to $S_{P \leq s_i}$.
	We have the following proposition.

	\begin{proposition}
		\label{prop:G2}
		For each $p \geq 0$,
		\[
		\mathcal{B}_p(S,P) = \mathcal{B}_p(\mathcal{F}).
		\]
	\end{proposition}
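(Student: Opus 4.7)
The strategy is to combine Proposition~\ref{prop:G} and Lemma~\ref{lem:removal} to show that $\mathcal{F}(S,P)$ is homotopy-constant on each half-open interval $[s_i,s_{i+1})$, and then to compute directly from Definitions~\ref{def:barcode_subspace} and \ref{def:barcode_subquotients} that the persistent subquotients $P^{s,t}_p(\mathcal{F}(S,P))$ vanish off the finite grid $T$ and agree with those of $\mathcal{F}$ on $T$.

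First I would verify the key homotopy-constancy statement: for every $-1\leq i\leq M$ and every $t\in[s_i,s_{i+1})\cap\R$, the inclusion $S_{P\leq s_i}\hookrightarrow S_{P\leq t}$ is a semi-algebraic homotopy equivalence. By Lemma~\ref{lem:removal} the open interval $(s_i,s_{i+1})$ is contained in some $(t_j,t_{j+1})\cap\R$, so Proposition~\ref{prop:G} applies (the left-endpoint case $a=t_j$ having already been treated at the end of its proof via Lemma~\ref{lem:monotone}); the boundary cases $i=-1$ (both sets empty) and $i=M$ (both equal to $S$, since $S$ is bounded) are immediate. Passing to rational homology, the inclusion-induced map $i^{s,t}_p$ is an isomorphism whenever $s,t$ lie in a common $[s_i,s_{i+1})$.

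Second, using these isomorphisms I would show that every nontrivial bar of $\mathcal{F}(S,P)$ is supported on pairs in $T\times(T\cup\{\infty\})$. If $s\in(s_i,s_{i+1})$ strictly and $s<t$, choosing $s'=s_i$ in the defining union of $M^{s,t}_p$ and factoring $i^{s_i,t}_p=i^{s,t}_p\circ i^{s_i,s}_p$ with $i^{s_i,s}_p$ an isomorphism gives $\HH^{s_i,t}_p=\HH^{s,t}_p$, so $M^{s,t}_p=\HH_p(S_{P\leq s})$; inserting $t'=s$ and $s'=s_i$ into the union for $N^{s,t}_p$ produces the matching lower bound $N^{s,t}_p\supseteq\HH^{s_i,s}_p=\HH_p(S_{P\leq s})$, so $P^{s,t}_p=0$. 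The case $t=\infty$ with $s$ interior follows from $M^{s,t}_p=\HH_p(S_{P\leq s})$ for all $t\geq s$, which makes the denominator in $P^{s,\infty}_p$ the whole space. A symmetric argument using approximation of $t$ from below by $t'\in[s_i,t)$ together with the isomorphism $i^{s_i,t'}_p$ handles the case $s\in T$, $t\in(s_i,s_{i+1})$.

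Third, for $s_j\in T$ and $s_k\in T\cup\{\infty\}$ with $s_j\leq s_k$ I would identify $M^{s_j,s_k}_p(\mathcal{F}(S,P))$ and $N^{s_j,s_k}_p(\mathcal{F}(S,P))$ with their counterparts for $\mathcal{F}$. The union $\bigcup_{s'<s_j}\HH^{s',s_k}_p$ is already attained when $s'$ is restricted to $[s_{j-1},s_j)$---on that sub-interval the isomorphism property forces $\HH^{s',s_k}_p=\HH^{s_{j-1},s_k}_p$---so it equals the finite-filtration value $\HH^{s_{j-1},s_k}_p$. Partitioning the range $t'\in[s_j,s_k)\cap\R$ in the definition of $N^{s_j,s_k}_p$ into sub-intervals $[s_\ell,s_{\ell+1})$ and factoring $i^{s_j,t'}_p=i^{s_\ell,t'}_p\circ i^{s_j,s_\ell}_p$ with $i^{s_\ell,t'}_p$ an isomorphism shows that the contribution of each sub-interval is independent of $t'$ and equals $(i^{s_j,s_\ell}_p)^{-1}(\HH^{s_{j-1},s_\ell}_p)$; the double union in $N$ therefore collapses to the finite-filtration expression. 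For $s_k=\infty$ one additionally uses monotonicity of $M^{s_j,t}_p$ in $t$ to replace the supremum with $M^{s_j,s_{M+1}}_p$. These identifications give $P^{s_j,s_k}_p(\mathcal{F}(S,P))=P^{s_j,s_k}_p(\mathcal{F})$ and $P^{s_j,\infty}_p(\mathcal{F}(S,P))=P^{s_j,\infty}_p(\mathcal{F})$, hence $\mathcal{B}_p(\mathcal{F}(S,P))=\mathcal{B}_p(\mathcal{F})$. I expect the main technical obstacle to be the bookkeeping in this last step: verifying that the double union defining $N^{s_j,s_k}_p$ really stabilizes as $t'$ traverses several adjacent sub-intervals requires carefully tracking how preimages under composite inclusion maps behave under the isomorphisms $i^{s_\ell,t'}_p$, and checking the correct monotonicity of these preimages in $\ell$.
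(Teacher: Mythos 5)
Your proposal follows essentially the same approach as the paper's proof. The paper also (i) establishes homotopy-constancy on each half-open interval $[s_i,s_{i+1})$ via Proposition~\ref{prop:G}, Lemma~\ref{lem:removal} and Lemma~\ref{lem:monotone}; (ii) proves a first claim that $\mu^{s,t}_p(\mathcal{F}(S,P))\neq 0$ forces $s,t$ onto the finite grid, by a two-case computation of $M^{s,t}_p$ and $N^{s,t}_p$ split according to whether $s$ or $t$ is interior; and (iii) proves a second claim that $M^{s_i,s_j}_p$ and $N^{s_i,s_j}_p$ for $\mathcal{F}(S,P)$ coincide with their finite-filtration counterparts, using the same identifications through the isomorphisms $i^{s',s_i}_p$ and $i^{t',s_j}_p$ that you invoke. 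The only cosmetic difference is in how the unions defining $N$ are collapsed: you produce a single lower-bound term (via $s'=s_i$, $t'=s$) to force $M=N$ in case (ii) and partition $t'$ into subintervals in case (iii), whereas the paper computes the unions directly using the monotonicity of the preimages $(i^{s,t'}_p)^{-1}(\HH^{s',t'}_p)$ in $t'$ to show the entire union stabilizes at the value attained on the last subinterval $[s_{k-1},s_k)$ --- both routes are correct and the "bookkeeping obstacle" you anticipate resolves exactly this way.
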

	
	\begin{proof}
		It follows from  Proposition~\ref{prop:G} and Lemma~\ref{lem:removal} that for each $i, -1 \leq i \leq  M$ and 
		$s \in (s_i,s_{i+1})$, the inclusion $S_{P \leq s_i} \hookrightarrow S_{P \leq s}$ is a semi-algebraic homotopy
		equivalence.
		
		The proposition 
		will now follow from the following two claims.
		\begin{claim}
			\label{claim:proof:cor:G:1}
			Suppose that $s,t \in [s_{-1}, s_{M+1}], s \leq t$. 
			Then, $\mu_p^{s,t}(\mathcal{F}(S,P)) \neq 0 \Rightarrow s,t \in \{s_{-1},\ldots,s_{M+1}\}$.
		\end{claim}
		
		\begin{proof}
			We consider the following two cases.
			\begin{enumerate}[1.]
				\item $s \not\in \{s_{-1},\ldots,s_{M+1}\}$: Without loss of generality we can assume
				that $s \in (s_i,s_{i+1})$ for some $i, -1 \leq i \leq M$.  Now the inclusion 
				$S_{P \leq s'} \hookrightarrow S_{P \leq s}$,
				is a semi-algebraic homotopy equivalence for all $s' \in [s_{i},s)$,\and hence $i^{s',s}_p$ is an isomorphism
				for all  $s' \in [s_{i},s)$.
				
				It follows that for all $s' \in [s_i,s)$,  
				\[
				\HH^{s',t}_p(\mathcal{F}(S,P)) = \mathrm{Im}(i_p^{s',t}) = \mathrm{Im}(i_p^{s,t}\circ i_p^{s',s}) = \mathrm{Im}(i^{s,t}_p) = \HH^{s,t}_p(\mathcal{F}(S,P)),
				\]
				which implies that
				\[
				(i^{s,t}_p)^{-1}(\HH^{s',t}_p(\mathcal{F}(S,P))) = (i^{s,t}_p)^{-1}(\HH^{s,t}_p(\mathcal{F}(S,P))) =  \HH_p(S_{P \leq s}).
				\]
				
				Noting that
				\[
				\bigcup_{s' < s} (i^{s,t}_p)^{-1}(\HH^{s',t}_p(\mathcal{F}(S,P))) = \bigcup_{s' \in [s_i,s)} (i^{s,t}_p)^{-1}(\HH^{s',t}_p(\mathcal{F}(S,P))),
				\]
				it now follows that
				\begin{eqnarray*}
					M^{s,t}_p(\mathcal{F}(S,P)) &=& \bigcup_{s' < s} (i^{s,t}_p)^{-1}(\HH^{s',t}_p(\mathcal{F}(S,P))) \\
					&=& \HH_p(S_{P \leq s}), \\
					N^{s,t}_p(\mathcal{F}(S,P)) &=& \bigcup_{s' < s\leq t' < t} (i^{s,t'}_p)^{-1}(\HH^{s',t'}_p(\mathcal{F}(S,P))) \\
					&=&
					\bigcup_{s\leq t' < t} (i^{s,t'}_p)^{-1}(\HH^{s,t'}_p(\mathcal{F}(S,P))) \\
					&=& \HH_p(S_{P \leq s}).
				\end{eqnarray*}
				
				We have two sub-cases to consider.
				\begin{enumerate}[(a)]
				    \item 
				    If $t < s_{M+1}$: 
				    \[
				P^{s,t}_p(\mathcal{F}(S,P)) = M^{s,t}_p(\mathcal{F}(S,P))/N^{s,t}_p(\mathcal{F}(S,P)) = 0.
				    \]
				    \item
				    If $t = s_{M+1} = \infty$:
				    \[
				P^{s,\infty}_p(\mathcal{F}(S,P)) =  \HH_p(S_{P \leq s}) / \bigcup_{s \leq t} M^{s,t}_p(\mathcal{F}(S,P)) = 0.
				\]
				since 
				\[
				\bigcup_{s \leq t} M^{s,t}_p(\mathcal{F}(S,P)) = \bigcup_{s \leq t} \HH_p(S_{P \leq s}) = \HH_p(S_{P \leq s}).
				\]
				\end{enumerate}

				\item
				$t \not\in \{s_{-1},\ldots,s_{M+1}\}$: Without loss of generality we can assume
				that $t \in (s_i,s_{i+1})$ for some $i, -1 \leq i \leq M$.
				The inclusion $S_{P \leq t'} \hookrightarrow S_{P \leq t}$,
				is a semi-algebraic homotopy equivalence for all $t' \in [s_i,t)$, and
				hence $i_p^{t',t}$ is an isomorphism for all $t' \in [s_i,t)$.
				This implies that for all $t' \in [s_i,t)$, and $s' < t'$,
				$\mathrm{Im}(i^{s',t'}_p)$ can be identified with $\mathrm{Im}(i^{s',t}_p)$ using the isomorphism
				$i^{t',t}_p$.
				Furthermore, it is easy to verify that for every fixed $s'<s$ and $s \leq t' \leq t''$,
				\[
				(i^{s,t'}_p)^{-1}(\HH^{s',t'}_p(\mathcal{F}(S,P))) \subset (i^{s,t'}_p)^{-1}(\HH^{s',t''}_p(\mathcal{F}(S,P))),
				\]
				and hence for each fixed $s' < s$,
				\[
				\bigcup_{s\leq t' < t} (i^{s,t'}_p)^{-1}(\HH^{s',t'}_p(\mathcal{F}(S,P))) 
				=
				\bigcup_{s_i < t' < t} (i^{s,t'}_p)^{-1}(\HH^{s',t'}_p(\mathcal{F}(S,P))).
				\]
				
				It follows that for $t \in (s_i,s_{i+1})$
				\begin{eqnarray*}
					N^{s,t}_p(\mathcal{F}(S,P)) &=& \bigcup_{s' < s\leq t' < t} (i^{s,t'}_p)^{-1}(\HH^{s',t'}_p(\mathcal{F}(S,P))) \\
					&=& \bigcup_{s' < s} (i^{s,t}_p)^{-1}(\HH^{s',t}_p(\mathcal{F}(S,P))) \\
					&=&  M^{s,t}_p(\mathcal{F}(S,P)).
				\end{eqnarray*}
				
			We have
				\[
				P^{s,t}_p(\mathcal{F}(S,P)) = M^{s,t}_p(\mathcal{F}(S,P))/N^{s,t}_p(\mathcal{F}(S,P))  = 0.
				\]
			\end{enumerate}
			This completes the proof.
		\end{proof}
		
		\begin{claim}
			\label{claim:proof:cor:G:2}
			For each $i,j, -1 \leq i \leq j \leq M+1$, 
			$\mu_p^{s_i,s_j}(\mathcal{F}(S,P)) = \mu_p^{s_i,s_j}(\mathcal{F})$.
		\end{claim}
		
		\begin{proof}
			It suffices to prove that 
			\begin{eqnarray*}
				M^{s_i,s_j}_p(\mathcal{F}(S,P)) &=& M^{s_i,s_j}_p(\mathcal{F}), \\
				N^{s_i,s_j}_p(\mathcal{F}(S,P)) &=& N^{s_i,s_j}_p(\mathcal{F}).
			\end{eqnarray*}
			
			To prove the first equality we use the fact that
			$s' \in [s_{i-1},s_{i})$, the inclusion $S_{P \leq s_{i-1}} \hookrightarrow S_{P \leq s'}$ is a semi-algebraic homotopy equivalence.
			
			Hence,
			\begin{eqnarray*}
				M^{s_i,s_j}_p(\mathcal{F}(S,P)) &=& \bigcup_{s' < s_i} (i^{s_i,s_j}_p)^{-1}(\HH^{s',s_j}_p(\mathcal{F}(S,P))) \\
				&=& (i^{s_{i},s_j}_p)^{-1}(\HH^{s_{i-1},s_j}_p(\mathcal{F}(S,P))) \\
				&=& M^{s_i,s_j}_p(\mathcal{F}).
			\end{eqnarray*}
			
			Using additionally the fact that 
			$t' \in [s_{j-1},s_{j})$, the inclusion $S_{P \leq s_{j-1}} \hookrightarrow S_{P \leq t'}$ is a semi-algebraic homotopy equivalence, we have: 
			\begin{eqnarray*}
				N^{s_i,s_j}_p(\mathcal{F}(S,P)) &=& \bigcup_{s' < s_i\leq t' < s_j} (i^{s_i,t'}_p)^{-1}(\HH^{s',t'}_p(\mathcal{F}(S,P))) \\
				&=& (i^{s_i,s_{j-1}}_p)^{-1}(\HH^{s_{i-1},s_{j-1}}_p(\mathcal{F}(S,P)))\\
				&=& N^{s_i,s_j}_p(\mathcal{F}).
			\end{eqnarray*}
		\end{proof}
		This concludes the proof of Proposition~\ref{prop:G2}.
	\end{proof}
	
	\begin{proof}[Proof of Proposition~\ref{prop:finite}]
		Follows immediately from Proposition~\ref{prop:G2}.
	\end{proof}
	
	\subsection{Persistent multiplicities for finite filtration}
	\label{subsec:multiplicities}
		In this section, we prove a formula for the persistent multiplicities associated to a finite filtration $\mathcal{F}$, which we later use in Algorithm~\ref{alg:barcode-simplicial} to obtain the barcodes of a finite filtration. 
	We deduce the formula from our definition of persistent multiplicity (cf. Eqn.~\eqref{eqn:def:barcode:multiplicity} in  Definition~\ref{def:barcode_multiplicity}).~\footnote{This formula already appears in \cite[page 152]{Edelsbrunner-Harer}, but what is
		meant by ``independent $p$-dimensional classes that are born at $K_i$, and die entering $K_j$''
		\emph{loc. cit.}  is not totally transparent. See also Remark~\ref{rem:subspaces}.}
	
	\begin{proposition}
		\label{prop:finite:multiplicity}
		Let $\mathcal{F}$ denote a finite filtration, given by $ X_0 \subset \cdots \subset X_{M} = X_{M+1}= \cdots = X$, 
		such that rank of $\HH_p(X_j)$ is finite for each $p \geq 0$. Then for $0 <j < k$, 
		
		\begin{equation}
			\label{eqn:prop:finite:multiplicity}
			\mu^{j,k}_p(\mathcal{F}) = 	
			\left\{
			\begin{array}{ll}
				(b^{j, {k-1}}_p (\mathcal{F}) - b^{j, k}_p
				(\mathcal{F})) - (b^{{j-1},{k-1}}_p (\mathcal{F}) - b^{{j-1}, k}_p(\mathcal{F})), & k < \infty, \\ 
				& \\
				b^{j, k}_p
				(\mathcal{F}) - b^{{j-1},{k}}_p (\mathcal{F}), & k=\infty. \\
			\end{array} 
			\right.		
		\end{equation}
		\\
	\end{proposition}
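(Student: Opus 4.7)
The plan is to reduce the two unions defining $M^{j,k}_p(\mathcal{F})$ and $N^{j,k}_p(\mathcal{F})$ to single preimages, and then apply rank-nullity to the relevant homomorphisms in order to convert everything into a combination of the numbers $b^{s,t}_p(\mathcal{F})$. The only nonroutine step is checking that the two unions are in fact nested in the right way; after that the computation is mechanical.

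First I would show that for a finite filtration, both unions collapse. For the $M$ term: if $s' \leq j-1$, then since $X_{s'}\subset X_{j-1}$ and $i^{s',k}_p = i^{j-1,k}_p\circ i^{s',j-1}_p$, we have $\HH^{s',k}_p(\mathcal{F}) \subset \HH^{j-1,k}_p(\mathcal{F})$, so
\[
M^{j,k}_p(\mathcal{F}) \;=\; (i^{j,k}_p)^{-1}\bigl(\HH^{j-1,k}_p(\mathcal{F})\bigr).
\]
For the $N$ term I would verify the monotonicity in $t'$: if $s \leq t'\leq t''$ and $\alpha\in(i^{j,t'}_p)^{-1}(\HH^{s',t'}_p(\mathcal{F}))$, write $i^{j,t'}_p(\alpha)=i^{s',t'}_p(\beta)$ and apply $i^{t',t''}_p$ to both sides to land $i^{j,t''}_p(\alpha)$ in $\HH^{s',t''}_p(\mathcal{F})$. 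Combined with the monotonicity in $s'$ already used, this gives
\[
N^{j,k}_p(\mathcal{F}) \;=\; (i^{j,k-1}_p)^{-1}\bigl(\HH^{j-1,k-1}_p(\mathcal{F})\bigr).
\]

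Next I would compute the dimensions. Since $\HH^{j-1,k}_p(\mathcal{F})=\mathrm{Im}(i^{j,k}_p\circ i^{j-1,j}_p)\subset \mathrm{Im}(i^{j,k}_p)$, the preimage formula gives
\[
\dim M^{j,k}_p(\mathcal{F}) \;=\; \dim\ker(i^{j,k}_p) + \dim\HH^{j-1,k}_p(\mathcal{F}) \;=\; \bigl(b^{j,j}_p(\mathcal{F}) - b^{j,k}_p(\mathcal{F})\bigr) + b^{j-1,k}_p(\mathcal{F}),
\]
using rank-nullity on $i^{j,k}_p\colon \HH_p(X_j)\to \HH_p(X_k)$. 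The identical argument applied to $(j,k-1)$ in place of $(j,k)$ yields
\[
\dim N^{j,k}_p(\mathcal{F}) \;=\; \bigl(b^{j,j}_p(\mathcal{F}) - b^{j,k-1}_p(\mathcal{F})\bigr) + b^{j-1,k-1}_p(\mathcal{F}).
\]
Subtracting (and using that $N^{j,k}_p(\mathcal{F}) \subset M^{j,k}_p(\mathcal{F})$, which is part of Remark~\ref{rem:inclusion}) gives
\[
\mu^{j,k}_p(\mathcal{F}) \;=\; \dim M^{j,k}_p(\mathcal{F}) - \dim N^{j,k}_p(\mathcal{F}) \;=\; \bigl(b^{j,k-1}_p(\mathcal{F}) - b^{j,k}_p(\mathcal{F})\bigr) - \bigl(b^{j-1,k-1}_p(\mathcal{F}) - b^{j-1,k}_p(\mathcal{F})\bigr),
\]
proving the first case.

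For $k=\infty$, I would use the monotonicity $M^{j,t}_p(\mathcal{F})\subset M^{j,t'}_p(\mathcal{F})$ for $t\leq t'$ (same argument as for $N$ above) together with the fact that $X_t=X_M$ for all $t\geq M$, which forces $\bigcup_{j\leq t} M^{j,t}_p(\mathcal{F}) = M^{j,M}_p(\mathcal{F})$. Then
\[
\dim P^{j,\infty}_p(\mathcal{F}) \;=\; \dim \HH_p(X_j) - \dim M^{j,M}_p(\mathcal{F}) \;=\; b^{j,j}_p(\mathcal{F}) - \bigl(b^{j,j}_p(\mathcal{F})-b^{j,M}_p(\mathcal{F})+b^{j-1,M}_p(\mathcal{F})\bigr),
\]
which simplifies to $b^{j,\infty}_p(\mathcal{F})-b^{j-1,\infty}_p(\mathcal{F})$ under the evident convention $b^{s,\infty}_p(\mathcal{F})=b^{s,M}_p(\mathcal{F})$. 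The main (and only) thing to be careful about is the nesting of the two unions; once this is in hand the rest is elementary linear algebra.
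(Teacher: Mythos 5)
Your proof is correct and follows essentially the same route as the paper: collapse the two unions to single preimages using monotonicity, then apply rank-nullity to compute $\dim M^{j,k}_p$ and $\dim N^{j,k}_p$ and subtract. The only cosmetic difference is that you invoke the preimage-dimension formula directly on $i^{j,k}_p$ rather than factoring through the surjection $g = m\circ f$ onto $\HH^{j,k}_p(\mathcal{F})/\HH^{j-1,k}_p(\mathcal{F})$ as the paper does, and you spell out the $t'$-monotonicity for $N^{j,k}_p$ which the paper leaves implicit; both are the same linear algebra.
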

	
	\begin{proof}
		We first prove the case where $k$ is finite. By Definition~\ref{def:barcode_subquotients},
		\begin{eqnarray*}
			\mu^{j,k}_p(\mathcal{F}) &=& \dim P^{j,k}_p(\mathcal{F}) \\
			&=& \dim M^{j,k}_p(\mathcal{F}) - \dim N^{j,k}_p(\mathcal{F}).
		\end{eqnarray*}
		
		Since $\mathcal{F}$ is finite, we have
		\begin{eqnarray*}
			M^{j,k}_p(\mathcal{F}) &=& (i^{j,k}_p)^{-1}(\HH^{j-1,k}_p(\mathcal{F})), \\
			N^{j,k}_p(\mathcal{F}) &=& (i^{j,k-1}_p)^{-1}(\HH^{j-1,k-1}_p(\mathcal{F})).
		\end{eqnarray*}	
		Note that  $(i^{j,k}_p)^{-1}(\HH^{j-1,k}_p(\mathcal{F}))$ is a subspace of  $\HH_p(X_j)$,
		and  hence the linear map $i^{j,k}: \HH_p(X_j) \rightarrow \HH_p(X_k)$ factors through a
		surjection $f: \HH_p(X_j) \rightarrow \HH^{j,k}_p(\mathcal{F})$ followed by an injection
		$\HH^{j,k}_p(\mathcal{F}) \hookrightarrow \HH_p(X_k)$ as shown in the following diagram.

		\begin{center}
			\begin{tikzcd}[column sep=scriptsize] \HH_p(X_j) \arrow[dr, "f"] \arrow[rr, "i^{j,k}_p"]{}& & \HH_p(X_k) \\& \HH^{j,k}_p(\mathcal{F}) \arrow[hookrightarrow]{ur} 
			\end{tikzcd}.
		\end{center}
		
		Now $\HH^{j-1,k}_p(\mathcal{F})$ is a subspace of $\HH^{j,k}_p(\mathcal{F})$, 
		and let \[
		m:\HH^{j,k}_p(\mathcal{F}) \rightarrow \HH^{j,k}_p(\mathcal{F})/\HH^{j-1,k}_p(\mathcal{F})
		\]
		be the canonical surjection.
		Let $g = m \circ f$. Since $f$ and $m$ are both surjective, so is $g$.
		\begin{center}
			\begin{tikzcd}
				\HH_p(X_j)\ar[r,"f"]\ar[rr,out=-30,in=210,swap,"g \ = \ m \circ f"] & \HH^{j,k}_p(\mathcal{F})\ar[r,"m"] & \HH^{j,k}_p(\mathcal{F}) \ / \ \HH^{j-1,k}_p(\mathcal{F})
			\end{tikzcd},
		\end{center}
		Now notice that
		\begin{eqnarray*}
			M^{j,k}_p(\mathcal{F}) &=& (i^{j,k}_p)^{-1}(\HH^{j-1,k}_p(\mathcal{F})) \\
			&=& f^{-1}(\HH^{j-1,k}_p(\mathcal{F})) \\
			&=& \ker(g).
		\end{eqnarray*}	
		Since $g$ is surjective,
		\[
		\rank(g) = \dim \HH^{j-1,k}_p(\mathcal{F}) -\dim 
		\HH^{j,k}_p(\mathcal{F}),
		\]
		and using the rank-nullity theorem
		we obtain 
		\begin{equation}\label{eq:dim_M}
			\dim M^{j,k}_p(\mathcal{F}) = b_p(X_j) - (b^{j, k}_p(\mathcal{F})- b^{j-1, k}_p(\mathcal{F})).
		\end{equation}
		
		Using a similar argument we obtain
		\begin{equation} \label{eq:dim_N}	
			\dim N^{j,k}_p(\mathcal{F}) = b_p(X_j) - (b^{j, k-1}_p(\mathcal{F})- b^{j-1, k-1}_p(\mathcal{F})).	
		\end{equation}

		Finally,
		\begin{eqnarray*}
			\mu^{j,k}_p(\mathcal{F})
			&=& \dim M^{j,k}_p(\mathcal{F}) - \dim N^{j,k}_p(\mathcal{F})\\
			&=& b^{j-1, k}_p(\mathcal{F}) -b^{j, k}_p(\mathcal{F}) + (b^{j, k-1}_p(\mathcal{F}) -b^{j-1, k-1}_p(\mathcal{F}))  \\	 
			&=& (b^{j, k-1}_p(\mathcal{F}) -b^{j, k}_p(\mathcal{F})) - (b^{j-1, k-1}_p(\mathcal{F}) - b^{j-1, k}_p(\mathcal{F})).
		\end{eqnarray*}
		If $k=\infty$, then by Definition~\ref{def:barcode_subquotients}, 
		\begin{eqnarray*}
			\mu^{j,k}_p(\mathcal{F}) &=& \dim P^{j,k}_p(\mathcal{F}) \\
			&=& \dim \HH_p(K_j) - \dim \bigcup_{j \leq t} M^{j,t}_p(\mathcal{F}).
		\end{eqnarray*}
		Since $M^{s,t}_p(\mathcal{F}) \subset M^{s,t'}_p(\mathcal{F})$ for $t \leq t'$,  we have 
		$$M^{j,t}_p(\mathcal{F}) \subset M^{j,t+1}_p(\mathcal{F}) \subset \cdots \subset M^{j,M}_p(\mathcal{F}) = M^{j,M+1}_p(\mathcal{F})= \cdots = M^{j,\infty}_p(\mathcal{F})$$ \vspace*{-.4cm}
		$$\bigcup_t^\infty M^{j,t}_p(\mathcal{F}) = M^{j,M}_p(\mathcal{F})$$
		Therefore,
		\begin{eqnarray*}
			\mu^{j,k}_p(\mathcal{F}) &=& \dim \HH_p(K_j) - \dim M^{j,M}_p(\mathcal{F}) \\
			&=& b_p(X_j) - (b_p(X_j) - (b^{j, M}_p(\mathcal{F})- b^{j-1, M}_p(\mathcal{F}))) \\
			&=& b^{j, M}_p(\mathcal{F})- b^{j-1, M}_p(\mathcal{F})
		\end{eqnarray*}
	\end{proof}
	
	\section{Algorithms and proof of Theorem~\ref{thm:persistent}}
	\label{sec:finite-algorithm}
	In this section we describe our algorithmic results leading to the
	proof of Theorem~\ref{thm:persistent}.
	We begin by stating some preliminary mathematical results in Section~\ref{subsec:preliminaries} that we will need for our algorithms. We describe two technical algorithms that we will need 
	in Section~\ref{subsec:algo-prelim}. 
	In Section~\ref{subsec:algo-finite} we describe Algorithm~\ref{alg:filtration} for reducing the given continuous filtration to a finite one. The proof of correctness of this algorithm relies on Proposition~\ref{prop:G2} proved earlier.
	Finally, in Section~\ref{subsec:algo-finite} we describe our algorithm for computing the barcode of a semi-algebraic filtration
	(algorithm~\ref{alg:barcode-semi-algebraic}), prove its correctness and analyze its complexity, thereby proving Theorem~\ref{thm:persistent}.

	\subsection{Preliminaries}
	\label{subsec:preliminaries}

	\begin{notation} [Derivatives] 
		\label{not:ders}
		Let $P$ be a univariate polynomial of degree  $p$ in $\R[X]$. We  will denote by $\Der (P)$ the tuple  
		$(P,P',\ldots,P^{(p)})$ of derivatives of $P$.
	\end{notation}
	
	The significance of $\Der(P)$ is encapsulated in the following lemma which underlies
	our representations of elements of $\R$ which  are algebraic over $\D$ 
	(cf. Definition~\ref{10:def:thom}).

	\begin{proposition}[Thom's Lemma]
		\label{prop:Thom}
		Let $f \in \R[X]$ be a univariate polynomial, and, let $\sigma$ be
		a sign condition on $\Der (f)$ Then $\RR ( \sigma )$ is either empty, a
		point, or an open interval.
	\end{proposition}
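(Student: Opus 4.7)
The plan is to prove Thom's Lemma by induction on $p = \deg f$.

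The base case $p = 0$ is immediate: $\Der(f) = (f)$ is a single constant, and $\RR(\sigma)$ is either all of $\R$ (an open interval) or empty depending on whether $\sigma(f)$ matches the sign of the constant.

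For the inductive step, I would reduce the problem from $\Der(f)$ to $\Der(f') = (f', f'', \ldots, f^{(p)})$, which is the derivative tuple of a polynomial of degree $p-1$. Write $\sigma = (\sigma(f), \sigma')$ where $\sigma'$ is the restriction of $\sigma$ to $\Der(f')$. By the induction hypothesis, $\RR(\sigma')$ is empty, a point, or an open interval. Clearly $\RR(\sigma) \subset \RR(\sigma')$, so the first two cases immediately give that $\RR(\sigma)$ is either empty or a single point (or empty).

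The key case is when $\RR(\sigma') = I$ is an open interval. The main observation is that on $I$, the sign of $f'$ is constant and equal to $\sigma(f') \in \{-1,0,1\}$. If $\sigma(f') \neq 0$, then $f$ is strictly monotone on $I$, so $f$ takes each value (in particular $0$) at most once. Consequently $\{x \in I \mid \sign f(x) = \sigma(f)\}$ is either empty, a single point (if $\sigma(f) = 0$), or an open sub-interval of $I$ (one of the two pieces of $I$ cut out by the unique zero of $f$, or all of $I$ if $f$ does not vanish on $I$). If $\sigma(f') = 0$, then $f$ is constant on $I$, so $\RR(\sigma)$ is either all of $I$ or empty. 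In every sub-case we land in the required trichotomy.

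The only mildly delicate point is verifying that the intersection of $I$ with the sign locus of $f$ actually falls into the allowed shapes; this uses the intermediate value theorem (valid in any real closed field) together with strict monotonicity from a nonzero constant sign of $f'$ on $I$, and the fact that the endpoints of $I$ are not included in $\RR(\sigma)$ because $I$ itself is open. I do not foresee a substantial obstacle: the induction on $p$ organizes the argument cleanly, and the whole proof rests on the elementary fact that the sign of $f'$ controls the monotonicity of $f$.
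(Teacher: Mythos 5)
Your proof is correct. The paper itself does not give an argument here---it simply cites \cite[Proposition 2.27]{BPRbook2}---and the induction on $\deg f$ that you carry out is in substance the proof given in that reference: reduce to the sign condition $\sigma'$ on $\Der(f')$, invoke the inductive hypothesis, and in the open-interval case use the constant sign of $f'$ (hence strict monotonicity of $f$, via the polynomial mean value theorem over a real closed field) together with the intermediate value theorem to see that the further constraint $\sign f = \sigma(f)$ cuts $I$ down to the allowed shapes.

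One small remark: your sub-case $\sigma(f') = 0$ with $\RR(\sigma') = I$ a nonempty open interval is in fact vacuous in the inductive step (where $\deg f \geq 1$): a polynomial $f'$ vanishing on an open interval is identically zero, which would force $\deg f \leq 0$. Treating it as you do---noting that $f$ is then constant on $I$, so $\RR(\sigma)$ is $I$ or $\emptyset$---is harmless and still yields the conclusion, but you could simply observe that when $\sigma'(f') = 0$ the set $\RR(\sigma')$ is never an open interval, which tightens the argument slightly.
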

	
	\begin{proof}
		See \cite[Proposition 2.27]{BPRbook2}.
	\end{proof}
	
	Proposition~\ref{prop:Thom} allows us to specify elements of $\R$ which are algebraic over $\D$ by means
	of a pair $(f,\sigma)$ where $f \in \D[X]$ and $\sigma \in \{0,1,-1\}^{\Der(f)}$.
	
	\begin{definition}
		\label{10:def:thom}
		We say that $x \in \R$ is \emph{associated to the pair $(f,\sigma)$},
		if $\sigma(f)=0$ and
		if $\Der(f)$ realizes the sign condition $\sigma$  at $x$.
		We call the pair $(f,\sigma)$ to be a  \emph{Thom encoding} specifying $x$.
	\end{definition}

	We will also use the notion of a weak sign condition (cf. Definition~\ref{def:sign-condition}).
	
	\begin{definition}
		\label{def:weaksign}
		A \emph{weak sign condition} is an element of
		\[ \{\{0\},\{0,1\},\{0, -1\}\} . \]
		We say
		\[ \begin{cases}
			\sign (x) \in \{0\} & \mbox{if and only if } 
			x=0,\\
			\sign (x) \in \{0,1\} & \mbox{if and only if }  x \ge 0,\\
			\sign (x) \in \{0, - 1\} & \mbox{if and only if }  x \le 0.
		\end{cases} \]
		A \emph{weak sign condition} on $\mathcal{Q}$ is an element of
		$\{\{0\},\{0,1\},\{0, - 1\}\}^{\mathcal{Q}}$. If $\sigma \in \{0,1, - 1\}^{\mathcal{Q}}$, its
		\emph{relaxation}
		$\overline{\sigma}$  is the weak sign condition on
		$\mathcal{Q}$ defined by $\overline{\sigma} (Q) = \overline{\sigma (Q)}$.
		The \emph{realization of the weak sign condition $\tau$}
		is
		\[ \RR ( \tau ) = \{x \in \R^{k}   \mid  
		\bigwedge_{Q \in \mathcal{Q}} \sign (Q(x)) \in \tau (Q)\} . \]
	\end{definition}
	
	\begin{definition}
		We say that a set of polynomials $\mathcal{F} \subset \R[X]$ is \emph{closed under differentiation} 
		if $0 \not \in \mathcal{F}$ and if for each $f \in
		\mathcal{F}$ then $f' \in \mathcal{F}$ or $f' =0$. 
	\end{definition}
	
	\begin{lemma}(\cite[Lemma 5.33]{BPRbook2})
		\label{lem:Thom's Lemma} Let
		$\mathcal{F} \subset \R [X]$ be a finite set of polynomials closed under
		differentiation and let $\sigma$ be a sign condition on the set
		$\mathcal{F}$. Then
		\begin{enumerate}[(a)]
			\item $\RR ( \sigma )$ is either empty, a point, or an open interval.
			\item If $\RR ( \sigma )$ is empty, then $\RR ( \overline{\sigma} )$ is
			either empty or a point.
			\item If $\RR ( \sigma )$ is a point, then $\RR ( \overline{\sigma} )$ is
			the same point.
			\item If $\RR ( \sigma )$ is an open interval then $\RR (
			\overline{\sigma} )$ is the corresponding closed interval.
		\end{enumerate}
	\end{lemma}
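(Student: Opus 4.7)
The plan is to prove all four parts simultaneously by induction on $\card(\mathcal{F})$. The base case $\mathcal{F} = \emptyset$ is immediate, since then $\RR(\sigma) = \R$ is an open interval and the remaining claims are vacuous.

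For the inductive step, I would pick $f_0 \in \mathcal{F}$ of maximal degree and set $\mathcal{F}' = \mathcal{F} \setminus \{f_0\}$. Because $\mathcal{F}$ is closed under differentiation, so is $\mathcal{F}'$, and moreover if $\deg(f_0) \geq 1$ then $f_0' \in \mathcal{F}'$. The subcase $\deg(f_0) = 0$ is trivial since $f_0$ is then a nonzero constant, so the realization is either $\RR(\sigma')$ or empty. Assuming $\deg(f_0) \geq 1$, let $\sigma'$ denote the restriction of $\sigma$ to $\mathcal{F}'$. By the induction hypothesis, $\RR(\sigma')$ is empty, a point, or an open interval, and (b)--(d) hold for $\sigma'$. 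When $\RR(\sigma')$ is empty or a single point, all claims for $\sigma$ follow immediately from the containments $\RR(\sigma) \subseteq \RR(\sigma')$ and $\RR(\overline{\sigma}) \subseteq \RR(\overline{\sigma'})$.

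The central case is when $\RR(\sigma') = I$ is an open interval, so that by induction $\RR(\overline{\sigma'}) = \overline{I}$ is the corresponding closed interval. The decisive observation is that $\sigma'(f_0') \neq 0$: otherwise $f_0'$ would vanish on the interval $I$ and hence identically as a polynomial, contradicting $\deg(f_0) \geq 1$. Therefore $f_0'$ has fixed nonzero sign on $I$, making $f_0$ strictly monotone on $I$ with at most one zero there. Consequently $I$ splits into at most three pieces --- two open sub-intervals on which $f_0$ has opposite nonzero signs, separated by at most one interior zero --- and this immediately yields (a) for $\RR(\sigma)$.

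The step I expect to require the most care is verifying (b)--(d), namely showing that $\RR(\overline{\sigma})$ is precisely the closure of $\RR(\sigma)$ inside $\overline{I}$. The idea is that continuity of $f_0$ combined with its strict monotonicity on $I$ forces the sign of $f_0$ at any boundary point (either an interior zero of $f_0$ in $I$ or an endpoint of $\overline{I}$) to lie in the weak relaxation $\overline{\sigma(f_0)}$, while any point outside the purported closure strictly violates $\sigma(f_0)$ or some condition of $\sigma'$. When $\RR(\sigma)$ is empty but $\RR(\overline{\sigma})$ is nonempty, the strict monotonicity of $f_0$ on $I$ limits $\RR(\overline{\sigma})$ to at most one endpoint of $\overline{I}$ where $f_0$ vanishes, giving (b).
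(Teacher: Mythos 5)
The paper states this lemma by citing \cite[Lemma 5.33]{BPRbook2} and provides no proof of its own. Your inductive argument --- removing a maximal-degree polynomial $f_0$ so that $\mathcal{F}'$ stays closed under differentiation, using $f_0' \in \mathcal{F}'$ together with the inductive hypothesis to conclude $\sigma'(f_0') \neq 0$ and hence strict monotonicity of $f_0$ on the open interval $\RR(\sigma')$, and then reading off (a)--(d) from this monotonicity --- is precisely the standard proof in the cited reference, and the case analysis you outline is correct and complete.
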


	\begin{remark}
		\label{rem:abuse}
		In what follows we will allow ourselves to use for $P \in \R[X_1,\ldots,X_k]$, 
		$\sign(P) = 0$ (resp. $\sign(P) =1$, $\sign(P) = -1$) in place of the atoms $P=0$ (resp. $P > 0$, $P < 0$)
		in formulas. 
		Similarly, we might write $\sign(P) \in \bar{\sigma}$, where $\bar{\sigma}$ is a weak sign condition
		in place of the corresponding weak inequality $P \geq 0$ or $P \leq 0$. It should be clear that this abuse of notation is harmless.
	\end{remark}

	In addition to the mathematical preliminaries described above, we also need two technical algorithmic results that we describe in the next section
	
	\subsection{Some preliminary algorithms}
	\label{subsec:algo-prelim}
	
	For technical reasons that will become clear when we describe Algorithm~\ref{alg:filtration}, we will need
	to convert efficiently a given quantifier-free formula defining a closed semi-algebraic set, into a closed formula
	defining the same semi-algebraic set. This is a non-trivial problem, since the standard quantifier-elimination
	algorithms in algorithmic semi-algebraic geometry does not guarantee that the output will be a closed
	formula even if it is known in advance that the semi-algebraic set that the formula is describing is closed.
	Luckily we only need to deal with formulas in one variable, where the problem is somewhat simpler. Note 
	that even in this case, it is not possible to obtain the description of the given closed semi-algebraic set as
	a closed formula by merely weakening the inequalities in the original formula.

	For example, consider the formula 
	$\phi(X) := (X^2(X-1) > 0) \wedge ((X \geq 2) \vee (X \leq 0))$. Then, $\RR(\phi) = [2,\infty)$
	is a closed semi-algebraic set, but the formula obtained by weakening the inequality $X^2(X-1) > 0$, namely
	\[
	\widetilde{\phi} := (X^2(X-1) \geq 0) \wedge ((X \geq 2) \vee (X \leq 0)),
	\]
	has as its realization the set $\{0\} \cup [2,\infty)$ which is strictly bigger than $\RR(\phi)$.
	
	Nevertheless, using Lemma~\ref{lem:Thom's Lemma}  we have the following algorithm to achieve the above
	mentioned task efficiently.
	
	\begin{algorithm}[H]
		\caption{(Make closed)}
		\label{alg:make-closed}
		\begin{algorithmic}[1]
			\INPUT
			\Statex{
				A quantifier-free formula $\theta(Y)$ with coefficients in $\D$, in one free variable $Y$, such that $\RR(\theta)$ is closed.
			}
			\OUTPUT
			\Statex{
				A 
				closed formula $\psi(Y)$ equivalent to $\theta(Y)$.
			}

			\PROCEDURE
			\State{Let $\theta(Y) = \bigvee_{1 \leq i \leq M} \bigwedge_{1 \leq j \leq N_i} (\sign(F_{i,j}) = \sigma_{i,j})$.}

			\For{each $(i,j)$ such that $\sigma_{i,j} \neq 0$}
			\State{Call Algorithm 13.1 (Computing realizable sign conditions) in \cite{BPRbook2} with input $\Der(F_{i,j})$, and obtain the set $\Sigma_{i,j}$ of realizable sign conditions of $\Der(F_{i,j})$.}
			\State{$\Sigma'_{i,j} \leftarrow \{\sigma \in \Sigma_F \mid \sigma(F_{i,j}) = \sigma_{i,j} \}$.}
			\State{$\overline{\Sigma_{i,j}} \leftarrow \{\bar{\sigma} \mid \sigma \in \Sigma'_{i,j}\}$.}
			\EndFor
			
			\State\Return{the formula 
				\[
				\psi(Y) =  \bigvee_{1 \leq i \leq M} (\bigwedge_{\sigma_{i,j} = 0} (\sign(F_{i,j}) = 0) \wedge \bigwedge_{\sigma_{i,j} \neq 0} \bigvee_{\bar{\sigma} \in \overline{\Sigma_{i,j}}}  (\sign(F_{i,j}) \in \bar{\sigma}).
				\] 
			}
			\COMPLEXITY
			The complexity of the algorithm is 
			bounded by 
			$(s d)^{O(1)}$ where $s$ is the number of polynomials appearing $\theta$ and $d$ a bound on their degrees.
		\end{algorithmic}
	\end{algorithm} 
	
	\begin{proof}[Proof of correctness]
		The correctness of the algorithm follows from the correctness of Algorithm 13.1 (Computing realizable sign conditions) in \cite{BPRbook2}, and Lemma~\ref{lem:Thom's Lemma}.
	\end{proof}
	
	\begin{proof}[Complexity analysis]
		The complexity bound follows from the complexity of Algorithm 13.1 (Computing realizable sign conditions) in \cite{BPRbook2}.
	\end{proof}
	
	We will also need an algorithm that takes as input a finite set of polynomials $\mathcal{G}$ in one variable with
	coefficients in $\D[\bar\eps]$, and outputs a set of Thom encodings whose set of associated 
	points $\{s_0,\ldots,s_M\}$
	satisfy the property stated in Lemma~\ref{lem:removal}.

	\begin{algorithm}[H]
		\caption{(Removal of infinitesimals)}
		\label{alg:removal}
		\begin{algorithmic}[1]
			\INPUT
			\Statex{
				A  finite set $\mathcal{G} \subset \D[\bar\eps][T]$ such that each $P \in \mathcal{G}$ depends on at most
				$k+1$ of the $\eps_i$'s. 
			}
			
			\OUTPUT
			\Statex{
				
				A finite set of Thom encodings $\mathcal{F} = \{(f_i,\sigma_i) \mid 0 \leq i \leq N \}$, with $f_i \in \D[T]$
				with associated points $s_0 < \cdots < s_M$, such that 
				letting $s_{-1} = -\infty, s_{M+1} = \infty$,
				for each $i, 0 \leq i < M$, 
				there exists $j, 0 \leq j < N$, such that 
				$(s_i,s_{i+1}) \subset \R$ is contained in $(t_j,t_{j+1}) \cap \R$,
				where $\{t_0,\ldots,t_N\} = \bigcup_{G \in \mathcal{G}} \ZZ(G,\R\la\bar\eps\ra)$, with $t_0 < \cdots < t_N$.
			}

			\algstore{myalg}
		\end{algorithmic}
	\end{algorithm}
	
	\begin{algorithm}[H]
		\begin{algorithmic}[1]
			\algrestore{myalg}

			\PROCEDURE
			\For {$G \in \mathcal{G}$}
			\State{ $0 \leq i_0 < \cdots < i_h \leq s+1$ be such that $G \in D[\eps_{i_0},\ldots, \eps_{i_h}][T]$.}
			\State{ Write $G = \sum_\alpha m_{G,\alpha}(\eps_{i_0},\ldots, \eps_{i_h})  G_\alpha$, with $G_\alpha \in\D[T], m_{G,\alpha} \in D[\eps_{i_0},\ldots, \eps_{i_h}]$.}
			\State{Let $M(G) = \{\alpha \mid m_{G,\alpha} \neq 0 \}$.}
			\EndFor
			
			\State{Let $\mathcal{H} = \bigcup_{G \in \mathcal{G}, \alpha \in M(G)}  \{G_\alpha\}$.}
			
			\State{ Use Algorithm 10.17 from \cite{BPRbook2} with $\mathcal{H}$ as input to obtain an 
				ordered list of Thom encodings $\mathcal{F}$.}

			\State\Return{$\mathcal{F}$.}

			\COMPLEXITY
			The complexity of the algorithm is 
			bounded by $s D^{O(k)}$, where $s = \card(\mathcal{G})$ and $D$ is a bound on the degrees of the
			polynomials in $\mathcal{G}$ in $\bar\eps$ and in $T$.
		\end{algorithmic}
	\end{algorithm}
	
	\begin{proof}[Proof of correctness]
		The correctness of the algorithm follows from Lemma~\ref{lem:removal} and the correctness of 
		Algorithm 10.17 from \cite{BPRbook2}.
	\end{proof}
	
	\begin{proof}[Complexity analysis]
		The complexity bound follows from the complexity bound of Algorithm 10.17 from \cite{BPRbook2}.
	\end{proof}

\subsection{Algorithm for computing simplicial replacement}
\label{subsec:simplicial-replacement}
We recall the following definition from \cite{basu-karisani}.

\begin{notation} [Diagram of various unions of a finite number of subspaces]
\label{not:diagram-Delta}
Let $J$ be a finite set, $A$ a topological space, 
and $\mathcal{A} = (A_j)_{j \in J}$ a tuple of subspaces of $A$  indexed by $J$.

For any subset 
$J' \subset J$,
we denote 
\begin{eqnarray*}
\mathcal{A}^{J'} &=& \bigcup_{j' \in J'} A_{j'}, \\
\mathcal{A}_{J'} &=& \bigcap_{j' \in J'} A_{j'}, \\
\end{eqnarray*}

We consider $2^J$ as a category whose objects are elements of $2^J$, and whose only morphisms 
are given by: 
\begin{eqnarray*}
2^J(J',J'') &=& \emptyset  \mbox{ if  } J' \not\subset J'', \\
2^J(J',J'') &=& \{\iota_{J',J''}\} \mbox{  if } J' \subset J''.
\end{eqnarray*} 
We denote by $\Simp^J(\mathcal{A}):2^J \rightarrow \Top$ the functor (or the diagram) defined by
\[
\Simp^J(\mathcal{A})(J') = \mathcal{A}^{J'}, J' \in 2^J,
\]
and
$\Simp^J(\mathcal{A})(\iota_{J',J''})$ is the inclusion map $\mathcal{A}^{J'} \hookrightarrow \mathcal{A}^{J''}$.
\end{notation}

We will use an algorithm whose existence is proved in \cite[Theorem 1]{basu-karisani},
and which we will refer to as \emph{Algorithm for computing simplicial replacement},
that given a tuple of closed-formulas 
$\Phi = (\phi_0,\ldots,\phi_N)$, $R >0$,  and $\ell \geq 0$,
produces as output 
a simplicial complex $K$
and subcomplexes $K_i, 0\leq i \leq N$ of $K$, such that 
the diagram
\[
\Simp^{[N]}\left((\RR(\phi_i,\overline{B_k(0,R)}))_{i \in [N]}\right) 
\]
is homologically $\ell$-equivalent (\cite[Section 2.1.1]{basu-karisani}) 
to the diagram
\[
\Simp^{[N]}\left((|K_i|)_{i \in [N]}\right)
\]
(where $|K_i| \subset |K|$ is the geometric realization of $K_i$ and
$[N] = \{0,\ldots,N\}$).

We refer the reader to \cite{basu-karisani} for the details.

The complexity of this algorithm, as well as the size of the output
 simplicial complex $\Delta$, 
are bounded by 
\[
 (N s d)^{k^{O(m)}},
\]
where $s = \card(\mathcal{P})$, and $d = \max_{P \in \mathcal{P}} \deg(P)$.

	\subsection{Algorithm for reducing to a finite filtration}
	\label{subsec:algo-finite}
	We are now in a position to describe our algorithm for reducing the problem of 
	computing the barcode of 
	a filtration of a semi-algebraic set $S$ by the sub-level sets of 
	a polynomial $P$, to the problem of computing the barcode of a finite filtration.
	
	Algorithm~\ref{alg:filtration} computes a finite subset of $\R$, as Thom encodings (cf. Definition~\ref{10:def:thom}), such that it includes
	the values of $P$ at which the homotopy type of the sub-level sets of $S$ changes. The algorithm has singly exponentially bounded complexity.

	\begin{algorithm}[H]
		\caption{(Reducing to a finite filtration)}
		\label{alg:filtration}
		\begin{algorithmic}[1]
			\INPUT
			\Statex{
				\begin{enumerate}[(a)]
					\item $\ell \in \Z_{\geq 0}$.
					\item
					$R \in \D, R >0$.
					\item
					A finite set $\mathcal{P} = \{P_1,\ldots,P_s \} \subset \D[X_1,\ldots,X_k]$.
					\item
					A $\mathcal{P}$-closed formula $\phi$.
					\item
					A polynomial $P \in \R[X_1,\ldots,X_k]$.
				\end{enumerate}
			}

			\OUTPUT
			\Statex{
				\begin{enumerate}[(a)]
					\item
					A finite set of Thom encodings $\mathcal{F} = \{(f_i,\sigma_i) \mid 0 \leq i \leq N \}$, with $f_i \in \D[T]$
					with associated points $t_0 < \cdots < t_N$, such that  for $t \in \R$, denoting by
					$S_{t}= \RR(\phi) \cap \overline{B_k(0,R)} \cap \{x \mid P(x) \leq t\}$,  
					for each $i, 0 \leq i \leq N-1$, and all $t \in [t_i, t_{i+1})$ the inclusion maps $S_{t_i}\hookrightarrow S_{t}$ are homological
					equivalences.
					\item
					A filtration of finite simplicial complexes 
					\[
					K_0 \subset K_1 \subset \cdots \subset K_N
					\] 
					such that 
					$\Simp^{[N]}(S_{t_0},\ldots,S_{t_N})$ is homologically $\ell$-equivalent to $\Simp^{[N]}(|K_0|,\ldots,|K_N|)$.

				\end{enumerate}
			}

			\PROCEDURE
			\State{
				$P_0 \leftarrow \sum_{i=1}^k X_i^2 - R$.
			}
			\State
			{$P_{s+1}  \leftarrow P - Y$.
			}

			\State{
				\[
				\mathcal{P}^\star(\bar{\eps}) \leftarrow  \bigcup_{0 \leq i \leq s+1} \{ P_i +\eps_i, P_i - \eps_i \}.
				\]
			}

			\State{
				Denote by
				$\phi^\star(\bar{\eps})$, the $\mathcal{P}^\star(\bar{\eps})$-closed formula
				obtained by replacing each occurrence of $P_i \geq 0$ in $\phi$ by $P_i + \eps_i \geq 0$ 
				(resp. $P_i \leq 0$ in $\phi$ by $P_i - \eps_i \leq 0$) for $0 \leq i \leq s+1$.
			}
			
			\algstore{myalg}
		\end{algorithmic}
	\end{algorithm}
	
	\begin{algorithm}[H]
		\begin{algorithmic}[1]
			\algrestore{myalg}

			\For {$\mathcal{Q} \subset \mathcal{P}^\star(\eps), \card(\mathcal{Q}) \leq k$}
			\State{
				\[Jac(\mathcal{Q}) \leftarrow  \sum_{1 \leq i_1 < i_2 < \cdots < i_{\card(\mathcal{Q}')} \leq k} \det\left(
				\left( \frac{\partial Q}{\partial X_{i_j}} \right)_{Q \in \mathcal{Q},
					1 \leq i \leq k}\right)
				\]
			}
			\EndFor

			\For {$\mathcal{Q} \subset \mathcal{P}^\star(\eps), \card(\mathcal{Q}) = k+1$}
			\State{
				\[
				\Sigma(\mathcal{Q}') \leftarrow  \sum_{Q \in \mathcal{Q}} Q^2.
				\]
			}
			\EndFor

			\State{
				\[
				\mathcal{H} \leftarrow \{Jac(\mathcal{Q}) \mid \mathcal{Q} \subset \mathcal{P}^\star(\eps), \card(\mathcal{Q}) \leq k\}
				\cup
				\{\Sigma(\mathcal{Q}) \mid \mathcal{Q} \subset \mathcal{P}^\star(\eps), \card(\mathcal{Q}) = k+1\}.
				\]
			} 
			\State{Call Algorithm 14.1 (Block Elimination) from \cite{BPRbook2} with the block of variables $(X_1,\ldots,X_k)$
				and $\mathcal{H}$ as input, and obtain $\mathcal{G} = \mathrm{BElim}_X(\mathcal{F})$ (following the same  notation as  in   \cite[Algorithm 14.1 (Block Elimination)]{BPRbook2}).
			}

			\State{Call Algorithm~\ref{alg:removal} with $\mathcal{G}$ as input and obtain an ordered list
				of Thom encodings $\mathcal{F} = ((f_0,\sigma_0),\ldots,(f_N,\sigma_N))$.}

			\For {$0 \leq i \leq N$}
			\State{ Call Algorithm 14.5 (Quantifier Elimination) \cite{BPRbook2} with input the formula 
				\[
				\widetilde{\psi}(Y) := \forall Z ((f_i(Z) = 0) \wedge (\sign(\Der(f_i))(Z) = \sigma_i)) \Rightarrow (Y \leq Z)
				\] 
				to obtain an equivalent  quantifier-free formula $\widetilde{\psi_i}(Y)$.}

			\State{Call Algorithm~\ref{alg:make-closed} with $\widetilde{\psi_i}(Y)$ as input to obtain a closed formula $\psi_i(Y)$.}
			\State{ $\phi_i \leftarrow \widetilde{\phi} \wedge  \psi_i(Y)$.}
			\State{ $\mathcal{Q}_i \leftarrow \mbox{ the set of polynomials appearing in $\psi_i$}$.}
			\EndFor
			
			\State{Call Algorithm for simplicial replacement with input: 
			the closed formulas $\phi_0,\ldots,\phi_N$, $R$ and $\ell$, 
			and output the simplicial complexes $K_i, 0 \leq i \leq N$.
			}
			
			\COMPLEXITY
			The complexity of the algorithm is bounded by $(s d)^{k^{O(\ell)}}$, where $s = \card(\mathcal{P})$, and
			$d = \max_{P \in \mathcal{P}} \deg(P)$.
			
		\end{algorithmic}
	\end{algorithm}
	\begin{proof}[Proof of correctness]
		The correctness of the algorithm follows from Proposition~\ref{prop:G2}, and the correctness of the following
		algorithms: 
		Algorithm 14.1 (Block Elimination) in \cite{BPRbook2}), Algorithm~\ref{alg:removal},
		Algorithm 14.5 (Quantifier Elimination) in \cite{BPRbook2},  Algorithm~\ref{alg:make-closed}, and
		the Algorithm for simplicial replacement \cite[Theorem 1]{basu-karisani}.
	\end{proof}
	
	\begin{proof}[Complexity analysis]
		The complexity bound follows from the complexity bounds of 
		Algorithm 14.1 (Block Elimination) in \cite{BPRbook2}), Algorithm~\ref{alg:removal},
		Algorithm 14.5 (Quantifier Elimination) in \cite{BPRbook2},  Algorithm~\ref{alg:make-closed}, and
		the Algorithm for simplicial replacement
		\cite[Theorem 1]{basu-karisani}.
	\end{proof}
	
	\subsection{Computing barcodes of semi-algebraic filtrations}
	\label{subsec:algo-sa}
	We can now describe our algorithm for computing the barcode of the filtration of a
	semi-algebraic set by the sub-level sets of a polynomial. 
	First we need an algorithm for computing barcodes of finite filtrations of finite simplicial complexes.

	\begin{algorithm}[H]
		\caption{(Barcode of a finite filtration of finite simplicial complexes)}
		\label{alg:barcode-simplicial}
		\begin{algorithmic}[1]
			\INPUT
			\Statex{
				\begin{enumerate}[1.]
					\item
					$\ell  \in \Z_{\geq 0}$.
					\item
					A finite filtration $\mathcal{F}$, 
					$K_0 \subset \cdots \subset K_N$ 
					of finite simplicial complexes.
				\end{enumerate}
			}
			\OUTPUT
			\Statex{
				$\mathcal{B}_p(\mathcal{F}), 0 \leq p \leq \ell$.
			}

			\PROCEDURE
			\State{ $K_{-1} \leftarrow \emptyset$.}
			\State{ $K_{N+1} \leftarrow K_N$.}
			\For {$-1 \leq i \leq j \leq N+1$}
			\State{Use Gaussian elimination  to compute the persistent Betti numbers $b_p^{i,j}(\mathcal{F})$.}
			\EndFor

		\For{$0 \leq p \leq \ell, 0 \leq i \leq j \leq N+1$}
			\State{ 
				\If{$j = N+1$} 	
				\[	\mu_p^{i,j}  \leftarrow 	b^{i, j}_p
				(\mathcal{F}) - b^{{i-1},{j}}_p (\mathcal{F})
				\]
				\Else
				\[
				\mu_p^{i,j}  \leftarrow  (b^{i, {j-1}}_p (\mathcal{F}) - b^{i, j}_p
				(\mathcal{F})) - (b^{{i-1},{j-1}}_p (\mathcal{F}) - b^{{i-1}, j}_p(\mathcal{F})) \]
				\EndIf					
			}
			
			(cf. Eqn.~\eqref{eqn:prop:finite:multiplicity}).
			\EndFor

			\For {$0 \leq p \leq \ell$} 
			\State{Output
		    $$\displaylines{
			\mathcal{B}_p(\mathcal{F}) = \{(i,j,\mu^{i,j}_p) \;\mid\; 0 \leq i \leq j \leq N, \mu^{i,j}_p > 0\} \cup \cr
			\{(i,\infty,\mu^{i,j}_p) \;\mid\; 0 \leq i \leq j = N+1, \mu^{i,j}_p > 0\}.
			}
			$$
			}
			\EndFor
			\COMPLEXITY
			The complexity of the algorithm is bounded polynomially in $N$ times the number of simplices appearing in the complex $K_N$. 
		\end{algorithmic}
	\end{algorithm}
	
	\begin{proof}[Proof of correctness]
		The correctness of the algorithm follows from  Eqn.~\eqref{eqn:prop:finite:multiplicity}.
	\end{proof}
	
	\begin{proof}[Complexity analysis]
		The complexity of the algorithm follows from the complexity of Gaussian elimination.
	\end{proof}

	\begin{algorithm}[H]
		\caption{(Computing  persistent homology barcodes of semi-algebraic sets)}
		\label{alg:barcode-semi-algebraic}
		\begin{algorithmic}[1]
			\INPUT
			\Statex{
				\begin{enumerate}
					\item
					A $\mathcal{P}$-closed formula $\phi$, with $\mathcal{P}$ a finite subset of  $\D[X_1,\ldots,X_k]$,
					such that $\RR(\phi,\R^k)$ is bounded.
					\item
					A polynomial $P \in \D[X_1,\ldots,X_k]$. 
					\item 
					$\ell, 0 \leq \ell \leq k$.
				\end{enumerate}
			}
			\OUTPUT
			\Statex{
				For each $p, 0 \leq p \leq \ell$, $\mathcal{B}_p(S,P)$,
				where $S = \RR(\phi) 
				$.
			}

			\algstore{myalg}
		\end{algorithmic}
	\end{algorithm}
	
	\begin{algorithm}[H]
		\begin{algorithmic}[1]
			\algrestore{myalg}

			\PROCEDURE
			\State{$\mathcal{P}' \leftarrow  \mathcal{P} \cup \{\eps(X_1^2+\cdots+X_k^2) -1\}$.}
			\State{$\phi' \leftarrow \phi \wedge \eps^2(X_1^2+\cdots+X_k^2)- 1 \leq 0)$.}

			\State{$\R \leftarrow \R\la\eps\ra$, $\D \leftarrow \D[\eps]$.}
			\State{Call Algorithm~\ref{alg:filtration} with input $\ell, 1/\eps, \mathcal{P}',\phi', P$,
				to obtain a finite ordered set
				of Thom encodings $(f_0,\sigma_0),\ldots, (f_N,\sigma_N)$, and a finite filtration $\mathcal{F} = (K_0 \subset \cdots \subset K_N)$, where $K_N$ is a finite simplicial complex.}

			\State{Call Algorithm~\ref{alg:barcode-simplicial} with input $\ell$ and the finite filtration 
			$\mathcal{F}$, 
				and output for each $p, 0 \leq p \leq \ell$, 
				$\mathcal{B}_p(\mathcal{F})$.}

			\State {{\bf for each} $p, 0\leq p \leq \ell$
							
				Output
				$$\displaylines{
					\mathcal{B}_p(S,P) = \bigcup_{(i,j,\mu) \in \mathcal{B}_p(\mathcal{F}),0 \leq i \leq j \leq N}
					\{((f_i,\sigma_i),(f_j,\sigma_j),\mu)\}
					\cup \cr
					\bigcup_{(i,\infty,\mu) \in \mathcal{B}_p(\mathcal{F})}\{((f_i,\sigma_i),\infty,\mu)\}.
				}
				$$
			}

			\COMPLEXITY
			The complexity of the algorithm is bounded by $(s d)^{k^{O(\ell)}}$, where $s = \card(\mathcal{P})$, and
			$d = \max_{Q \in \mathcal{P} \cup \{P\}} \deg(Q)$.
		\end{algorithmic}
	\end{algorithm}
	
	\begin{proof}[Proof of correctness]
		The correctness of the algorithm follows from the correctness of Algorithms~\ref{alg:filtration} and 
		\ref{alg:barcode-simplicial}.
	\end{proof}
	
	\begin{proof}[Complexity analysis]
		The complexity bound follows from the complexity bounds of Algorithms~\ref{alg:filtration} and 
		\ref{alg:barcode-simplicial}.
	\end{proof}
	
	\begin{proof}[Proof of Theorem~\ref{thm:persistent}]
		The theorem follows from the correctness and the complexity analysis of Algorithm~\ref{alg:barcode-semi-algebraic}.
	\end{proof}

	\section{Future work and open problems}
	\label{sec:conclusion}
	We conclude by stating some open problems and possible future directions of research in this area.
	\begin{enumerate}[1.]
	    \item
	    It would be very interesting (and challenging) to obtain an algorithm with singly exponential complexity that computes the entire barcode of a semi-algebraic filtration, and not restricted to dimension up to $\ell$. This would imply also an algorithm with singly exponential complexity for computing all the Betti numbers of a given semi-algebraic set, which is a challenging problem on its own \cite{Basu-survey}. 
	    \item 
	    Another open problem is to extend Algorithm~\ref{alg:barcode-semi-algebraic} to the case of non-proper semi-algebraic maps using the proposed definition of barcodes for non-proper semi-algebraic maps (see  Definition~\ref{def:tilde-B}).
		\item
		One very active topic in the area of persistent homology is the theory of multi-dimensional
		persistent homology \cite{Oudot}. In our setting this would imply studying the sub-level sets of two or more real polynomial functions simultaneously. While the so called persistence modules and associated
		barcodes can be defined analogously to the one-dimensional situation (see for example \cite{Oudot}), an analog of Proposition~\ref{prop:finite} is missing. It is thus an open problem to give an algorithm with singly exponential complexity to compute the barcodes of ``higher dimensional'' semi-algebraic filtrations.
	\end{enumerate} 
	
\section*{Acknowledgements}
The authors are grateful to Ezra Miller for his comments on a previous version of this paper and for pointing out several related prior works.

	\bibliographystyle{amsplain}
	\bibliography{master}
\end{document}